\begin{document}

\title{\MakeUppercase{\qadjnstitle}}
\date{\today}
\author{\textsc{Aaron Mazel-Gee}}

\begin{abstract}
We prove that various structures on model $\infty$-categories descend to corresponding structures on their localizations: (i) Quillen adjunctions; (ii) two-variable Quillen adjunctions; (iii) monoidal and symmetric monoidal model structures; and (iv) enriched model structures. 
\end{abstract}

\maketitle

\papernum{5}

\setcounter{tocdepth}{1}
\tableofcontents

\setcounter{section}{-1}

\section{Introduction}

\subsection{Presenting structures on localizations of model $\infty$-categories}

\boilerplateintro{\M} In \cite{MIC-sspaces}, we introduced the notion of a \bit{model structure} extending the data of a relative $\infty$-category: just as in Quillen's classical theory of model structures on relative categories, this allows for much more control over manipulations within its localization.\footnote{For the precise definition a model $\infty$-category, we refer the reader to \cite[\sec 1]{MIC-sspaces}.  However, for the present discussion, it suffices to observe that it is simply a direct generalization of the standard definition of a model category.}  For instance, in \cite{MIC-fundthm} we prove that a model structure provides an efficient and computable way of accessing the hom-spaces $\hom_{\loc{\M}{\bW}}(x,y)$.

However, we are not just interested in localizations of relative $\infty$-categories themselves.  For example, adjunctions are an extremely useful structure, and we would therefore like a systematic way of presenting an adjunction on localizations via some structure on overlying relative $\infty$-categories.  The purpose of this paper is to show that model structures on relative $\infty$-categories are not only useful for computations \textit{within} their localizations, but are in fact also useful for presenting structures \textit{on} their localizations.  More precisely, we prove the following sequence of results.\footnote{The precise definitions of Quillen adjunctions and Quillen equivalences are also contained in \cite[\sec 1]{MIC-sspaces}, while the remaining relevant definitions are contained in the body of the present paper.  However, for the present discussion, it likewise suffices to observe that they are all direct generalizations of their classical counterparts.}

\begin{thm*}[\ref{adjunction thm} and \ref{cor quillen equivce}]
A \bit{Quillen adjunction} between model $\infty$-categories induces a canonical adjunction on their localizations.  If this is moreover a \bit{Quillen equivalence}, then the resulting adjunction is an adjoint equivalence.
\end{thm*}

\begin{thm*}[\ref{two-var adjunction thm}]
A \bit{two-variable Quillen adjunction} between model $\infty$-categories induces a canonical two-variable adjunction on their localizations.
\end{thm*}

\begin{thm*}[\ref{fund thm of monoidal model infty-cats} and \ref{fund thm of symm monoidal model infty-cats}]
The localization of a (resp.\! \bit{symmetric}) \bit{monoidal model $\infty$-category} is canonically a closed (resp.\! symmetric) monoidal $\infty$-category.
\end{thm*}

\begin{thm*}[\ref{fund thm of enriched model infty-cats}]
The localization of an \bit{enriched model $\infty$-category} is canonically enriched and bitensored over the localization of the enriching model $\infty$-category.
\end{thm*}
\noindent Along the way, we also develop the foundations of the theory of \bit{homotopy co/limits} in model $\infty$-categories.


\begin{rem}\label{intro rmk subcats of good objs}
Perhaps surprisingly, none of these results depends on the concrete identification of the hom-spaces $\hom_{\loc{\M}{\bW}}(x,y)$ in the localizations of model $\infty$-categories provided in \cite{MIC-fundthm}.  Rather, their proofs all rely on considerations involving subcategories of ``nice'' objects relative to the given structure, for instance the subcategory of cofibrant objects relative to a left Quillen functor.  Such considerations are thus somewhat akin to the theory of ``deformable'' functors of Dwyer--Hirschhorn--Kan--Smith (see \cite{DHKS}, as well as Shulman's excellent synthesis and contextualization \cite{Shulman-hocolims}), but the philosophy can be traced back at least as far as Brown's ``categories of fibrant objects'' (see \cite{BrownKSthesis}).
\end{rem}

\begin{rem}
In the special case of model categories and their \textit{1-categorical} localizations, these results are all quite classical (and fairly easy to prove).\footnote{Given a relative category $(\R,\bW)$, its 1-categorical localization and its $\infty$-categorical localization are closely related: there is a natural functor $\loc{\R}{\bW} \ra \ho(\loc{\R}{\bW}) \simeq \R[\bW^{-1}]$ between them, namely the projection to the homotopy category (see \cite[Remark 1.29]{MIC-rnerves}).  Moreover, all of these structures -- adjunctions, two-variable adjunctions, closed (symmetric) monoidal structures, and enrichments and bitensorings -- descend canonically from $\infty$-categories to their homotopy categories.}  However, the study of \textit{$\infty$-categorical} localizations -- even just of model categories -- is much more subtle, because it requires keeping track of a wealth of coherence data.

The following specializations of our results to model 1-categories (and their $\infty$-categorical localizations) appear in the literature.
\begin{itemize}
\item We proved this special case of the first of the results (regarding Quillen adjunctions) listed above as \cite[\cref{adjns:main theorem}]{adjns}.  (For a detailed history of partial results in this direction, we refer the reader to \cite[\cref{adjns:section history}]{adjns}.)
\item Under a more restrictive definition of a (resp.\! symmetric) monoidal model category in which the unit object is required to be cofibrant (as opposed to unit axiom {\monunitaxiom} of \cref{defn of monoidal model infty-cat}), Lurie proves that its localization admits a canonical (resp.\! symmetric) monoidal structure in \cite[\sec 4.3.1]{LurieHA} (see particularly \cite[Proposition 4.1.3.4]{LurieHA}).  Moreover, under an analogously more restrictive definition of a (resp.\! symmetric) monoidal model $\infty$-category, a canonical (resp.\! symmetric) monoidal structure on its localization likewise follows from this same result.  (See \cref{lurie proves mon or symm mon str for cofibt unit}.)
\end{itemize}
Aside from these, the results of this paper appear to be new, even in the special case of model 1-categories.
\end{rem}

\begin{rem}
Our result \cite[\cref{adjns:main theorem}]{adjns} is founded in point-set considerations, for instance making reference to an explicit ``underlying quasicategory'' functor from relative categories (e.g.\! model categories).  By contrast, the proof of the generalization given here works invariantly, and relies on a crucial result of Gepner--Haugseng--Nikolaus identifying cocartesian fibrations as lax colimits, which appeared almost concurrently to our \cite{adjns}.  (Specifically, the proof of our ``fiberwise localization'' result \cref{fiberwise localization} appeals multiple times to \cite[Theorem 7.4]{GHN}.)  Nevertheless, we hope that our model-specific proof will still carry some value: the techniques used therein seem fairly broadly applicable, and its point-set nature may someday prove useful as well.
\end{rem}

\subsection{Conventions}

\refMIC

\tableofcodenames

\examplecodename

\citelurie \ \luriecodenames

\butinvt \ \seeappendix

\subsection{Outline}

We now provide a more detailed outline of the contents of this paper.

\begin{itemize}

\item In \cref{section qadjns and ho-co-lims and reedy}, we begin by stating our results concerning Quillen adjunctions and Quillen equivalences (\cref{adjunction thm} and \cref{cor quillen equivce}, resp.).  We then develop the rudiments of the theory of homotopy co/limits in model $\infty$-categories, and provide a detailed study of Reedy model structures on functor $\infty$-categories.

\item In \cref{section rel co-cart-fibns and bicart fibns}, we provide some auxiliary material on \textit{relative co/cartesian fibrations} and on \textit{bicartesian fibrations}.  These two enhancements of the theory of co/cartesian fibrations are used in the proofs of the main results of the paper.

\item In \cref{section proofs of qadjns and qeqs}, we prove \cref{adjunction thm} and \cref{cor quillen equivce}.

\item In \cref{section two-var qadjns}, we show that two-variable Quillen adjunctions between model $\infty$-categories present two-variable adjunctions between their localizations.

\item In \cref{section mon and symm mon model infty-cats}, we show that (resp.\! symmetric) monoidal model $\infty$-categories present closed (resp.\! symmetric) monoidal $\infty$-category.

\item In \cref{section enr model infty-cats}, we show that enriched model $\infty$-categories present enriched and bitensored $\infty$-categories.

\end{itemize}

\subsection{Acknowledgments}

We would like to thank Geoffroy Horel and Zhen Lin Low for their helpful comments, as well as the NSF graduate research fellowship program (grant DGE-1106400) for financial support during the time that this work was carried out.

\section{Quillen adjunctions, homotopy co/limits, and Reedy model structures}\label{section qadjns and ho-co-lims and reedy}

Model structures on relative (1- and $\infty$-)categories are extremely useful for making computations \textit{within} their localizations.  However, it can also be quite useful to obtain relationships \textit{between} their localizations.  Perhaps the most important relationship that two $\infty$-categories can share is that of being related by an \textit{adjunction}.  The central result of this section (\cref{adjunction thm}) provides a systematic way of obtaining just such a relationship: a \textit{Quillen adjunction} between model $\infty$-categories induces a canonical \textit{derived adjunction} on their localizations.  As a special case (\cref{cor quillen equivce}), a \textit{Quillen equivalence} induces a \textit{derived equivalence} on localizations.\footnote{Quillen adjunctions and Quillen equivalences are respectively given as Definitions \Cref{sspaces:define quillen adjunction} \and \Cref{sspaces:define quillen equivalence}.  These are completely straightforward generalizations of the model 1-categorical counterparts, and so we do not feel the need to repeat them here.}

This section is organized as follows.  In \cref{subsection qadjns and qeqs}, we state these fundamental theorems regarding Quillen adjunctions and Quillen equivalences.  (However, their proofs will be postponed to \cref{section proofs of qadjns and qeqs}, after we have developed some necessary scaffolding in \cref{section rel co-cart-fibns and bicart fibns}.)  Then, in \cref{subsection ho-co-lims} we study the important special case of \textit{homotopy co/limits}, briefly introducing the \textit{projective} and \textit{injective} model structures.  Finally, in \cref{subsection reedy model str}, we pursue a more in-depth study of the \textit{Reedy} model structure.

\subsection{Quillen adjunctions and Quillen equivalences}\label{subsection qadjns and qeqs}

The classical theory of \textit{derived functors} arose out of a desire to ``correct'' functors between relative categories which do not respect weak equivalences to ones that do.  There, one replaces a given object by a suitable \textit{resolution} -- the nature of which depends both on the context and on the sort of functor which one is attempting to correct -- and then applies the original functor to this resolution, the point being that the functor \textit{does} respect weak equivalences between such ``nice'' objects.

A Quillen adjunction
\[ F : \M \adjarr \N : G \]
between model (1- or $\infty$-)categories is a prototypical and beautifully symmetric example of such a situation.  In general, neither Quillen adjoint will preserve weak equivalences.  However, in this case there are canonical choices for such subcategories of ``nice'' objects: left Quillen functors preserve weak equivalences between cofibrant objects, while right Quillen functors preserve weak equivalences between fibrant objects (see Kenny Brown's lemma (\ref{kenny brown})).  Moreover, the inclusions $(\M^c,\bW_\M^c) \hookra (\M,\bW_\M)$ and $(\N,\bW_\N) \hookla (\N^f,\bW_\N^f)$ induce equivalences
\[ \loc{\M^c}{(\bW_\M^c)} \xra{\sim} \loc{\M}{\bW_\M} \]
and
\[ \loc{\N}{\bW_\N} \xla{\sim} \loc{\N^f}{(\bW_\N^f)} \]
on localizations (see \cref{inclusion of fibts induces equivce on gpd-compln}).  A perfect storm then ensues.

\begin{thm}\label{adjunction thm}
A Quillen adjunction
\[ F : \M \adjarr \N : G \]
of model $\infty$-categories induces a canonical adjunction
\[ \bbL F : \loc{\M}{\bW_\M} \adjarr \loc{\N}{\bW_\N} : \bbR G \]
on localizations, whose left and right adjoints are respectively obtained by applying the localization functor $\RelCati \xra{\locL} \Cati$ to the composites
\[ \M^c \hookra \M \xra{F} \N \]
and
\[ \M \xla{G} \N \hookla \N^f . \]
\end{thm}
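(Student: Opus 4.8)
The plan is to construct the derived adjunction $\bbL F \dashv \bbR G$ via a zig-zag of equivalences and adjunctions at the level of localizations, using the subcategories of cofibrant and fibrant objects as intermediaries. First I would form the localization functor $\locL \colon \RelCati \to \Cati$ applied to the relative functor $\M^c \hookra \M \xra{F} \N$; since $F$ is left Quillen, Ken Brown's lemma (\cref{kenny brown}) guarantees that this composite sends $\bW_\M^c$ into $\bW_\N$, so it is a genuine relative functor and $\bbL F$ is well-defined as a functor $\loc{(\M^c)}{(\bW_\M^c)} \to \loc{\N}{\bW_\N}$; precomposing with the inverse of the equivalence $\loc{(\M^c)}{(\bW_\M^c)} \xra{\sim} \loc{\M}{\bW_\M}$ (from \cref{inclusion of fibts induces equivce on gpd-compln}, applied on the cofibrant side) upgrades this to a functor out of $\loc{\M}{\bW_\M}$. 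Dually, $\bbR G$ is obtained from $\M \xla{G} \N \hookla \N^f$, which lands in $\bW_\M$ because $G$ is right Quillen, together with the equivalence $\loc{\N}{\bW_\N} \xla{\sim} \loc{(\N^f)}{(\bW_\N^f)}$.

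The heart of the argument is producing the unit/counit data exhibiting $\bbL F \dashv \bbR G$, and for this I would work on the full subcategory $\M^{cf} \subseteq \M$ of objects that are both cofibrant and fibrant (and likewise $\N^{cf}$), where \emph{both} derived functors are modeled by the honest point-set functors with no resolution needed. Concretely, I would use fibrant replacement in $\M$ and cofibrant replacement in $\N$ to build natural weak equivalences relating, on the one hand, the composite $G \circ (\text{fibrant repl}) \circ F$ restricted to $\M^c$, and on the other, a cofibrant-replacement-then-unit-of-the-Quillen-adjunction construction; the classical derived adjunction argument for model $1$-categories proceeds exactly this way, and the key inputs — that left Quillen functors preserve cofibrations and trivial cofibrations, that the adjunction isomorphism $\hom_\N(Fx,y) \cong \hom_\M(x,Gy)$ is natural, and that weak equivalences between bifibrant objects are detected after localization — all generalize verbatim. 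The cleanest packaging is probably to exhibit, for $x \in \M^{cf}$ and $y \in \N^{cf}$, a natural equivalence of mapping spaces $\hom_{\loc{\N}{\bW_\N}}(\bbL F x, y) \simeq \hom_{\loc{\M}{\bW_\M}}(x, \bbR G y)$ coming from the point-set adjunction, and then to observe that since $\M^{cf} \hookra \M$ and $\N^{cf} \hookra \N$ induce equivalences on localizations (two applications of \cref{inclusion of fibts induces equivce on gpd-compln}), this mapping-space equivalence propagates to all of $\loc{\M}{\bW_\M}$ and $\loc{\N}{\bW_\N}$.

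I expect the main obstacle to be \emph{coherence}: in the $\infty$-categorical setting it is not enough to produce a natural isomorphism of hom-\emph{sets} as in the $1$-categorical case, nor even a levelwise equivalence of hom-spaces — one must produce it compatibly with composition, i.e.\ as data rather than as a property, and ultimately exhibit a genuine adjunction in $\Cati$ rather than merely a hom-space equivalence for each pair of objects. The honest way to handle this is not to manipulate mapping spaces directly but to build the adjunction as a single bicartesian fibration over $\Delta^1$ whose two fibers are $\loc{\M}{\bW_\M}$ and $\loc{\N}{\bW_\N}$; this is presumably why the paper defers the proof until after \cref{section rel co-cart-fibns and bicart fibns}. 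I would therefore expect the real proof to proceed by: (i) assembling the Quillen adjunction $F \dashv G$ into a relative bicartesian fibration over $\Delta^1$ (using the point-set adjunction to glue $\M$ and $\N$, with a compatible relative structure picking out the weak equivalences in each fiber); (ii) applying a fiberwise-localization result (the analogue of \cref{fiberwise localization} mentioned in the introduction) to localize fiberwise while retaining the bicartesian fibration structure; and (iii) reading off the resulting bicartesian fibration over $\Delta^1$ as the desired adjunction, then checking that its two adjoints agree with $\bbL F$ and $\bbR G$ as constructed above by restricting back to $\M^c$ and $\N^f$. The technical crux — and the step most likely to require genuine work — is (ii): showing that localizing each fiber of a relative bicartesian fibration produces an honest bicartesian fibration, which is where the Gepner–Haugseng–Nikolaus identification of cocartesian fibrations with lax colimits (\cite[Theorem 7.4]{GHN}) enters.
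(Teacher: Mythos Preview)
Your second approach is very close to the paper's strategy, and you correctly anticipated that the proof proceeds via a bicartesian fibration over $[1]$ and that the fiberwise localization result \cref{fiberwise localization} is the key input. However, there is a genuine gap in your step (ii). The object $(\M+\N) \to [1]$, equipped with the fiberwise weak equivalences, is \emph{not} a relative cocartesian fibration nor a relative cartesian fibration in the sense of \cref{define relative co/cart fibns}: for that, the cocartesian pushforward $F$ (resp.\ cartesian pullback $G$) would have to preserve \emph{all} weak equivalences, and Quillen functors do not do this in general. So there is no ``relative bicartesian fibration'' to which a fiberwise localization result applies, and the paper's \cref{fiberwise localization} only handles the one-sided (co- or contravariant) case.

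The paper's actual maneuver is to pass to the full relative subcategories $(\M^c+\N)$ and $(\M+\N^f)$ separately. By Kenny Brown's lemma, $F^c$ and $G^f$ \emph{do} preserve weak equivalences, so the first is a genuine relative cocartesian fibration and the second a genuine relative cartesian fibration; \cref{fiberwise localization} applies to each individually. The bicartesian structure is then recovered by showing that the common further restriction $(\M^c+\N^f)$ includes into both via weak equivalences in $(\RelCati)_{/\min([1])}$ (this is \cref{inclusions of collages are weak equivalences}, and its proof --- which again uses \cref{cat of fibt replacements is weakly contractible} and a comma-category argument with Theorem~A --- is where the real work lies). Thus $\locL(\M^c+\N^f) \to [1]$ is simultaneously equivalent to a cocartesian and to a cartesian fibration, hence bicartesian. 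Your instinct to restrict to $\M^c$ and $\N^f$ was right, but this restriction is the \emph{construction}, not merely a post-hoc check. Finally, your first approach via mapping spaces on $\M^{cf}$ is explicitly what the paper avoids (see \cref{intro rmk subcats of good objs} and the remark following \cref{cor quillen equivce}): the argument is designed not to invoke the fundamental theorem identifying hom-spaces in the localization.
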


\begin{defn}\label{defn der functors of q adjn}
Given a Quillen adjunction $F \adj G$, we refer to the the resulting adjunction $\bbL F \adj  \bbR G$ on localizations of \cref{adjunction thm} as its \bit{derived adjunction}.  We refer to $\bbL F$ as the \bit{left derived functor} of $F$, and to $\bbR G$ as the \bit{right derived functor} of $G$.
\end{defn}

\cref{adjunction thm} has the following easy consequence.

\begin{cor}\label{cor quillen equivce}
The derived adjunction
\[ \bbL F : \loc{\M}{\bW_\M} \adjarr \loc{\N}{\bW_\N} : \bbR G \]
of a Quillen equivalence
\[ F : \M \adjarr \N : G \]
of model $\infty$-categories is an adjoint equivalence.
\end{cor}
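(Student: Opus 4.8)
The plan is to use the standard fact that an adjunction of $\infty$-categories is an adjoint equivalence precisely when its unit and counit are both natural equivalences, and to verify this by unwinding the construction of the derived adjunction produced in \cref{adjunction thm}. I would first reduce to the case of the unit: passing to opposite model $\infty$-categories exchanges cofibrations with fibrations while preserving weak equivalences, so it turns the Quillen equivalence $F : \M \adjarr \N : G$ into a Quillen equivalence $G^{\mathrm{op}} : \N^{\mathrm{op}} \adjarr \M^{\mathrm{op}} : F^{\mathrm{op}}$, and this operation carries the derived adjunction $\bbL F \adj \bbR G$ to the derived adjunction $(\bbR G)^{\mathrm{op}} \adj (\bbL F)^{\mathrm{op}}$, interchanging its unit and counit. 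So it suffices to show that the unit $\eta : \mathrm{id}_{\loc{\M}{\bW_\M}} \ra \bbR G \circ \bbL F$ is a natural equivalence. Moreover, since the inclusion $(\M^c, \bW_\M^c) \hookra (\M, \bW_\M)$ induces an equivalence on localizations (\cref{inclusion of fibts induces equivce on gpd-compln}), every object of $\loc{\M}{\bW_\M}$ is equivalent to the image of a cofibrant object, so it is enough to check that the component $\eta_x$ is an equivalence for each cofibrant $x \in \M$.

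The crux is the identification of $\eta_x$. For cofibrant $x$, \cref{adjunction thm} gives that $\bbL F(x)$ is just the image in $\loc{\N}{\bW_\N}$ of the object $Fx \in \N$; computing $\bbR G$ of this object involves first choosing a fibrant replacement $Fx \xra{\sim} Z$ in $\N$, after which $\bbR G(\bbL F(x)) \simeq GZ$. Tracing through the construction of the derived adjunction, I expect that $\eta_x$ is precisely the image in $\loc{\M}{\bW_\M}$ of the composite $x \ra GFx \ra GZ$, where the first arrow is the unit of the point-set adjunction $F \adj G$ and the second is $G$ applied to $Fx \xra{\sim} Z$ --- that is, the localization of the \emph{derived unit} of the Quillen adjunction at $x$. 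This is the step I expect to be the main obstacle: it requires reading off the unit of the abstractly-constructed adjunction of \cref{adjunction thm} at the level of objects, hence a firm grip on the machinery underlying that theorem's proof (relative co/cartesian fibrations, bicartesian fibrations, and fiberwise localization); depending on how transparently the derived adjunction is built from a bicartesian fibration presenting $F \adj G$, this identification could be almost immediate or could take some genuine bookkeeping.

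Granting the identification, the conclusion is formal. The hypothesis that $F \adj G$ is a Quillen equivalence means, in one of its standard equivalent formulations, that the derived unit $x \ra GZ$ is a weak equivalence in $\M$ for every cofibrant $x$ (and, dually, that the derived counit is a weak equivalence for every fibrant object of $\N$); the passage between this formulation and the characterization via weak equivalences of adjoint maps is a routine two-out-of-three argument with the triangle identities. Hence each $\eta_x$ is an equivalence in $\loc{\M}{\bW_\M}$, so $\eta$ is a natural equivalence; by the duality of the first paragraph the same conclusion applies to the counit of $\bbL F \adj \bbR G$, and therefore that adjunction is an adjoint equivalence.
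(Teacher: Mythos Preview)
Your proposal is correct and follows essentially the same route as the paper: reduce to showing the unit is a natural equivalence (the paper says ``the counit follows from a dual argument'' where you spell out the passage to opposites), pick a cofibrant representative, identify the derived unit at that object as the image of the composite $\tilde{x} \xra{\eta^{F \adj G}_{\tilde{x}}} G(F(\tilde{x})) \ra G(\bbR(F(\tilde{x})))$, and conclude by the definition of Quillen equivalence. The identification of the derived unit that you flag as the crux is exactly what the paper handles by the phrase ``it follows from the proof of \cref{adjunction thm}'', so your assessment of where the real content lies is on the mark.
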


\begin{rem}
With \cref{adjunction thm} in hand, to prove \cref{cor quillen equivce} it would suffice to show that either one of the derived adjoint functors is an equivalence; this can be accomplished using the \textit{fundamental theorem of model $\infty$-categories} (\Cref{fundthm:fundamental theorem}), which provides an explicit description of the hom-spaces in the localization of a model $\infty$-category.  However, our proofs of \cref{adjunction thm} and of \cref{cor quillen equivce} will not rely on that result (recall \cref{intro rmk subcats of good objs}).
\end{rem}

\begin{rem}
A number of examples of Quillen adjunctions and Quillen equivalences are provided in \cref{sspaces:subsection examples of adjunctions}.
\end{rem}

\subsection{Homotopy co/limits}\label{subsection ho-co-lims}

Some of the most important operations one can perform within an $\infty$-category are the extraction of \textit{limits} and \textit{colimits}.  However, co/limit functors on relative $\infty$-categories do not generally take natural weak equivalences to weak equivalences.  In view of the theory of derived adjunctions laid out in \cref{subsection qadjns and qeqs}, in the setting of model $\infty$-categories it is therefore important to determine sufficient conditions under which co/limit functors can be derived, i.e.\! under which they determine left/right Quillen functors.

We now codify this desired situation.

\begin{notn}
For a model $\infty$-category $\M$ and an $\infty$-category $\C$, we write $\bW_{\Fun(\C,\M)} \subset \Fun(\C,\M)$ for the subcategory of natural weak equivalences.  Of course, considering $(\M,\bW)$ as a relative $\infty$-category, via \cref{rnerves:define internal hom in relcats} this identifies as
\[ \begin{tikzcd}
\bW_{\Fun(\C,\M)} \arrow[hook]{r} \arrow{d}[sloped, anchor=north]{\sim} & \Fun(\C,\M) \arrow{d}[sloped, anchor=south]{\sim} \\
\Fun(\min(\C),\M)^\bW \arrow[hook]{r} & \Fun(\min(\C),\M)^\Rel .
\end{tikzcd} \]
\end{notn}

\begin{defn}
Let $\M$ be a model $\infty$-category, and let $\C$ be an $\infty$-category.  Suppose that $\M$ admits $\C$-shaped colimits, so that we obtain an adjunction
\[ \colim : \Fun(\C,\M) \adjarr \M : \const . \]
If $(\Fun(\C,\M),\bW_{\Fun(\C,\M)})$ admits a model structure such that this adjunction becomes a Quillen adjunction, we refer to its resulting left derived functor
\[ \bbL \! \colim : \loc{\Fun(\C,\M)}{\bW_{\Fun(\C,\M)}} \ra \loc{\M}{\bW_\M} \]
as a \bit{homotopy colimit} functor.  Dually, suppose that $\M$ admits $\C$-shaped limits, so that we obtain an adjunction
\[ \const : \M \adjarr \Fun(\C,\M) : \lim \! . \]
If $(\Fun(\C,\M),\bW_{\Fun(\C,\M)})$ admits a model structure such that this adjunction becomes a Quillen adjunction, we refer to its resulting right derived functor
\[ \loc{\M}{\bW_\M} \la \loc{\Fun(\C,\M)}{\bW_{\Fun(\C,\M)}} : \bbR \! \lim \]
as a \bit{homotopy limit} functor.
\end{defn}

Now, to check that an adjunction between model $\infty$-categories is a Quillen adjunction, it suffices to show only that either its left adjoint is a left Quillen functor or that its right adjoint is a right Quillen functor.  This leads us to define the following ``absolute'' model structures on functor $\infty$-categories.

\begin{defn}\label{defn proj and inj model strs}
Let $\M$ be a model $\infty$-category, and let $\C$ be an $\infty$-category.  Suppose that there exists a model structure on $\Fun(\C,\M)$ whose weak equivalences and fibrations are determined objectwise.  In this case, we call this as the \bit{projective model structure}, and denote it by $\Fun(\C,\M)_\projective$.  Dually, suppose that there exists a model structure on $\Fun(\C,\M)$ whose weak equivalences and cofibrations are determined objectwise.  In this case, we call this the \bit{injective model structure}, and denote it by $\Fun(\C,\M)_\injective$.
\end{defn}

\begin{rem}\label{rem q adjns for proj and inj model strs}
\cref{defn proj and inj model strs} immediately implies
\begin{itemizesmall}
\item
that whenever $\M$ admits $\C$-shaped colimits and there exists a projective model structure on $\Fun(\C,\M)$, then we obtain a Quillen adjunction
\[ \colim : \Fun(\C,\M)_\projective \adjarr \M : \const , \]
and
\item that that whenever $\M$ admits $\C$-shaped limits and there exists an injective model structure on $\Fun(\C,\M)$, then we obtain a Quillen adjunction
\[ \const : \M \adjarr \Fun(\C,\M)_\injective : \lim \! . \]
\end{itemizesmall}
\end{rem}

\begin{rem}\label{rem proj needs cofgen while inj needs comb}
As in the classical case, the projective and injective model structures do not always exist.  However, it appears that
\begin{itemizesmall}
\item the projective model structure will exist whenever $\M$ is cofibrantly generated (see \cref{sspaces:section cofgen model infty-cats}), while
\item the injective model structure will exist whenever $\M$ is \textit{combinatorial} (that is, its underlying $\infty$-category is presentable and its model structure is cofibrantly generated);
\end{itemizesmall}
see \cite[Theorem 11.6.1]{Hirsch} and Proposition T.A.2.8.2.\footnote{In the construction of the projective model structure, one can replace the appeal to the ``set of objects'' of $\C$ with an arbitrary surjective map $C \ra \C$ from some $C \in \Set \subset \Cati$; the necessary left Kan extension will exist as long as $\M$ is cocomplete, which seems to be generally true in practice.  However, there is also some subtlety regarding whether the resulting sets of would-be generating cofibrations and generating acyclic cofibrations do indeed admit the small object argument: it suffices that the set $I$ (resp.\! $J$) of generating (resp.\! acyclic) cofibrations have that all the sources of its elements be small with respect to the \textit{tensors} of its elements over the various hom-spaces of $\C$.  However, it similarly seems that in practice these objects will in fact be small with respect to the entire $\infty$-category $\M$, so that this is not actually an issue.}
\end{rem}

\subsection{Reedy model structures}\label{subsection reedy model str}

Whereas the projective and injective model structures of \cref{defn proj and inj model strs} are not always known to exist (even for model 1-categories), there is a class of examples in which a model structure on $(\Fun(\C,\M),\bW_{\Fun(\C,\M)})$ is always guaranteed to exist: the \textit{Reedy model structure}.  This does not make any additional assumptions on the model $\infty$-category $\M$ (recall \cref{rem proj needs cofgen while inj needs comb}), but instead it requires that $\C$ be a (strict) 1-category equipped with a certain additional structure.

The Reedy model structure will be useful in a number of settings: we'll use \cref{ex reedy structure on n} a number of times in the proof of \cref{adjunction thm}, it will be heavily involved in our development of ``cylinder objects'' and ``path objects'' in model $\infty$-categories in \cref{fundthm:section fundamental theorem} (leading towards the fundamental theorem of model $\infty$-categories (\Cref{fundthm:fundamental theorem})), and it is also closely related to the resolution model structure (see e.g.\! \cref{sspaces:subsection GHOsT motivation}).


We begin by fixing the following definition. 

\begin{defn}\label{define reedy cat}
Let $\C \in \strcat$ be a gaunt category equipped with a factorization system defined by two wide subcategories $\rvec{\C} , \lvec{\C} \subset \C$; that is, every morphism $\varphi$ in $\C$ admits a unique factorization as a composite $\rvec{\varphi} \circ \lvec{\varphi}$, where $\rvec{\varphi}$ is in $\rvec{\C}$ and $\lvec{\varphi}$ is in $\lvec{\C}$.  Suppose there do not exist any infinite ``decreasing'' zigzags of non-identity morphisms in $\C$, where by ``decreasing'' we mean that all forward-pointing arrows lie in $\lvec{\C}$ and all backwards-pointing arrows lie in $\rvec{\C}$.  Then, we say that $\C$ is a \bit{Reedy category}, and we refer to the defining subcategories $\rvec{\C},\lvec{\C} \subset \C$ respectively as its \bit{direct subcategory} and its \bit{inverse subcategory}.
\end{defn}

\begin{rem}\label{different defns of reedy cats}
\cref{define reedy cat} is lifted from Definition T.A.2.9.1.  There is also a more restrictive definition in the literature, given for instance as \cite[Definition 15.1.2]{Hirsch}, in which one requires that $\C$ comes equipped with a ``degree function'' $\deg : \Nerve(\C)_0 \ra \bbN$ such that all non-identity morphisms in $\rvec{\C}$ raise degree while all non-identity morphisms in $\lvec{\C}$ lower degree: the nonexistence of infinite decreasing zigzags then follows from the fact that $\bbN$ has a minimal element.

However, as pointed out in \cite[Remark 15.1.4]{Hirsch}, the results of \cite[Chapter 15]{Hirsch} easily generalize to the case when the degree function takes values in ordinals rather than simply in nonnegative integers.  Indeed, Notation T.A.2.9.11 introduces the notion of a ``good filtration'' on a Reedy category, which is a transfinite total ordering of its objects that effectively serves the same purpose as an ordinary degree function (although note that a degree function need not be injective in general), and Remark T.A.2.9.12 observes that good filtrations always exist.

In any case, these data (either degree functions or good filtrations) both reflect the most important feature of Reedy categories, namely their amenability to inductive manipulations.  In practice, we will generally only use Reedy categories of the more restrictive sort, but it is no extra effort to work in the more general setting.
\end{rem}

\begin{defn}\label{latching and matching cats}
Given a Reedy category $\C$, we define its \bit{latching category} at an object $c \in \C$ to be the full subcategory
\[ \partial \left( \rvec{\C}_{/c} \right) \subset \rvec{\C}_{/c} \]
on all objects besides $\id_c$, and we define its \bit{matching category} at an object $c \in \C$ to be the full subcategory
\[ \partial \left( \lvec{\C}_{c/} \right) \subset \lvec{\C}_{c/} \]
on all objects besides $\id_c$.
\end{defn}

\begin{rem}\label{notation latching and matching objects}
We will assume familiarity with the basic theory of Reedy categories.  For further details, we refer the reader to \cite[Chapter 15]{Hirsch} or to \sec T.A.2.9 (with the caveat that the latter source works in somewhat greater generality than the former, as explained in \cref{different defns of reedy cats}).  In particular, given a functor
\[ \partial \left( \rvec{\C}_{/c} \right) \xra{F} \M \]
(e.g.\! the restriction of a functor $\C \xra{F} \M$) we will write
\[ \Latch_c(F) = \colim_{\partial \left( \rvec{\C}_{/c} \right)}(F) , \]
and given a functor
\[ \partial \left( \lvec{\C}_{c/} \right) \xra{G} \M \]
(e.g.\! the restriction of a functor $\C \xra{G} \M$) we will write
\[ \Match_c(F) = \lim_{\partial \left( \lvec{\C}_{c/} \right)}(G) . \]
(This notation jibes with that of item \Cref{sspaces:section conventions}\cref{sspaces:generalized matching and latching}).
\end{rem}

\begin{rem}\label{reedy constrns work in infty-cats}
In general, the usual constructions with Reedy categories go through equally well when the target is an $\infty$-category.  In particular, we explicitly record here that given a bicomplete $\infty$-category $\M$ and a Reedy category $\C$, one can inductively construct both objects and morphisms of $\Fun(\C,\M)$ in exactly the same manner as when $\M$ is merely a category, using latching/matching objects and (relative) latching/matching maps.  For the construction of objects this is observed as Remark T.A.2.9.16, but both of these assertions follow easily from Proposition T.A.2.9.14.
\end{rem}

As indicated at the beginning of this subsection, the primary reason for our interest in Reedy categories is the following result.

\begin{thm}\label{reedy model str thm}
Let $\M$ be a model $\infty$-category, and let $\C$ be a Reedy category.  Then there exists a model structure on $\Fun(\C,\M)$, in which a map $F \ra G$ is
\begin{itemize}
\item a weak equivalence if and only if the induced maps
\[ F(c) \ra G(c) \]
are in $\bW \subset \M$ for all $c \in \C$,
\item a (resp.\! acyclic) cofibration if and only if the relative latching maps
\[ F(c) \coprod_{\Latch_c(F)} \Latch_c(G) \ra G(c) \]
are in $\bC \subset \M$ (resp.\! $\bW \cap \bC \subset \M$) for all $c \in \C$,
\item a (resp.\! acyclic) fibration if and only if the relative matching maps
\[ F(c) \ra \Match_c(F) \underset{\Match_c(G)}{\times} G(c) \]
are in $\bF \subset \M$ (resp.\! $\bW \cap \bF \subset \M$) for all $c \in \C$.
\end{itemize}
\end{thm}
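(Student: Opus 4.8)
## Proof proposal for the Reedy model structure theorem

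The plan is to mimic the classical proof (as in \cite[Chapter 15]{Hirsch} or \S T.A.2.9), taking care that every construction is interpreted $\infty$-categorically via the machinery flagged in \cref{reedy constrns work in infty-cats}. First I would verify the three defining classes are closed under retracts and contain the equivalences/invertible maps appropriately, which for weak equivalences is immediate (objectwise) and for cofibrations/fibrations follows from the analogous stability of $\bC$, $\bW \cap \bC$, $\bF$, $\bW \cap \bF$ in $\M$ together with the fact that relative latching/matching maps are functorial in the map $F \to G$; the two-out-of-three property for weak equivalences is again objectwise. The bulk of the work is then (a) the two lifting axioms and (b) the two factorization axioms, and both are proved by transfinite induction along a good filtration (\cref{different defns of reedy cats}) of the objects of $\C$ — this is exactly where the Reedy hypothesis (no infinite decreasing zigzags) is used.

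For the \emph{factorization axioms}, given a map $F \to G$ I would construct the factorization $F \to H \to G$ one object at a time: having defined $H$ on all objects of degree $< \alpha$ (and the requisite maps), at an object $c$ of degree $\alpha$ the latching object $\Latch_c(H)$ and matching object $\Match_c(H)$ are already determined by the inductive data, and I factor the canonical map
\[ F(c) \coprod_{\Latch_c(F)} \Latch_c(H) \;\longrightarrow\; \Match_c(H) \underset{\Match_c(G)}{\times} G(c) \]
in $\M$ into a cofibration followed by an acyclic fibration (resp.\ an acyclic cofibration followed by a fibration), defining $H(c)$ to be the intermediate object. One must check that this choice is compatible with all the structure maps — i.e.\ that it genuinely assembles into a functor $H \in \Fun(\C,\M)$ and into maps of functors — which is precisely the content of Proposition T.A.2.9.14 cited in \cref{reedy constrns work in infty-cats}; given that, the resulting factorization has the correct relative latching (resp.\ matching) maps by construction.

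For the \emph{lifting axioms}, suppose given a commuting square with left map $i : A \to B$ a cofibration and right map $p : X \to Y$ an acyclic fibration (the other case is dual). I would build the lift $B \to X$ by transfinite induction on degree: at an object $c$ of degree $\alpha$, the already-constructed partial lift determines a map from the pushout $A(c) \coprod_{\Latch_c(A)} \Latch_c(B)$ to $X(c)$ and a compatible map $B(c) \to Y(c)$, and the relative latching map of $i$ at $c$ is a cofibration in $\M$ while $p$ at $c$ is an acyclic fibration (and if $i$ is acyclic, the relative latching map is an acyclic cofibration), so the requisite lift $B(c) \to X(c)$ exists by the lifting axiom in $\M$; one then checks the lifts are compatible across the structure maps, again via Proposition T.A.2.9.14. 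Finally I would record the one genuinely non-formal input: that a map in $\Fun(\C,\M)$ is an acyclic cofibration in the sense above iff it is a cofibration and a weak equivalence, and dually — this is the Reedy-category analogue of the usual lemma, proved by the same induction showing that a relative latching map of a weak equivalence which is a cofibration is in fact an acyclic cofibration (using that in $\M$ a cofibration is acyclic iff it has the left lifting property against fibrations, plus the inductive structure), and dually for matching maps.

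The main obstacle I anticipate is not any single axiom but the pervasive bookkeeping of \emph{coherence}: in the $\infty$-categorical setting each inductive step produces not just an object or a map but a coherent diagram, and one must be sure that the colimits/limits defining $\Latch_c$ and $\Match_c$, the pushout–pullback comparison maps, and the choices of factorizations/lifts all glue along the good filtration without imposing further compatibilities. The paper's strategy is to outsource this entirely to Proposition T.A.2.9.14 and Remark T.A.2.9.16, so the real content of the proof is checking that the classical inductive arguments are phrased so as to invoke that proposition correctly at each stage; once that is granted, every individual verification reduces to the corresponding axiom or lemma already available in $\M$.
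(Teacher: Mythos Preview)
Your proposal is correct and matches the paper's approach exactly: the paper's proof is the single sentence ``The proof is identical to that of Proposition T.A.2.9.19 (or to those of \cite[Theorems 15.3.4(1) and 15.3.5]{Hirsch}),'' and your sketch is precisely an outline of those classical inductive arguments, with the $\infty$-categorical coherence outsourced to Proposition T.A.2.9.14 just as \cref{reedy constrns work in infty-cats} indicates. One small slip to fix in your lifting paragraph: the right-hand map in the inductive lifting square should be the \emph{relative matching map} $X(c) \to \Match_c(X) \times_{\Match_c(Y)} Y(c)$ of $p$ (an acyclic fibration in $\M$ by hypothesis), not $p(c)$ itself --- this is what forces the lift $B(c) \to X(c)$ to be compatible with the already-constructed partial lift along the inverse subcategory, so that the ``check compatibility afterwards'' step is in fact built into the lifting problem.
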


\begin{proof}
The proof is identical to that of Proposition T.A.2.9.19 (or to those of \cite[Theorems 15.3.4(1) and 15.3.5]{Hirsch}).
\end{proof}

\begin{defn}\label{define reedy model str}
We refer to the model structure of \cref{reedy model str thm} as the \bit{Reedy model structure} on $\Fun(\C,\M)$, and we denote this model $\infty$-category by $\Fun(\C,\M)_\Reedy$.
\end{defn}

\begin{ex}\label{ex reedy structure on n}
There is a Reedy category structure on $[n] \in \bD \subset \strcat$ determined by the degree function $\deg(i) = i$.  As the inverse subcategory $\lvec{[n]} \subset [n]$ associated to this Reedy category structure consists only of identity maps, the resulting Reedy model structure $\Fun([n],\M)_\Reedy$ coincides with the projective model structure $\Fun([n],\M)_\projective$ of \cref{defn proj and inj model strs}.
\end{ex}

\begin{rem}
In particular, \cref{ex reedy structure on n} shows that the projective model structure $\Fun([n],\M)_\projective$ always exists (without any additional assumptions on $\M$).  We will use this fact repeatedly without further comment.
\end{rem}

\begin{rem}\label{rem qeqs between inj proj and reedy}
It follows essentially directly from the definitions that whenever they all exist, the projective, injective, and Reedy model structures assemble into a commutative diagram
\[ \begin{tikzcd}[row sep=2cm]
\Fun(\C,\M)_\projective \horizadjntwocolumns \diagadjn & & \Fun(\C,\M)_\injective \\
& \Fun(\C,\M)_\Reedy \antidiagadjn
\end{tikzcd} \]
of Quillen equivalences.  (If only two of them exist, then they still participate in the indicated Quillen equivalence.)
\end{rem}

The Reedy model structure is also functorial in exactly the way one would hope.

\begin{thm}
For any Reedy category $\C$, if $\M \adjarr \N$ is a Quillen adjunction (resp.\! Quillen equivalence) of model $\infty$-categories, then the induced adjunction
\[ \Fun(\C,\M)_\Reedy \adjarr \Fun(\C,\N)_\Reedy \]
is a Quillen adjunction (resp.\! Quillen equivalence) as well.
\end{thm}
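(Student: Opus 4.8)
The plan is to handle the two assertions separately, in each case peeling off the Reedy structure objectwise so as to reduce to the corresponding statement about $\M \adjarr \N$ itself. Throughout I write $F_* : \Fun(\C,\M)_\Reedy \adjarr \Fun(\C,\N)_\Reedy : G_*$ for the induced adjunction (postcomposition with $F$ and with $G$), which exists by \cref{reedy model str thm} together with the functoriality of $\Fun(\C,-)$ on adjunctions.

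For the Quillen adjunction statement, it suffices (by the criterion recalled just before \cref{defn proj and inj model strs}) to show that $F_*$ is a left Quillen functor, i.e.\ that it preserves cofibrations and acyclic cofibrations. Fix a Reedy (acyclic) cofibration $\alpha : X \ra Y$ in $\Fun(\C,\M)$ and an object $c \in \C$. Since $\Latch_c(-)$ is a colimit over $\partial(\rvec{\C}_{/c})$ and $F$, being a left adjoint of $\infty$-categories, preserves all colimits, we get natural identifications $\Latch_c(F_* X) \simeq F(\Latch_c X)$ and $\Latch_c(F_* Y) \simeq F(\Latch_c Y)$, and — using that $F$ also preserves the relevant pushout —
\[ (F_* X)(c) \coprod_{\Latch_c(F_* X)} \Latch_c(F_* Y) \simeq F\!\left( X(c) \coprod_{\Latch_c X} \Latch_c Y \right) . \]
Hence the relative latching map of $F_* \alpha$ at $c$ is $F$ applied to the relative latching map of $\alpha$ at $c$, which by hypothesis is an (acyclic) cofibration in $\M$; as $F$ is left Quillen, its image is an (acyclic) cofibration in $\N$. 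Since $c$ was arbitrary, $F_*\alpha$ is a Reedy (acyclic) cofibration. Thus $F_*$ is left Quillen and $F_* \adj G_*$ is a Quillen adjunction. (Dually, one could instead check that $G_*$ is right Quillen, using that the right adjoint $G$ preserves the matching limits.)

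Now suppose $F \adj G$ is a Quillen equivalence; by the above, $F_* \adj G_*$ is a Quillen adjunction, so it remains to verify the defining property of a Quillen equivalence. Let $X \in \Fun(\C,\M)$ be Reedy-cofibrant, let $Y \in \Fun(\C,\N)$ be Reedy-fibrant, let $f : F_* X \ra Y$ be a morphism, and let $\hat f : X \ra G_* Y$ be its adjunct. Because $F_* \adj G_*$ is obtained from $F \adj G$ by postcomposition — so that its unit and counit are the whiskerings of those of $F \adj G$ — the component $\hat f(c) : X(c) \ra G(Y(c))$ is, for each $c \in \C$, exactly the $F \adj G$-adjunct of $f(c) : F(X(c)) \ra Y(c)$. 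Moreover, $X$ Reedy-cofibrant implies each $X(c)$ is cofibrant in $\M$, and $Y$ Reedy-fibrant implies each $Y(c)$ is fibrant in $\N$. Since $F \adj G$ is a Quillen equivalence, it follows that $f(c) \in \bW_\N$ if and only if $\hat f(c) \in \bW_\M$, and since weak equivalences in both Reedy model structures are detected objectwise, $f$ is a weak equivalence if and only if $\hat f$ is. This is precisely the condition for $F_* \adj G_*$ to be a Quillen equivalence.

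The content of this argument is almost entirely a verbatim import from the $1$-categorical theory of Reedy model structures — that latching objects are preserved by left adjoints (dually, matching objects by right adjoints), and that Reedy-cofibrant (resp.\ Reedy-fibrant) objects are objectwise cofibrant (resp.\ fibrant) — all of which go through unchanged in the $\infty$-categorical setting by \cref{reedy constrns work in infty-cats} (see \cite[Chapter 15]{Hirsch} or \sec T.A.2.9). The one genuinely $\infty$-categorical point, and the place I expect the only real (if mild) subtlety, is the compatibility of the induced adjunction $F_* \adj G_*$ with the evaluation functors $\Fun(\C,-) \ra (-)$, i.e.\ that its unit and counit are the componentwise whiskerings of those of $F \adj G$; this is a formal consequence of the $(\infty,2)$-functoriality of $\Fun(\C,-)$ and carries no obstruction.
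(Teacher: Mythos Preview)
Your proof is correct and is essentially the same as the paper's: the paper simply asserts that the proof is identical to that of \cite[Proposition 15.4.1]{Hirsch}, and what you have written is precisely (the $\infty$-categorical transcription of) Hirschhorn's argument there. In particular, your two key reductions --- that a left adjoint preserves latching objects and relative latching maps, and that Reedy-(co)fibrant implies objectwise (co)fibrant so the Quillen-equivalence condition can be checked levelwise --- are exactly the content of that reference.
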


\begin{proof}
The proof is identical to that of \cite[Proposition 15.4.1]{Hirsch}.
\end{proof}

Of course, much of our interest in functor $\infty$-categories stems from the fact that these are the source of co/limit functors.
Thus, we will often want to know when a co/limit functor is a Quillen functor with respect to a given Reedy model structure.
This will not always be the case.  However, there does exist a class of ``absolute'' examples, as encoded by the following.

\begin{defn}\label{defn co/fibt constants}
Let $\C$ be a Reedy category.  We say that $\C$ has \bit{cofibrant constants} if for every model $\infty$-category $\M$ admitting $\C$-shaped limits, the adjunction
\[ \const : \M \adjarr \Fun(\C,\M)_\Reedy : \lim \]
is a Quillen adjunction.  Dually, we say that $\C$ has \bit{fibrant constants} if for every model $\infty$-category $\M$ admitting $\C$-shaped colimits, the adjunction
\[ \colim : \Fun(\C,\M)_\Reedy \adjarr \M : \const \]
is a Quillen adjunction.
\end{defn}

Such Reedy categories admit the following characterization, which implies that \cref{defn co/fibt constants} actually coincides with the classical definition (see \cite[Definition 15.10.1 and Proposition 15.10.2]{Hirsch}).

\begin{prop}
Let $\C$ be a Reedy category.  Then $\C$ has cofibrant constants if and only if for every $c \in \C$ the latching category $\partial \left( \rvec{\C}_{/c} \right)$ is either empty or connected.  Dually, $\C$ has fibrant constants if and only if for every $c \in \C$ the matching category $\partial \left( \lvec{\C}_{c/} \right)$ is either empty or connected.
\end{prop}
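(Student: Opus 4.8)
The plan is to reduce the assertion, in each direction, to the behavior of the functor $\const$ on cofibrations, and then to read off the combinatorial condition from a careful inspection of the relative latching maps of a constant map $\const(f)$. I will treat only the statement about cofibrant constants; the one about fibrant constants is formally dual (replace latching categories by matching categories, colimits by limits, and "initial object" by "terminal object" throughout). First, since $\const : \M \to \Fun(\C,\M)_\Reedy$ always carries $\bW_\M$ into $\bW_{\Fun(\C,\M)}$ — a constant diagram on a weak equivalence is objectwise a weak equivalence — the adjunction $\const \adj \lim$ is a Quillen adjunction if and only if $\const$ is a left Quillen functor, and, given that it preserves weak equivalences, this holds if and only if $\const$ preserves cofibrations. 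By \cref{reedy model str thm} the latter amounts to the assertion that for every cofibration $f : X \to Y$ in $\M$ and every $c \in \C$ the relative latching map $X \coprod_{\Latch_c(\const(X))} \Latch_c(\const(Y)) \to Y$ at $c$ (where $\Latch_c(\const(X)) = \colim_{\partial(\rvec{\C}_{/c})}(\const(X))$) is a cofibration. So the entire statement reduces to understanding these maps.

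For the "only if" direction I would argue contrapositively. If $\partial(\rvec{\C}_{/c})$ is nonempty and disconnected for some $c$, it suffices to produce a single model $\infty$-category $\M$ admitting $\C$-shaped limits for which $\const$ fails to be left Quillen, and here one may take $\M$ to be an ordinary model category: it is in particular a model $\infty$-category, and, since all the relevant latching objects are then ordinary colimits, its Reedy model $\infty$-structure is just its classical Reedy model structure. The requisite $\M$ and cofibration $f$ are then those of the classical computation \cite[Proposition~15.10.2]{Hirsch}; concretely, for a cofibration $f : \emptyset_\M \to Y$ the relative latching map at $c$ is the codiagonal $\coprod_{\pi_0(\partial(\rvec{\C}_{/c}))} Y \to Y$, which (as $\pi_0$ has at least two elements) is not a cofibration. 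Hence $\C$ does not have cofibrant constants.

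For the "if" direction, assume that every $\partial(\rvec{\C}_{/c})$ is empty or connected. The crux is a purely combinatorial claim: under this hypothesis every nonempty latching category $\partial(\rvec{\C}_{/c})$ in fact admits an initial object, and hence has weakly contractible classifying space. I would prove this by induction along a good filtration of $\C$ (see \cref{different defns of reedy cats}), the candidate initial object being the unique object $\psi : e \to c$ of $\partial(\rvec{\C}_{/c})$ whose source $e$ is minimal in $\rvec{\C}$; uniqueness comes from straightening a zigzag between two such objects using the inductively available initial objects of the lower latching categories, and initiality follows by a similar argument. Granting the claim, I examine the relative latching map of $\const(f)$ at $c$. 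When $\partial(\rvec{\C}_{/c}) = \emptyset$ it is simply $f$, hence a cofibration. When $\partial(\rvec{\C}_{/c})$ is nonempty, the identification of the colimit of a constant diagram with the tensor of the value by the classifying space of the indexing $\infty$-category gives $\Latch_c(\const(X)) \simeq X \otimes |\partial(\rvec{\C}_{/c})| \simeq X$, with the latching map an equivalence, and likewise for $Y$; hence the relative latching map is an equivalence of the underlying $\infty$-category $\M$, and so — equivalences of a model $\infty$-category being acyclic cofibrations (see \cite[\sec 1]{MIC-sspaces}) — in particular a cofibration. Therefore $\const$ preserves cofibrations, i.e.\ $\C$ has cofibrant constants.

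The main obstacle is the combinatorial claim invoked in the last step. Classically there is nothing to prove here, because a colimit of a constant diagram over a connected $1$-category is the value itself; but $\infty$-categorically the latching colimit remembers only the classifying space of the latching category, so one genuinely needs to upgrade connectedness to weak contractibility, and for this the argument must exploit the full Reedy structure — the degree function or good filtration — rather than merely the factorization system together with the connectedness hypothesis. A secondary point requiring care is that the relative latching map produced above is an equivalence of the underlying $\infty$-category $\M$, not merely a member of the marked subcategory $\bW_\M$; it is precisely because equivalences of $\M$ (and not arbitrary elements of $\bW_\M$) are cofibrations that the argument closes.
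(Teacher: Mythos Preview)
Your argument follows the same line as the paper's: both directions reduce to analyzing the relative latching maps of $\const(f)$, and the ``only if'' direction is handled in both by specializing to model $1$-categories and invoking \cite[Proposition 15.10.2]{Hirsch}. For the ``if'' direction, the paper simply asserts that $\Latch_c(\const(z)) \simeq z$ whenever the latching category is connected and concludes that the relative latching map is the identity on $y$; you correctly observe that in the $\infty$-categorical setting this requires the latching category to have weakly contractible nerve, not merely to be connected, and you supply the missing combinatorial step by arguing inductively along a good filtration that each nonempty latching category in fact has an initial object. That extra step is correct and genuinely needed --- the paper's proof elides precisely the subtlety you flag in your final paragraph --- so your proposal is, on this point, more complete than the paper's.
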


\begin{proof}
If $\C$ has cofibrant constants, then in particular for every model 1-category $\M$ (or really, every model 1-category under the more restrictive \cite[Definition 7.1.3]{Hirsch}) the adjunction $\const : \M \adjarr \Fun(\C,\M)_\Reedy : \lim$ is a Quillen adjunction.  Hence, the fact that the latching category $\partial \left( \rvec{\C}_{/c} \right)$ is either empty or connected for every $c \in \C$ follows from \cite[Theorem 15.10.8(1) and Proposition 15.10.2(1)]{Hirsch}.

Conversely, suppose that for every $c \in \C$ the latching category $\partial \left( \rvec{\C}_{/c} \right)$ is either empty or connected, and suppose that $\M$ is a model $\infty$-category admitting $\C$-shaped limits.  Then for any fixed choice of object $c \in \C$, for any object $z \in \M$ the latching object
\[ \Latch_c(\const(z)) = \colim_{\partial\left(\rvec{\C}_{/c} \right)} \const(z) \]
is either
\begin{itemizesmall}
\item always equivalent to $\es_\M$, or
\item always equivalent to $z$ itself.
\end{itemizesmall}
Hence, for any map $x \ra y$ in $\M$, the relative latching map
\[ (\const(x))(c) \coprod_{\Latch_c (\const(x))} \Latch_c (\const(y)) \ra (\const(y))(c) \]
is either $x \ra y$ or $y \ra y$.  It follows that $\const : \M \ra \Fun(\C,\M)_\Reedy$ is a left Quillen functor, so that the adjunction $\const : \M \adjarr \Fun(\C,\M)_\Reedy : \lim$ is a Quillen adjunction, as desired.

Of course, the dual claim follows from a dual argument.
\end{proof}

\begin{ex}\label{ex cat of simplices has fibt consts}
For any simplicial set $K \in s\Set$, its ``category of simplices'' (i.e.\! the category
\[ \bD_{/K} = \bD \underset{s\Set}{\times} s\Set_{/K} , \]
or equivalently the category $\Gr^-(K) \in \CartFib(\bD)$ obtained by considering $K \in \Fun(\bD^{op},\Set)$) is a Reedy category with fibrant constants; this is proved as \cite[Example 15.1.19 and Proposition 15.10.4]{Hirsch}.  In particular, the category $\Gr^-(\pt_{s\Set}) \cong \bD$ itself has fibrant constants.  By dualizing, we obtain that the category $\bD^{op}$ has cofibrant constants.  \end{ex}

\begin{rem}
Note that in general, the observations of \cref{ex cat of simplices has fibt consts} only provides Quillen adjunctions
\[ \const : \M \adjarr s\M_\Reedy : \lim \]
and
\[ \colim : c\M_\Reedy \adjarr \M : \const , \]
which are rather useless in practice (since $\bD^{op}$ has an initial object and $\bD$ has a terminal object).  To obtain a left Quillen functor $s\M_\Reedy \ra \M$, we will generally need to take a \textit{resolution} of the object $\const(\pt_\M) \in s\M$ (e.g.\! one coming from a \textit{simplicio-spatial model structure} (see \cref{defn sspatial model str})).\footnote{If $\C$ is an $\infty$-category (which is finitely bicomplete and admits geometric realizations) and we equip $\C$ with the \textit{trivial} model structure (see \cref{sspaces:trivial model structure}), then we do obtain a Quillen adjunction
$ |{-}| : s(\C_\triv)_\Reedy \adjarr \C_\triv : \const $.  However, unwinding the definitions, we see that this is really just the Quillen adjunction $|{-}| : (s\C)_\triv \adjarr \C_\triv : \const$.}
\end{rem}

\begin{ex}
The Reedy trick generalizes from model categories to model $\infty$-categories without change.  Recall that the walking span category $\Nerve^{-1}(\Lambda^2_0) = ( \bullet \la \bullet \ra \bullet)$ admits a Reedy category structure determined by the degree function described by the picture $(0 \la 1 \ra 2)$.  Moreover, it is straightforward to verify that this Reedy category has fibrant constants (see e.g.\! the proof of \cite[Proposition 15.10.10]{Hirsch}).  Thus, for any model $\infty$-category $\M$, we obtain a Quillen adjunction
\[ \colim : \Fun(\Nerve^{-1}(\Lambda^2_0),\M)_\Reedy \adjarr \M : \const , \]
in which the cofibrant objects of $\Fun(\Nerve^{-1}(\Lambda^2_1),\M)_\Reedy$ are precisely the diagrams of the form $x \la y \cofibn z$ for $x,y,z \in \M^c \subset \M$.
\end{ex}

\begin{ex}
Clearly, the poset $(\bbN,\leq)$ admits a Reedy structure (defined by the identity map, considered as a degree function) which has fibrant constants.  Thus, for any model $\infty$-category $\M$, we obtain a Quillen adjunction
\[ \colim : \Fun((\bbN,\leq),\M)_\Reedy \adjarr \M : \const , \]
in which the cofibrant objects of $\Fun((\bbN,\leq),\M)_\Reedy$ are precisely those diagrams consisting of cofibrations between cofibrant objects.\footnote{In fact, this Reedy poset has cofibrant constants as well.  However, the resulting Quillen adjunction will be trivial since this poset has an initial object.}  Dually, we obtain a Quillen adjunction
\[ \const : \M \adjarr \Fun((\bbN,\leq)^{op} , \M)_\Reedy : \lim , \]
in which the fibrant objects of $\Fun((\bbN,\leq)^{op} , \M)_\Reedy$ are precisely those diagrams consisting of fibrations between fibrant objects.
\end{ex}

We end this section by recording the following result.

\begin{lem}\label{latching and matching are reedy}
Let $\C$ be a Reedy category, and let $c \in \C$.
\begin{enumerate}
\item\label{latching stuff in latching and matching are reedy}
\begin{enumeratesub}
\item\label{latching cat is reedy}
The latching category $\partial \left( \rvec{\C}_{/c} \right)$ admits a Reedy structure with fibrant constants, in which the direct subcategory is the entire category and the inverse subcategory contains only the identity maps.
\item\label{latching in latching is latching}
With respect to the Reedy structure of part \ref{latching cat is reedy}, the canonical functor $\partial \left(\rvec{\C}_{/c} \right) \ra \C$ induces an isomorphism
\[ \partial \left( \rvec{\partial\left(\rvec{\C}_{/c}\right)}_{/(d \ra c)} \right) \xra{\cong} \partial \left( \rvec{\C}_{/d} \right) \]
of latching categories (from that of the object $(d \ra c) \in \partial\left(\rvec{\C}_{/c}\right)$ to that of the object $d \in \C$).
\end{enumeratesub}
\item\label{matching stuff in latching and matching are reedy}
\begin{enumeratesub}
\item\label{matching cat is reedy}
The matching category $\partial \left( \lvec{\C}_{c/} \right)$ admits a Reedy structure with cofibrant constants, in which the direct subcategory contains only the identity maps and the inverse subcategory is the entire category.
\item\label{matching in matching is matching}
With respect to the Reedy structure of part \ref{matching cat is reedy}, the canonical functor $\partial \left( \lvec{\C}_{c/} \right) \ra \C$ induces an isomorphism
\[ \partial \left( \lvec{\partial\left(\lvec{\C}_{c/}\right)}_{(c \ra d)/} \right) \xra{\cong} \partial \left( \lvec{\C}_{d/}\right) \]
of matching categories (from that of $(c \ra d) \in \partial \left( \lvec{\C}_{c/}\right)$ to that of $d \in \C$).
\end{enumeratesub}
\end{enumerate}
\end{lem}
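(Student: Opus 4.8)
The plan is to prove everything by unwinding the definitions of Reedy categories, latching categories, and matching categories, since all four assertions are essentially bookkeeping about factorization systems and zigzags. I would first dispatch part \ref{latching stuff in latching and matching are reedy}, and then obtain part \ref{matching stuff in latching and matching are reedy} by the evident duality (passing to $\C^{op}$, which swaps $\rvec{\C}$ and $\lvec{\C}$ and swaps latching with matching).

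For part \ref{latching cat is reedy}: the latching category $\partial(\rvec{\C}_{/c})$ is a full subcategory of $\rvec{\C}_{/c}$, hence all of its morphisms lie (after applying the forgetful functor) in $\rvec{\C}$. I would declare the direct subcategory to be all of $\partial(\rvec{\C}_{/c})$ and the inverse subcategory to consist only of identities; the required factorization system is then trivially present (every $\varphi$ factors as $\varphi \circ \id$). The nonexistence of infinite decreasing zigzags is inherited: a decreasing zigzag in $\partial(\rvec{\C}_{/c})$ has all backward arrows identities (since $\lvec{\partial(\rvec{\C}_{/c})}$ is discrete), so it is just a strictly increasing chain of non-identity morphisms of $\rvec{\C}$ lying over $c$; composing with the structure maps to $c$, such a chain would give an infinite decreasing zigzag in $\C$ (all arrows in $\rvec{\C}$, pointing backward), contradicting \cref{define reedy cat}. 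Finally, this Reedy structure has fibrant constants: the matching category of any object is empty (the inverse subcategory is discrete, so $\partial(\lvec{(\partial(\rvec{\C}_{/c}))}_{x/})$ has no objects), and the empty category is vacuously connected-or-empty, so the criterion of the preceding Proposition applies.

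For part \ref{latching in latching is latching}: fix an object $(d \xra{\alpha} c) \in \partial(\rvec{\C}_{/c})$, where $\alpha \in \rvec{\C}$ is a non-identity. Its latching category (with respect to the Reedy structure of part \ref{latching cat is reedy}, whose direct subcategory is everything) is the full subcategory of the slice $(\partial(\rvec{\C}_{/c}))_{/(d \to c)}$ on the objects other than $\id_{(d \to c)}$. An object of that slice is a commuting triangle over $c$, i.e.\ a morphism $\beta \colon d' \to d$ together with the data making $d' \to c$ factor through $\alpha$; since the ambient category $\rvec{\C}_{/c}$ is a slice of $\rvec{\C}$, such a $\beta$ automatically lies in $\rvec{\C}$, and it is non-identity precisely when $(d' \to c) \neq (d \to c)$ in the slice. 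This is exactly the data of an object of $\partial(\rvec{\C}_{/d})$, and the correspondence is visibly functorial and bijective on morphisms (a morphism on either side is a further compatible arrow in $\rvec{\C}$), so the canonical functor $\partial(\rvec{\C}_{/c}) \to \C$ induces the claimed isomorphism. I would be slightly careful to check that $\beta$ being an identity in $\rvec{\C}$ matches $(d' \to d \to c)$ being the identity object of the slice over $(d \to c)$, which is immediate from gauntness.

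The main obstacle is not difficulty but the risk of an off-by-one slip in part \ref{latching in latching is latching}: one must correctly match ``non-identity object of the iterated slice'' with ``non-identity morphism into $d$,'' and in particular confirm that the only object of $(\partial(\rvec{\C}_{/c}))_{/(d\to c)}$ that gets removed is $\id_{(d\to c)}$ and not, spuriously, more. Once the dictionary between triangles-over-$c$-factoring-through-$\alpha$ and arrows-into-$d$ is set up cleanly, everything is forced, and the dual statements \ref{matching cat is reedy} and \ref{matching in matching is matching} follow verbatim by replacing $\C$ with $\C^{op}$.
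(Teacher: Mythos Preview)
Your proposal is correct and follows the same route as the paper, which simply cites \cite[Proposition 15.10.6]{Hirsch} for parts \ref{latching cat is reedy} and \ref{matching cat is reedy} and declares parts \ref{latching in latching is latching} and \ref{matching in matching is matching} to ``follow by inspection''; you have merely written that inspection out. Two small cleanups: in your zigzag argument you wrote that the \emph{backward} arrows are forced to be identities, but it is the \emph{forward} arrows (those in $\lvec{\partial(\rvec{\C}_{/c})}$) that are---the conclusion that one is left with a one-directional chain in $\rvec{\C}$ is unaffected; and in part \ref{latching in latching is latching} you should note that the composite $d' \to d \to c$ of two non-identity morphisms in $\rvec{\C}$ is again non-identity (else one manufactures an infinite decreasing zigzag), so that the image object really does land in $\partial(\rvec{\C}_{/c})$.
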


\begin{proof}
Parts \ref{latching stuff in latching and matching are reedy}\ref{latching cat is reedy} \and \ref{matching stuff in latching and matching are reedy}\ref{matching cat is reedy} are precisely \cite[Proposition 15.10.6]{Hirsch}, and parts \ref{latching stuff in latching and matching are reedy}\ref{latching in latching is latching} \and \ref{matching stuff in latching and matching are reedy}\ref{matching in matching is matching} follow by inspection.
\end{proof}

\section{Relative co/cartesian fibrations and bicartesian fibrations}\label{section rel co-cart-fibns and bicart fibns}

In this brief section, we describe two enhancements of the theory of co/cartesian fibrations which we will need: in \cref{subsection rel co/cart fibns} we study \textit{relative co/cartesian fibrations}, while in \cref{subsection bicart fibns} we study \textit{bicartesian fibrations}.

\subsection{Relative co/cartesian fibrations}\label{subsection rel co/cart fibns}

Suppose we are given a diagram
\[ \begin{tikzcd}
\C \arrow{r}{F} & \RelCati \arrow{r}{\locL} \arrow{d}[swap]{\forget_\Rel} & \Cati \\
& \Cati .
\end{tikzcd} \]
In our proof of \cref{adjunction thm}, we will be interested in the relationship between the upper composite (of the componentwise localization of the diagram $F$ of relative $\infty$-categories) and the cocartesian fibration
\[ \Gr(\forget_\Rel \circ F) \ra \C . \]
In other words, we would like to take some sort of ``fiberwise localization'' of this cocartesian fibration.  In order to do this, we must keep track of the morphisms which we would like to invert.  This leads us to the following terminology.

\begin{defn}\label{define relative co/cart fibns}
Let $\C \in \Cati$, and suppose we are given a commutative diagram
\[ \begin{tikzcd}[column sep=1.5cm]
& \RelCati \arrow{d}{\forget_\Rel} \\
\C \arrow{ru}{F} \arrow{r}[swap]{\forget_\Rel \circ F} & \Cati .
\end{tikzcd} \]
Then, we write $\Gr_\Rel(F)$ for the relative $\infty$-category obtained by equipping $\Gr(\forget_\Rel \circ F)$ with the weak equivalences coming from the lift $F$ of $\forget_\Rel \circ F$.  Note that its weak equivalences all map to equivalences in $\C$, so that we can consider the canonical projection as a map $\Gr_\Rel(F) \ra \min(\C)$ of relative $\infty$-categories.  We write $\coCartFib_\Rel(\C)$ for the $\infty$-category of cocartesian fibrations over $\C$ equipped with such a relative $\infty$-category structure, and we call this the $\infty$-category of \bit{relative cocartesian fibrations} over $\C$.  The Grothendieck construction clearly lifts to an equivalence
\[ \Fun(\C,\RelCati) \xra[\sim]{\Gr_\Rel} \coCartFib_\Rel(\C) . \]
Of course, we have a dual notion of \bit{relative cartesian fibrations} over $\C$; these assemble into an $\infty$-category $\CartFib_\Rel(\C)$, which comes with an equivalence
\[ \Fun(\C^{op},\RelCati) \xra[\sim]{\Grop_\Rel} \CartFib_\Rel(\C) . \]
\end{defn}

\begin{rem}
Note that an arbitrary cocartesian fibration over $\C$ equipped with a subcategory of weak equivalences which project to equivalences in $\C$ does not necessarily define a relative cocartesian fibration: it must be classified by a diagram of relative $\infty$-categories and relative functors between them (i.e.\! the cocartesian edges must intertwine the weak equivalences).  A dual observation holds for cartesian fibrations.
\end{rem}

We can now precisely state and prove our desired correspondence.

\begin{prop}\label{fiberwise localization}
Let $\C \in \Cati$, and let $\C \xra{F} \RelCati$ classify $\Gr_\Rel(F) \in \coCartFib_\Rel(\C)$.  Then the induced map
\[ \locL(\Gr_\Rel(F)) \ra \C \]
is again a cocartesian fibration.  Moreover, we have a canonical equivalence
\[ \locL(\Gr_\Rel(F)) \simeq \Gr(\locL \circ F) \]
in $\coCartFib(\C)$, i.e.\! this cocartesian fibration classifies the composite
\[ \C \xra{F} \RelCati \xra{\locL} \Cati . \]
\end{prop}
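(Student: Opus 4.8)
\textit{Proof proposal.} The plan is to deduce both assertions from the description of cocartesian fibrations as lax colimits, i.e.\ from \cite[Theorem 7.4]{GHN}, used once for the diagram $\forget_\Rel \circ F$ and once for the localized diagram $\locL \circ F$. Observe first that the projection $\Gr_\Rel(F) \to \min(\C)$ of relative $\infty$-categories induces a map $\locL(\Gr_\Rel(F)) \to \locL(\min(\C)) \simeq \C$ (localizing at equivalences changes nothing), so that both $\locL(\Gr_\Rel(F))$ and $\Gr(\locL \circ F)$ naturally live over $\C$; and observe that it suffices to produce an equivalence between them in $\Cati_{/\C}$, since then $\locL(\Gr_\Rel(F)) \to \C$ inherits from $\Gr(\locL \circ F) \to \C$ the property of being a cocartesian fibration (classifying $\locL \circ F$). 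So the entire content is the construction of this equivalence.

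The key points are the following. By \cite[Theorem 7.4]{GHN}, for any $G : \C \to \Cati$ the cocartesian fibration $\Gr(G) \to \C$ exhibits $\Gr(G)$ as the lax colimit of $G$, and this lax colimit is computed by an explicit colimit in $\Cati$ of a diagram whose values have the form $G(c) \times K$, with the $\infty$-categories $K$ (and all the structure maps) built solely out of the slices of $\C$ --- in particular not involving $G$. Applying this with $G = \forget_\Rel \circ F$, I would upgrade this colimit presentation to one \emph{in $\RelCati$}, by equipping each piece $F(c) \times K$ with the product relative structure of $F(c) \in \RelCati$ and $\min(K)$. Since the ``$K$'' directions then carry only equivalences as weak equivalences, the resulting colimit in $\RelCati$ is exactly $\Gr_\Rel(F)$ --- with its weak equivalences the ``fiberwise'' ones, i.e.\ those edges that project to equivalences of $\C$ and whose fiber component is a weak equivalence. (This identification of the weak equivalences uses that the cocartesian transport functors of $\Gr(\forget_\Rel \circ F) \to \C$ are the relative functors $F(\varphi)$, and so preserve weak equivalences.)

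Now the argument closes: the localization functor $\locL : \RelCati \to \Cati$ is a left adjoint (with right adjoint $\min$) and hence preserves colimits, and moreover $\locL(F(c) \times \min(K)) \simeq \locL(F(c)) \times K$ --- a direct consequence of the universal property of localization together with the fact that functors preserve equivalences. Applying $\locL$ to the $\RelCati$-colimit presentation of $\Gr_\Rel(F)$ therefore yields the colimit in $\Cati$ of the diagram with values $\locL(F(c)) \times K$, which by \cite[Theorem 7.4]{GHN} again --- now for $G = \locL \circ F$ --- is precisely $\Gr(\locL \circ F)$. Tracking the projections to $\C$ throughout (all of the $K$'s and the structure maps are natural in the slices of $\C$) upgrades this to an equivalence in $\Cati_{/\C}$, which is what we wanted; the fibers of $\locL(\Gr_\Rel(F)) \to \C$ come out to be the localizations $\locL(F(c))$, and the dual statement for relative cartesian fibrations follows by the same argument.

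I expect the main obstacle to be exactly this interchange of $\locL$ with the lax-colimit presentation, carried out at the level of relative $\infty$-categories. The slogan ``$\infty$-categorical localization commutes with the Grothendieck construction here'' --- equivalently, ``the fiber over $c$ of $\locL(\Gr_\Rel(F)) \to \C$ is $\locL(F(c))$'' --- is \emph{not} formal, since localization does not commute with pullbacks in general; it is precisely for this reason that one must route through GHN's presentation of cocartesian fibrations as honestly computable lax colimits. Making this precise requires (i) pinning down the colimit of \cite[Theorem 7.4]{GHN} concretely enough to exhibit the $G(c) \times K$ shape; (ii) checking that the evident relative structures on the pieces $F(c) \times \min(K)$ assemble into a diagram in $\RelCati$ whose colimit really is $\Gr_\Rel(F)$, with the correct (fiberwise) subcategory of weak equivalences --- this is where the relative structure must be handled with care; and (iii) tracking naturality in $\C$ so that the final equivalence is one of cocartesian fibrations over $\C$ and not merely of $\infty$-categories.
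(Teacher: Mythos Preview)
Your proposal is correct and follows essentially the same route as the paper's proof: apply \cite[Theorem 7.4]{GHN} twice, lift the lax-colimit presentation to $\RelCati$, and use that $\locL$ preserves colimits and the relevant products. The paper makes the indexing explicit (the colimit is over $\TwAr(\C)$ with values $\C_{c_1/} \times F(c_0)$) and handles your point (ii) by observing that $\forget_\Rel$ is cocontinuous and product-preserving, so applying it to the $\RelCati$-colimit recovers the GHN presentation of $\Gr(\forget_\Rel \circ F)$, with the weak equivalences matching by definition of $\Gr_\Rel$.
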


\begin{proof}
By \cite[Theorem 7.4]{GHN}, we have a canonical equivalence
\[ \Gr(\locL \circ F) \simeq \colim \left( \TwAr(\C) \ra \C^{op} \times \C \xra{ \C_{-/} \times (\locL \circ F)} \Cati \times \Cati \xra{- \times -} \Cati  \right) . \]
Since the composite $\Cati \xra{\min} \RelCati \xra{\locL} \Cati$ is canonically equivalent to $\id_\Cati$ and the functor $\RelCati \xra{\locL} \Cati$ commutes with finite products by \cref{rnerves:localization preserves finite products}, this can be rewritten as
\[ \Gr(\locL \circ F) \simeq \colim \left( \TwAr(\C) \ra \C^{op} \times \C \xra{(\min \circ \C_{-/}) \times F} \RelCati \times \RelCati \xra{- \times -} \RelCati \xra{\locL} \Cati \right) . \]
Moreover, the functor $\RelCati \xra{\locL} \Cati$ commutes with colimits (being a left adjoint), and so this can be rewritten further as
\[ \Gr(\locL \circ F) \simeq \locL \left( \colim \left( \TwAr(\C) \ra \C^{op} \times \C \xra{(\min \circ \C_{-/}) \times F} \RelCati \times \RelCati \xra{- \times -} \RelCati \right) \right) . \]
On the other hand, $\RelCati \xra{\forget_\Rel} \Cati$ also commutes with colimits (being a left adjoint as well) and is symmetric monoidal for the respective cartesian symmetric monoidal structures, and so we obtain that
\begin{align*}
& \forget_\Rel \left( \colim \left( \TwAr(\C) \ra \C^{op} \times \C \xra{(\min \circ \C_{-/}) \times F} \RelCati \times \RelCati \xra{- \times -} \RelCati \right) \right) \\
& \simeq \colim \left( \TwAr(\C) \ra \C^{op} \times \C \xra{C_{-/} \times (\forget_\Rel \circ F)} \Cati \times \Cati \xra{- \times -} \Cati \right) \\
& \simeq \Gr(\forget_\Rel \circ F),
\end{align*}
again appealing to \cite[Theorem 7.4]{GHN}.  In other words, the underlying $\infty$-category of the relative $\infty$-category
\[ \colim \left( \TwAr(\C) \ra \C^{op} \times \C \xra{(\min \circ \C_{-/}) \times F} \RelCati \times \RelCati \xra{- \times -} \RelCati \right) \]
is indeed $\Gr(\forget_\Rel \circ F)$; moreover, by definition its subcategory of weak equivalences is inherited from the functor $F$, and hence we have an equivalence
\[ \Gr_\Rel(F) \simeq \colim \left( \TwAr(\C) \ra \C^{op} \times \C \xra{(\min \circ \C_{-/}) \times F} \RelCati \times \RelCati \xra{- \times -} \RelCati \right) \]
in $(\RelCati)_{/\min(\C)}$.\footnote{The structure map for the object on the right comes from its canonical projection to
\[ \min(\TwAr(\C)) \simeq \colim \left( \TwAr(\C) \xra{\const(\pt_\RelCati)} \RelCati \right) \]
followed by the composite projection $\min(\TwAr(\C)) \ra \min(\C^{op} \times \C) \ra \min(\C)$.}  Thus, we have obtained an equivalence
\[ \Gr(\locL \circ F) \simeq \locL(\Gr_\Rel(F)) \]
in $(\Cati)_{/\C}$, which completes the proof of both claims.
\end{proof}


\subsection{Bicartesian fibrations}\label{subsection bicart fibns}

Recall that an \textit{adjunction} can be defined as a map to $[1] \in \Cati$ which is simultaneously a cocartesian fibration and a cartesian fibration.  As we will be interested not just in adjunctions but in \textit{families} of adjunctions (e.g.\! two-variable adjunctions), it will be convenient to introduce the following terminology.

\begin{notn}\label{notation for bicart fibns}
Let $\C$ be an $\infty$-category.  We denote by $\biCartFib(\C)$ the $\infty$-category of \textit{bicartesian fibrations} over $\C$.  This is the underlying $\infty$-category of the bicartesian model structure of Theorem A.4.7.5.10; its objects are those functors to $\C$ which are simultaneously cocartesian fibrations and cartesian fibrations, and its morphisms are maps over $\C$ which are simultaneously morphisms of cocartesian fibrations and morphisms of cartesian fibrations (i.e.\! they preserve both cocartesian morphisms and cartesian morphisms).  We thus have canonical forgetful functors
\[ \coCartFib(\C) \hookla \biCartFib(\C) \hookra \CartFib(\C) , \]
which are both inclusions of (non-full) subcategories, and which both admit left adjoints by Remark A.4.7.5.12.  By Proposition A.4.7.5.17, the composite
\[ \biCartFib(\C) \hookra \coCartFib(\C) \xra[\sim]{\Gr} \Fun(\C,\Cati) \]
identifies $\biCartFib(\C)$ with a certain subcategory of $\Fun(\C,\Cati)$,
\begin{itemizesmall}
\item whose objects are those functors $\C \xra{F} \Cati$ such that for every map $c_1 \ra c_2$ in $\C$, the induced functor $F(c_1) \ra F(c_2)$ is a left adjoint, and
\item whose morphisms are those natural transformations satisfying a certain ``right adjointableness'' condition,
\end{itemizesmall}
and dually for the composite
\[ \biCartFib(\C) \hookra \CartFib(\C) \xra[\sim]{\Grop} \Fun(\C^{op},\Cati) . \]
\end{notn}

\begin{rem}\label{bicart-1 is not Adjn}
Giving an adjunction $\C \adjarr \D$ is equivalent to giving an object of $\biCartFib([1])$ equipped with certain identifications of its fibers, which data can be encoded succinctly as an object of the pullback
\[ \lim \left( \begin{tikzcd}
& \biCartFib([1]) \arrow{d}{(\ev_0,\ev_1)} \\
\pt_\Cati \arrow{r}[swap]{(\C,\D)} & \Cati \times \Cati
\end{tikzcd} \right) \]
in $\Cati$.  In other words, the space of objects of this pullback is (canonically) equivalent to that of the $\infty$-category $\Adjn(\C;\D)$.  However, note that morphisms of bicartesian fibrations are quite different from morphisms in $\Adjn(\C;\D)$: a map from an adjunction $F : \C \adjarr \D : G$ to an adjunction $F' : \C \adjarr \D : G'$ is given
\begin{itemizesmall}
\item in $\Adjn(\C;\D)$, by either a natural transformation $F' \ra F$ or a natural transformation $G \ra G'$, but
\item in $\biCartFib([1])$, a certain sort of \textit{commutative square} in $\Cati$.
\end{itemizesmall}
(So the latter is $(\infty,1)$-categorical, while the former is inherently $(\infty,2)$-categorical.)  In fact, it is not hard to see that the above pullback in $\Cati$ actually defines an $\infty$-groupoid: really, this is just a more elaborate version of the difference between $\Fun(\C,\D)$ and
\[ \lim \left( \begin{tikzcd}
& \coCartFib([1]) \arrow{d}{(\ev_0,\ev_1)} \\
\pt_\Cati \arrow{r}[swap]{(\C,\D)} & \Cati \times \Cati
\end{tikzcd} \right) . \]
\end{rem}

Despite \cref{bicart-1 is not Adjn}, we will have use for the following notation.

\begin{notn}\label{restricted coCart fibns over 1}
For $\C,\D \in \Cati$, we denote by $\coCartFib([1];\C,\D)$ the second pullback in \cref{bicart-1 is not Adjn}.  We will use analogous notation for the various variants of this construction (namely cartesian, relative co/cartesesian, and bicartesian fibrations over $[1]$).  For consistency, we will similarly write
\[ \Cati([1];\C,\D) = \lim \left( \begin{tikzcd}[column sep=1.25cm]
 & (\Cati)_{/[1]} \arrow{d}{(\ev_0,\ev_1)} \\
\pt_\Cati \arrow{r}[swap]{(\C,\D)} & \Cati \times \Cati
\end{tikzcd} \right) . \]
For any $\R_1,\R_2 \in \RelCati$, we also set
\[ \RelCati([1];\R_1,\R_2) = \lim \left( \begin{tikzcd}[column sep=1.25cm]
 & (\RelCati)_{/\min([1])} \arrow{d}{(\ev_0,\ev_1)} \\
\pt_\Cati \arrow{r}[swap]{(\R_1,\R_2)} & \RelCati \times \RelCati
\end{tikzcd} \right) . \]
\end{notn}

\begin{rem}\label{rewrite Nervei of Fun using coCart}
Using \cref{restricted coCart fibns over 1}, note that we can identify
\[ \Nervei( \Fun(\C,\D))_\bullet \simeq \coCartFib([1]; [\bullet] \times \C , \D)^\simeq . \]
This identification (and related ones) will be useful in the proof of \cref{parametrized localizns of Q adjns}.
\end{rem}

\section{The proofs of \cref{adjunction thm} and \cref{cor quillen equivce}}\label{section proofs of qadjns and qeqs}

This section is devoted to proving the results stated in \cref{subsection qadjns and qeqs}, namely
\begin{itemizesmall}
\item \cref{adjunction thm} -- that a Quillen adjunction has a canonical derived adjunction --, and
\item \cref{cor quillen equivce} -- that the derived adjunction of a Quillen equivalence is an adjoint equivalence.
\end{itemizesmall}

We begin with the following key result, the proof of which is based on that of \cite[Lemma 2.4.8]{BHH}.

\begin{lem}\label{cat of fibt replacements is weakly contractible}
Let $\M$ be a model $\infty$-category, and let $x \in \M$.  Then
\[ \left( \bW_{\M_{x/}}^f \right)^\gpd \simeq \pt_\S . \]
\end{lem}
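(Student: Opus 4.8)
The plan is to show that the $\infty$-category $\bW_{\M_{x/}}^f$ — whose objects are the weak equivalences $x \xra{\sim} \hat x$ out of $x$ with $\hat x \in \M^f$, and whose morphisms are the maps under $x$ (necessarily weak equivalences, by two-out-of-three) — has its identity functor connected, through a zigzag of natural transformations, to a constant functor. Since the groupoid completion functor $(-)^\gpd : \Cati \ra \S$ carries natural transformations to homotopies, this exhibits $\id_{(\bW_{\M_{x/}}^f)^\gpd}$ as homotopic to a constant self-map, which forces $(\bW_{\M_{x/}}^f)^\gpd \simeq \pt_\S$.

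First I would fix a functorial fibrant replacement on $\M$: an endofunctor $R : \M \ra \M$ with image in $\M^f$, together with a natural transformation $\eta : \id_\M \Rightarrow R$ that is objectwise a weak equivalence. In the model $1$-categorical case this is immediate from functorial factorization (of each $y \ra \pt_\M$); producing it for a general model $\infty$-category is the one genuinely $\infty$-categorical point, and is where the ``easy consequences of the theory of left-derivable $\infty$-categories'' flagged in the introduction enter — this is also the step modeled on \cite[Lemma 2.4.8]{BHH}. Granting $(R,\eta)$, define $R' : \bW_{\M_{x/}}^f \ra \bW_{\M_{x/}}^f$ on an object $(x \xra{w} \hat x)$ by $(x \xra{\eta_x} Rx \xra{Rw} R\hat x)$: the target is fibrant, and the structure map $Rw \circ \eta_x = \eta_{\hat x} \circ w$ is a weak equivalence by two-out-of-three. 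On a morphism $f$, set $R'(f) = Rf$; functoriality of $R'$ is inherited from that of $R$. Finally set $c_0 := (x \xra{\eta_x} Rx) \in \bW_{\M_{x/}}^f$.

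Now the maps $Rw : Rx \ra R\hat x$ are the components of a natural transformation $\const_{c_0} \Rightarrow R'$ — each is a morphism under $x$ by construction, and naturality at a morphism $f$ from $(x \xra{w_1} \hat x_1)$ to $(x \xra{w_2} \hat x_2)$ is the identity $R(f \circ w_1) = Rw_2$, which holds because $f \circ w_1 = w_2$ — while the maps $\eta_{\hat x} : \hat x \ra R\hat x$ are the components of a natural transformation $\id_{\bW_{\M_{x/}}^f} \Rightarrow R'$, for which ``morphism under $x$'' is the identity $\eta_{\hat x} \circ w = Rw \circ \eta_x$ and naturality is naturality of $\eta$. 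Each such component is a weak equivalence, hence a genuine morphism of $\bW_{\M_{x/}}^f$. Applying $(-)^\gpd$ to the resulting zigzag $\const_{c_0} \Rightarrow R' \Leftarrow \id_{\bW_{\M_{x/}}^f}$ yields homotopies exhibiting the identity of $(\bW_{\M_{x/}}^f)^\gpd$ as homotopic to a constant self-map, so that space is contractible.

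The one substantive step is the construction of $(R,\eta)$; everything after it is formal bookkeeping. If one wished to avoid functorial fibrant replacement altogether, an alternative would be to compare $\bW_{\M_{x/}}^f$ to the manifestly weakly contractible $\infty$-category $(\bW_\M)_{x/}$ of all weak equivalences out of $x$ (it has $\id_x$ as an initial object) by way of an $\infty$-categorical Quillen's Theorem A; but the comma $\infty$-categories that arise are again $\infty$-categories of fibrant replacements, so one is still forced to supply the same homotopical input in order to break the apparent circularity.
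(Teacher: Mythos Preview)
Your argument hinges entirely on the existence of a functorial fibrant replacement $(R,\eta)$ on $\M$, and this is precisely what is \emph{not} available for a general model $\infty$-category in the sense of this paper: functorial factorizations are not part of the axioms (note for instance that the proof of Kenny Brown's lemma explicitly omits the functoriality assertion from \cite[Lemma 7.7.1]{Hirsch}). You attribute the construction of $(R,\eta)$ to ``easy consequences of the theory of left-derivable $\infty$-categories,'' but Cisinski's theory does not provide a global functorial fibrant replacement; what \cite[Proposition 1.29]{CisCatDer} actually gives is a factorization in $\Fun(\C,\M)$ for $\C$ a \emph{finite direct category}. That is strictly weaker than what you need, and it is exactly what the paper uses --- but only after first invoking Cisinski's \emph{Lemme d'asph\'ericit\'e} to reduce the contractibility statement to the assertion that every functor out of such a $\C$ is connected by a zigzag to a constant one.

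In other words, the paper's argument and yours share the same endgame (a zigzag to a constant functor), but the paper's crucial move is the reduction to finite direct diagrams, which is what makes the appeal to left-derivable $\infty$-categories legitimate. Once reduced, the paper factors $F \we F' \fibn \pt$ with $F'$ fibrant on the boundaries, then takes $\lim_\C F'$ (fibrant, by \cite[Proposition 1.18]{CisCatDer}), factors $x \to \lim_\C F'$ as $x \we y \fibn \lim_\C F'$, and obtains the zigzag to $\const(y)$. Your proposed shortcut of building the zigzag directly on all of $\bW_{\M_{x/}}^f$ would indeed be simpler if $(R,\eta)$ existed, but absent that assumption it is a gap, not a proof.
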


\begin{proof}
By \cite[Lemme d'asph\'ericit\'e]{CisInv}, it suffices to show that for any finite directed set considered as a category $\C \in \strcat$, any functor $\C \ra \bW_{\M_{x/}}^f$ is connected to a constant functor by a zigzag of natural transformations in $\Fun(\C,\bW_{\M_{x/}}^f)$.\footnote{\cite[Lemme d'asph\'ericit\'e]{CisInv} can also be proved invariantly (i.e.\! without reference to quasicategories) by using the theory of complete Segal spaces and replacing Cisinski's appeal to the Quillen equivalence $\sd : s\Set_\KQ \adjarr s\Set_\KQ : \Ex$ and the functor $\Ex^\infty$ to their $\infty$-categorical variants (see \cref{sspaces:subsection Ex-infty}).}  Note that such a functor is equivalent to the data of
\begin{itemizesmall}
\item the composite functor $\C \ra \bW_{\M_{x/}}^f \ra \bW_\M^f$, which we will denote by $\C \xra{F} \bW_\M^f$, along with
\item a natural transformation $\const(x) \ra F$ in $\Fun(\C,\bW_\M)$.
\end{itemizesmall}

We now appeal to Cisinski's theory of \textit{left-derivable categories} introduced in \cite[\sec 1]{CisCatDer} (there called ``cat\'egories d\'erivables \`a gauche''), which immediately generalizes to a theory of \textit{left-derivable $\infty$-categories}: one simply replaces sets with spaces and categories with $\infty$-categories.\footnote{However, the notion of \textit{finite direct categories} (there called ``cat\'egories directes finies'') need not be changed.  Note that such categories are gaunt, so 1-categorical pushouts and pullbacks between them compute their respective $\infty$-categorical counterparts.}  Clearly the model $\infty$-category $\M$ is in particular a left-derivable $\infty$-category.  Hence, considering $\C \xra{F} \bW_\M^f \hookra \M$ as an object of $\Fun(\C,\M)$, by \cite[Proposition 1.29]{CisCatDer} there exists a factorization
\[ F \we F' \fibn \pt_{\Fun(\C,\M)} \simeq \const(\pt_\M) \]
of the terminal map in $\Fun(\C,\M)$, where $F \we F'$ is a componentwise weak equivalence and the map $F' \fibn \pt_{\Fun(\C,\M)}$ is a \textit{boundary fibration} (there called ``une fibration bord\'ee'').  In other words, $F'$ is \textit{fibrant on the boundaries} (there called ``fibrant sur les bords''), and in particular by \cite[Corollaire 1.24]{CisCatDer} it is objectwise fibrant.  Thus, we can consider $F \ra F'$ as a morphism in $\Fun(\C,\bW_\M^f)$, and hence for our main goal it suffices to assume that $F$ itself is fibrant on the boundaries.

Now, our map $\const(x) \ra F$ induces a canonical map $x \ra \lim_\C F$ in $\M$ (where this limit exists because $\M$ is finitely complete), and this map in turn admits a factorization
\[ x \we y \fibn {\lim}_\C F . \]
Moreover, \cite[Proposition 1.18]{CisCatDer} implies that $\lim_\C F \in \M$ is fibrant, and hence $y \in \M$ is fibrant as well.  Further, in the commutative diagram
\[ \begin{tikzcd}
\const(x) \arrow{r}{\approx} \arrow{d}[sloped, anchor=north]{\approx} & F \\
\const(y) \arrow{r} \arrow[dashed]{ru} & \const(\lim_\C F) \arrow[leftarrow, crossing over]{lu} \arrow{u}
\end{tikzcd} \]
in $\Fun(\C,\M)$, the dotted arrow is a componentwise weak equivalence by the two-out-of-three property (applied componentwise). This provides the desired zigzag connecting the object
\[ (\C \xra{F} \bW_\M^f , \const(x) \ra F) \in \Fun(\C,\bW_{\M_{x/}}^f ) \]
to a constant functor, namely the object
\[ (\C \xra{\const(y)} \bW_\M^f , \const(x) \ra \const(y)) \in \Fun(\C,\bW_{\M_{x/}}^f ) , \]
which proves the claim.
\end{proof}

This has the following convenient consequence.

\begin{lem}\label{inclusion of fibrants and weak equivalences into weak equivalences is a weak homotopy equivalence}
For any model $\infty$-category $\M$, the inclusion $\bW^f \hookra \bW$ induces an equivalence under the functor $(-)^\gpd : \Cati \ra \S$.
\end{lem}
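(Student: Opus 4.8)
The plan is to deduce this from the $\infty$-categorical form of Quillen's Theorem A, feeding in \cref{cat of fibt replacements is weakly contractible} as the essential input. Recall that this form of Theorem A states that if $\C \xra{F} \D$ is a functor of $\infty$-categories such that for every object $d \in \D$ the comma $\infty$-category $\C \times_\D \D_{d/}$ satisfies $(\C \times_\D \D_{d/})^\gpd \simeq \pt_\S$, then $F$ induces an equivalence $\C^\gpd \xra{\sim} \D^\gpd$. (This can be obtained from the classical statement by the same complete Segal space manipulations used to reduce \cite[Lemme d'asph\'ericit\'e]{CisInv} to the quasicategorical setting, or simply cited from the literature.)

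I would apply this with $F$ taken to be the inclusion $\bW^f \hookra \bW$. The only thing to check is that its comma $\infty$-categories are weakly contractible, and here the work is purely bookkeeping: for any object $x$ of $\bW$ -- equivalently, any $x \in \M$ -- there is a canonical equivalence
\[ \bW^f \underset{\bW}{\times} \bW_{x/} \ \simeq \ \bW^f_{\M_{x/}} . \]
Indeed, unwinding the identification $\bW_{\M_{x/}} \simeq (\bW_\M)_{x/}$, an object of either $\infty$-category is a weak equivalence $x \we y$ with $y \in \M$ fibrant, and a morphism is a weak equivalence between such objects compatible with the structure maps out of $x$.

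It then remains only to invoke \cref{cat of fibt replacements is weakly contractible}, which supplies
\[ \left( \bW^f_{\M_{x/}} \right)^\gpd \simeq \pt_\S \]
for every $x$; the hypotheses of Theorem A are thereby verified, and we conclude that $\bW^f \hookra \bW$ induces an equivalence under $(-)^\gpd : \Cati \ra \S$. With \cref{cat of fibt replacements is weakly contractible} in hand the argument is immediate, so there is no real obstacle; the only genuine subtleties are getting the variance of the comma $\infty$-categories right -- one wants the \emph{under}categories $\D_{d/}$, reflecting the fact that \cref{cat of fibt replacements is weakly contractible} concerns fibrant replacements \emph{under} a fixed object -- and making sure to cite a version of Theorem A valid for $\infty$-categories rather than merely for $1$-categories.
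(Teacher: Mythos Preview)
Your argument is correct and essentially identical to the paper's own proof: the paper likewise identifies $\bW^f \times_\bW \bW_{x/} \simeq \bW^f_{\M_{x/}}$, invokes \cref{cat of fibt replacements is weakly contractible} together with Theorem~A to conclude finality, and then deduces the equivalence on groupoid completions. The only cosmetic difference is that the paper phrases Theorem~A as giving finality and then separately cites the fact that final functors induce equivalences under $(-)^\gpd$, whereas you fold these into a single statement.
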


\begin{proof}
This functor is final by Theorem A (\Cref{gr:theorem A}) and \cref{cat of fibt replacements is weakly contractible}; note that for an object $x \in \bW$, we have an identification
\[ \bW^f \times_\bW \bW_{x/} \simeq \bW_{\M_{x/}}^f . \]
Hence, the assertion follows from \cref{gr:final functor is an equivalence on groupoid-completions}.
\end{proof}

In turn, this allows us to prove the following pair of results, which we will need in the proof of \cref{adjunction thm}.

\begin{prop}\label{inclusion of fibts induces equivce on rezk nerves}
For any model $\infty$-category $\M$, the inclusion $(\M^f,\bW^f) \hookra (\M,\bW)$ induces an equivalence
\[ \NerveRezki(\M^f,\bW^f) \ra \NerveRezki(\M,\bW) \]
in $s\S$.
\end{prop}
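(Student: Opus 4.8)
===PROOF PROPOSAL===

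The plan is to deduce the statement for Rezk nerves from the already-established groupoid-completion statement (\cref{inclusion of fibrants and weak equivalences into weak equivalences is a weak homotopy equivalence}) by checking the equivalence levelwise in the simplicial direction. Recall that the Rezk nerve $\NerveRezki(\M,\bW)$ has $n$-simplices given (up to the relevant completion/fibrant replacement in $s\S$) by the groupoid-completion of the $\infty$-category of $n$-tuples of composable weak equivalences in $\M$ -- more precisely, by the moduli space of functors $\min([n]) \to (\M,\bW)$, i.e. $\left(\Fun([n],\bW)\right)^\gpd$, or in the appropriate relative-category formulation $\left(\Fun(\min([n]),\M)^\bW\right)^\gpd$. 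So it suffices to show that for each $n \geq 0$, the inclusion induces an equivalence
\[ \left( \Fun([n],\bW^f) \right)^\gpd \xra{\sim} \left( \Fun([n],\bW) \right)^\gpd . \]

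First I would reduce to this levelwise statement: since equivalences in $s\S$ are detected objectwise, and since the Rezk nerve construction is functorial and its value at $[n]$ is (equivalent to) the indicated groupoid-completion, the claim follows once each of these maps is an equivalence. Second, I would observe that $\Fun([n],\bW^f)$ is exactly the $\infty$-category of $n$-fold composable weak equivalences all of whose objects are fibrant; equivalently, it is the subcategory of weak equivalences of the Reedy (= projective, by \cref{ex reedy structure on n}) model $\infty$-category $\Fun([n],\M)_\Reedy$ lying in the full subcategory on objectwise-fibrant diagrams. But in the Reedy model structure on $\Fun([n],\M)$ with $[n]$ given the Reedy structure of \cref{ex reedy structure on n}, a diagram is fibrant if and only if it is objectwise fibrant (the matching maps reduce to the individual objects since the inverse subcategory is trivial), so $\Fun([n],\bW^f)$ is precisely $\bW^f_{\Fun([n],\M)_\Reedy}$, the weak equivalences between fibrant objects of the model $\infty$-category $\Fun([n],\M)_\Reedy$. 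Third, I would then apply \cref{inclusion of fibrants and weak equivalences into weak equivalences is a weak homotopy equivalence} to the model $\infty$-category $\Fun([n],\M)_\Reedy$ itself: this immediately gives that $\bW^f_{\Fun([n],\M)_\Reedy} \hookra \bW_{\Fun([n],\M)_\Reedy} = \Fun([n],\bW)$ induces an equivalence on groupoid-completions, which is exactly what we want.

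The main obstacle I anticipate is bookkeeping around the precise model of the Rezk nerve and identifying its $n$-simplices correctly with $\left(\Fun([n],\bW)\right)^\gpd$ versus the objectwise-fibrant variant -- in particular making sure the inclusion of Rezk nerves really is computed levelwise by the maps above, and that no hidden completion in $s\S$ interferes (it won't, since completion is a localization and preserves objectwise equivalences). A secondary point to be careful about is the identification "Reedy-fibrant in $\Fun([n],\M)_\Reedy$ $\iff$ objectwise fibrant", which hinges on the inverse subcategory of $[n]$ (with the degree function $\deg(i)=i$) consisting only of identities, so that every matching category is empty and every matching object is terminal; this is exactly the content of \cref{ex reedy structure on n} and requires no further work. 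Once these identifications are in place, the proof is a one-line appeal to the preceding lemma applied to $\Fun([n],\M)_\Reedy$, quantified over $n$.
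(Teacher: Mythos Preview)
Your proposal is correct and follows essentially the same approach as the paper: both reduce to a levelwise check, identify the level-$n$ map with the inclusion $\bW^f_{\Fun([n],\M)_\projective} \hookra \bW_{\Fun([n],\M)_\projective}$ (you say Reedy, the paper says projective, but as you note these coincide by \cref{ex reedy structure on n}), and then invoke \cref{inclusion of fibrants and weak equivalences into weak equivalences is a weak homotopy equivalence} applied to $\Fun([n],\M)_\projective$.
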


\begin{proof}
We must show that for every $n \geq 0$, the map
\[ \preNerveRezki(\M^f,\bW^f)_n \ra \preNerveRezki(\M,\bW)_n \]
in $\Cati$ becomes an equivalence upon applying $(-)^\gpd : \Cati \ra \S$.  By definition, this is the map
\[ \Fun([n],(\M^f,\bW^f))^\bW \ra \Fun([n],(\M,\bW))^\bW . \]
But this is precisely the inclusion
\[ \bW^f_{\Fun([n],\M)_\projective} \hookra \bW_{\Fun([n],\M)_\projective} , \]
which becomes an equivalence upon groupoid completion by \cref{inclusion of fibrants and weak equivalences into weak equivalences is a weak homotopy equivalence}.
\end{proof}

\begin{cor}\label{inclusion of fibts induces equivce on gpd-compln}
For any model $\infty$-category $\M$, the inclusion $(\M^f,\bW^f) \hookra (\M,\bW)$ is a weak equivalence in $(\RelCati)_\BarKan$, i.e.\! it induces an equivalence
\[ \loc{\M^f}{(\bW^f)} \xra{\sim} \loc{\M}{\bW} \]
in $\Cati$.
\end{cor}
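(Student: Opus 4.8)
The plan is to deduce this immediately from \cref{inclusion of fibts induces equivce on rezk nerves}, together with the fact that the Rezk nerve detects weak equivalences in $(\RelCati)_\BarKan$. Recall from \cite{MIC-rnerves} that the localization functor $\RelCati \xra{\locL} \Cati$ factors, up to canonical equivalence, through the Rezk nerve $\RelCati \xra{\NerveRezki} s\S$ followed by the canonical functor $s\S \ra \Cati$ (which sends a simplicial space to the $\infty$-category it presents); in particular, a relative functor inducing an equivalence of Rezk nerves in $s\S$ is a weak equivalence in $(\RelCati)_\BarKan$, and induces an equivalence on localizations.

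With this in hand, the proof is short. By \cref{inclusion of fibts induces equivce on rezk nerves}, the inclusion $(\M^f,\bW^f) \hookra (\M,\bW)$ induces an equivalence $\NerveRezki(\M^f,\bW^f) \xra{\sim} \NerveRezki(\M,\bW)$ in $s\S$; indeed, that proposition establishes the stronger statement that this is a levelwise equivalence of simplicial spaces, which a fortiori remains an equivalence after applying the canonical functor $s\S \ra \Cati$. Hence the inclusion is a weak equivalence in $(\RelCati)_\BarKan$, and applying $\locL$ yields the asserted equivalence $\loc{\M^f}{(\bW^f)} \xra{\sim} \loc{\M}{\bW}$ in $\Cati$.

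There is essentially no obstacle remaining: all of the real content has already been extracted, in \cref{cat of fibt replacements is weakly contractible}, \cref{inclusion of fibrants and weak equivalences into weak equivalences is a weak homotopy equivalence}, and \cref{inclusion of fibts induces equivce on rezk nerves}. The only point requiring any care is the bookkeeping relating the three equivalent formulations present in the statement --- being a $(\RelCati)_\BarKan$-weak equivalence, inducing an equivalence of Rezk nerves, and inducing an equivalence on localizations --- but this is precisely the content recalled above from \cite{MIC-rnerves}, and for the present purpose we in fact only need the easy implication from an equivalence of Rezk nerves to an equivalence on localizations.
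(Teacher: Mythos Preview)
Your proposal is correct and follows essentially the same approach as the paper: the paper's proof is a single sentence invoking \cref{inclusion of fibts induces equivce on rezk nerves} together with the global universal property of the Rezk nerve (\cref{rnerves:Rezk nerve computes localization}), which is exactly the content you recall from \cite{MIC-rnerves}. Your version is more verbose but the logic is identical.
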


\begin{proof}
This follows from \cref{inclusion of fibts induces equivce on rezk nerves} and the global universal property of the Rezk nerve (\cref{rnerves:Rezk nerve computes localization}).
\end{proof}

We now give one more easy result which we will need in the proof of \cref{adjunction thm}, which we refer to as \bit{Kenny Brown's lemma} (for model $\infty$-categories).

\begin{lem}\label{kenny brown}
Let $\M$ be a model $\infty$-category, and let $(\R,\bW_\R) \in \RelCati$ be a relative $\infty$-category such that $\bW_\R \subset \R$ has the two-out-of-three property.  If $\M \ra \R$ is any functor of underlying $\infty$-categories which takes the subcategory $(\bW \cap \bC)^c_\M \subset \M$ into $\bW_\R \subset \R$, then it also takes the subcategory $\bW^c_\M \subset \M$ into $\bW_\R \subset \C$.
\end{lem}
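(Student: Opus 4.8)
The plan is to transcribe the classical proof of Kenny Brown's lemma essentially verbatim, the only adjustments being that composites and identities are now witnessed up to coherent equivalence rather than on the nose; this causes no trouble, since $F$ is an honest functor of $\infty$-categories and the relevant two-out-of-three assertions only see the homotopy category.

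First I would fix a morphism $f : x \we y$ of $\bW^c_\M$, so that $x$ and $y$ are both cofibrant. Since $\M$ is finitely bicomplete the coproduct $x \coprod y$ exists; let $x \coprod y \to y$ denote the map whose components along the two structure maps are $f$ and $\id_y$. As $\es_\M \to x$ and $\es_\M \to y$ are cofibrations and cofibrations are closed under pushout and composition, both structure maps $x \to x \coprod y$ and $y \to x \coprod y$ are cofibrations and $x \coprod y$ is itself cofibrant. Applying the factorization axiom to $x \coprod y \to y$, I would write it (up to equivalence) as a composite $x \coprod y \cofibn z \xra{p} y$ with $p$ an acyclic fibration, and then take $i : x \to z$ and $j : y \to z$ to be the composites of the two structure maps with $x \coprod y \cofibn z$. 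Then $i$ and $j$ are cofibrations, $z$ is cofibrant (being the target of the cofibration $\es_\M \to x \coprod y \to z$), and $p \circ i \simeq f$ while $p \circ j \simeq \id_y$.

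Next I would harvest weak equivalences via two-out-of-three in $\M$: since $p \in \bW$, the relations $p \circ i \simeq f$ and $p \circ j \simeq \id_y$ force $i, j \in \bW$. Thus $i$ and $j$ are acyclic cofibrations between cofibrant objects, i.e.\ morphisms of $(\bW \cap \bC)^c_\M$, so by hypothesis $F(i), F(j) \in \bW_\R$. Applying $F$ to the triangle $p \circ j \simeq \id_y$ and using $F(\id_y) \simeq \id_{F(y)} \in \bW_\R$, the two-out-of-three property of $\bW_\R$ forces $F(p) \in \bW_\R$. Finally, applying $F$ to $p \circ i \simeq f$ exhibits $F(f) \simeq F(p) \circ F(i)$ as a composite of morphisms of the subcategory $\bW_\R$, hence itself a morphism of $\bW_\R$ --- which is exactly the claim.

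I do not expect a genuine obstacle here: the argument is a formal diagram chase, and the passage from $1$-categories to $\infty$-categories costs nothing beyond keeping track of the equivalences $p \circ i \simeq f$ and $p \circ j \simeq \id_y$ (supplied together with the factorization) and their images under $F$ (supplied by functoriality). If one wished to avoid choosing these $2$-cells by hand, one could instead package the entire construction as a diagram on a small finite direct category --- of the sort appearing in Cisinski's theory of left-derivable $\infty$-categories used in the proof of \cref{cat of fibt replacements is weakly contractible} --- but for a single morphism the hands-on argument is simpler.
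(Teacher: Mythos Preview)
Your proof is correct and follows essentially the same approach as the paper: the paper simply asserts the existence of the diagram $x \wcofibn z \wfibn y$ together with an acyclic-cofibration section $y \wcofibn z$ by citing \cite[Lemma 7.7.1]{Hirsch}, whereas you have spelled out the standard construction of that diagram via the factorization of $(f,\id_y) : x \coprod y \to y$. The subsequent two-out-of-three argument is identical.
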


\begin{proof}
Given any map $x \we y$ in $\bW^c_\M \subset \M$, we can construct a diagram
\[ \begin{tikzcd}
x \arrow{rr}{\approx} \arrow[dashed, tail]{rd}[sloped, swap, pos=0.6]{\approx} & & y \arrow[bend left=40, dashed, tail]{ld}[sloped, pos=0.6]{\approx} \\
& z \arrow[dashed, two heads]{ru}[sloped, pos=0.6]{\approx}
\end{tikzcd} \]
in $\M$, i.e.\! a factorization of the chosen map and a section of the second map which are contained in the various subcategories defining the model structure on $\M$ as indicated, exactly as in \cite[Lemma 7.7.1]{Hirsch} (only omitting the assertion of functoriality).  Hence, our functor $\M \ra \R$ must take our chosen map into $\bW_\R \subset \R$ since this subcategory contains all the equivalences, has the two-out-of-three property, and is closed under composition.  This proves the claim.
\end{proof}

We now turn to this section's primary goal.

\begin{proof}[Proof of \cref{adjunction thm}]
Let $(\M + \N) \ra [1]$ denote the bicartesian fibration corresponding to the underlying adjunction $F \adj G$ of the given Quillen adjunction.  Let us equip this with the subcategory of weak equivalences inherited from $\bW_\M \subset \M$ and $\bW_\N \subset \N$; its structure map can then be considered as a map to $\min([1])$ in $\RelCati$.\footnote{Note that this will not generally make this map into a relative cocartesian fibration or a relative cartesian fibration: left and right Quillen functors are not generally functors of relative $\infty$-categories.}  Let us define full relative subcategories
\[ (\M^c + \N^f),(\M^c+\N),(\M+\N^f) \subset (\M+\N) \]
(which inherit maps to $\min([1])$) by restricting to the cofibrant objects of $\M$ and/or to the fibrant objects of $\N$, as indicated by the notation.  Moreover, let us define the functors $F^c$ and $G^f$ to be the composites
\[ \begin{tikzcd}
\M^c \arrow[hook]{r} \arrow[bend left=45]{rr}{F^c} &
\M
{\arrow[transform canvas={yshift=0.7ex}]{r}{F}[swap, transform canvas={yshift=0.25ex}]{\scriptstyle \bot} \arrow[transform canvas={yshift=-0.7ex}, leftarrow]{r}[swap]{G}}
& \N \arrow[hookleftarrow]{r}
& \N^f . \arrow[bend left=45]{ll}{G^f}
\end{tikzcd} \]
Note that $F^c$ and $G^f$ both preserve weak equivalences by Kenny Brown's lemma (\ref{kenny brown}).  It follows that we have a canonical equivalence
\[ (\M^c + \N) \simeq \Gr_\Rel(F^c) \]
in $\coCartFib_\Rel([1])$ and a canonical equivalence
\[ (\M + \N^f) \simeq \Gr^-_\Rel(G^f) \]
in $\CartFib_\Rel([1])$.  By \cref{fiberwise localization} (and its dual), it follows that
\[ \locL(\M^c + \N) \simeq \Gr(\locL \circ F^c) \]
in $\coCartFib([1])$ and that
\[ \locL(\M + \N^f) \simeq \Gr^-(\locL \circ G^f) \]
in $\CartFib([1])$.

Now, by \cref{inclusions of collages are weak equivalences}, the canonical inclusions induce weak equivalences
\[ (\M^c + \N) \xla{\approx} (\M^c + \N^f) \xra{\approx} (\M + \N^f) \]
in $((\RelCati)_{/\min([1])})_\BarKan$.  Hence, applying $\RelCati \xra{\locL} \Cati$ yields a diagram
\[ \Gr(\locL \circ F^c) \simeq \locL(\M^c+\N) \xla{\sim} \locL(\M^c+\N^f) \xra{\sim} \locL(\M+\N^f) \simeq \Gr^-(\locL \circ G^f) \]
in $(\Cati)_{/[1]}$, so that in particular the map $\locL(\M^c + \N^f) \ra [1]$ is a bicartesian fibration (which as a cocartesian fibration corresponds to $F^c$ while as a cartesian fibration corresponds to $G^f$).  Appealing to \cref{inclusion of fibts induces equivce on gpd-compln} (and its dual), we then obtain a diagram
\[ \begin{tikzcd}
\loc{\M}{\bW_\M} & {\loc{\M^c}{(\bW_\M^c)}} \arrow{l}[swap]{\sim} \arrow[hookrightarrow]{r} \arrow{d} & \locL(\M^c+\N^f) \arrow[hookleftarrow]{r} \arrow{d} & \loc{\N^f}{ (\bW_\N^f)} \arrow{r}{\sim} \arrow{d} & \loc{\N}{\bW_\N} \\
& \{0\} \arrow[hookrightarrow]{r} & {[1]} \arrow[hookleftarrow]{r} & \{1\}
\end{tikzcd} \]
in which the squares are fiber inclusions and which, upon making choices of inverses for the equivalences (the spaces of which are contractible), selects the desired adjunction.
\end{proof}

We now prove a key result which we needed in the proof of \cref{adjunction thm}.

\begin{lem}\label{inclusions of collages are weak equivalences}
The inclusions
\[ (\M^c+\N) \hookla (\M^c+\N^f) \hookra (\M + \N^f) \]
are weak equivalences in $((\RelCati)_{/\min([1])})_\BarKan$.
\end{lem}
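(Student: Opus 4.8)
The plan is to reduce each of the two inclusions to an application of the "fibrant replacement" machinery already developed, namely Corollary~\ref{inclusion of fibts induces equivce on gpd-compln} (and its dual for cofibrant objects), applied fiberwise over $\min([1])$. The key point is that a morphism in $(\RelCati)_{/\min([1])}$ is a weak equivalence in the Bousfield--Kan structure if and only if, after applying $\locL$, it becomes an equivalence in $(\Cati)_{/[1]}$; and a map of $\infty$-categories over $[1]$ is an equivalence exactly when it is an equivalence on each of the two fibers. So for each of the two inclusions I would check the two fiber conditions separately.

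First consider $(\M^c + \N^f) \hookra (\M + \N^f)$. Over the object $1 \in [1]$, this inclusion is the identity on the fiber $\N^f$ (with its weak equivalences $\bW_\N^f$), hence certainly a weak equivalence there. Over $0 \in [1]$, it is the inclusion $(\M^c, \bW_\M^c) \hookra (\M, \bW_\M)$, which is a weak equivalence in $(\RelCati)_\BarKan$ by the dual of Corollary~\ref{inclusion of fibts induces equivce on gpd-compln}. Dually, for $(\M^c + \N^f) \hookra (\M^c + \N)$: over $0$ it is the identity on $(\M^c, \bW_\M^c)$, and over $1$ it is the inclusion $(\N^f, \bW_\N^f) \hookra (\N, \bW_\N)$, a weak equivalence by Corollary~\ref{inclusion of fibts induces equivce on gpd-compln} itself. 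Thus in both cases the inclusion restricts to a weak equivalence of relative $\infty$-categories on each fiber over $\min([1])$.

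The main obstacle is the passage from "weak equivalence on each fiber" to "weak equivalence in $((\RelCati)_{/\min([1])})_\BarKan$." Here I would argue as follows: applying $\locL$ to the inclusion yields a map in $(\Cati)_{/[1]}$; by Proposition~\ref{fiberwise localization} the localization of a relative (co)cartesian fibration is computed fiberwise (its fibers are the localizations of the fibers), and more elementarily, for any map of relative $\infty$-categories over $\min([1])$ the fibers of $\locL$ over $0$ and $1$ are the localizations of the corresponding fibers. Since $\locL$ applied to each fiber inclusion is an equivalence of $\infty$-categories by the two invocations of Corollary~\ref{inclusion of fibts induces equivce on gpd-compln} above, the map $\locL(\M^c + \N^f) \to \locL(\M + \N^f)$ is an equivalence on both fibers over $[1]$, hence an equivalence in $(\Cati)_{/[1]}$; the same for the other inclusion. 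By the universal property characterizing $\BarKan$-weak equivalences among maps over $\min([1])$ (a map is a $\BarKan$-weak equivalence precisely when $\locL$ carries it to an equivalence), this shows both inclusions are weak equivalences in $((\RelCati)_{/\min([1])})_\BarKan$, as claimed. The one subtlety to be careful about is that these inclusions are \emph{not} maps of relative (co)cartesian fibrations in the strong sense — left/right Quillen functors need not preserve weak equivalences — so the fiberwise reduction must be carried out at the level of the underlying relative $\infty$-categories over $\min([1])$ and the total-category statement extracted from the fiberwise one via the localization functor, rather than by invoking the Grothendieck-construction equivalence directly.
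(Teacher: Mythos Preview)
There is a genuine gap. Your reduction rests on two claims that both fail here. First, you assert that ``a map of $\infty$-categories over $[1]$ is an equivalence exactly when it is an equivalence on each of the two fibers.'' This is false without further hypotheses: the inclusion $\{0\}\sqcup\{1\}\hookra[1]$ is an equivalence on fibers but not an equivalence. The statement becomes true if both sides are cocartesian (or cartesian) fibrations and the map preserves cocartesian (or cartesian) morphisms, but that is exactly what is unavailable. The map $(\M^c+\N^f)\ra[1]$ is \emph{not} a cocartesian fibration even on underlying $\infty$-categories: a cocartesian lift of $0\ra 1$ at $m\in\M^c$ would have target $F(m)$, which need not lie in $\N^f$. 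Symmetrically it is not a cartesian fibration. So you have no structural control over $\locL(\M^c+\N^f)\ra[1]$, and the fiberwise check does not conclude.

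Second, you claim that the fibers of $\locL$ over $0$ and $1$ are the localizations of the fibers. This is precisely the content of \cref{fiberwise localization}, which requires a \emph{relative} co/cartesian fibration as input; you cannot invoke it for $(\M^c+\N^f)$. Your final paragraph acknowledges the difficulty but does not resolve it. The paper's proof avoids these issues entirely: it shows the map on Rezk nerves is a levelwise equivalence by decomposing $\Fun([n],-)^{\bW}$ over the possible composites $[n]\ra[1]$. The fiber over a constant map is handled by \cref{inclusion of fibts induces equivce on rezk nerves}; the fiber over a nonconstant map (the genuinely new case, where both $\M$- and $\N$-data interact) requires a Quillen Theorem~A argument together with \cref{cat of fibt replacements is weakly contractible}, applied to the projective model structure on $\Fun([j],\N)$, to show a certain comma $\infty$-category has contractible groupoid completion. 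This ``mixed simplex'' analysis is the actual content of the lemma, and your proposal does not address it.
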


\begin{proof}
We will show that the inclusion
\[ (\M^c+\N) \hookla (\M^c+\N^f) \]
is a weak equivalence in $((\RelCati)_{/\min([1])})_\BarKan$; the other weak equivalence follows from a dual argument.  By the global universal property of Rezk nerve (\cref{rnerves:Rezk nerve computes localization}), it suffices to show that applying the functor $\RelCati \xra{\NerveRezki} s\S$ to this map yields an equivalence.  This is equivalent to showing that for every $n \geq 0$, the map
\[ \preNerveRezki(\M^c+\N^f)_n \ra \preNerveRezki(\M^c+\N)_n \]
in $\Cati$ becomes an equivalence upon groupoid completion.  By definition, this is the postcomposition map
\[ \Fun([n],(\M^c+\N^f))^\bW \ra \Fun([n],(\M^c+\N))^\bW . \]

Now, observe that since neither $(\M^c + \N^f)$ nor $(\M^c+\N)$ has any weak equivalences covering the unique non-identity map of $[1]$, then these $\infty$-categories decompose as coproducts (in $\Cati$) over the set of possible composite maps $[n] \ra (\M^c + \N^{(f)}) \ra [1]$, and moreover the map between them respects these decompositions.  Thus, it suffices to show that for each choice of structure map $[n] \ra [1]$, the resulting map
\[ \Fun_{/[1]}([n],(\M^c+\N^f))^\bW \ra \Fun_{/[1]}([n],(\M^c+\N))^\bW \]
in $\Cati$ becomes an equivalence upon applying $(-)^\gpd : \Cati \ra \S$.

First of all, we obtain an equivalence of fibers over the constant map $[n] \xra{\const(0)} [1]$.  Moreover, over the constant map $[n] \xra{\const(1)} [1]$, the above map reduces to
\[ \preNerveRezki(\N^f,\bW^f_\N)_n \ra \preNerveRezki(\N,\bW_\N)_n , \]
in which situation the result follows from \cref{inclusion of fibts induces equivce on rezk nerves}.  Thus, let us restrict our attention to the intermediate cases, supposing that our structure map $[n] \ra [1]$ is given by $0,\ldots, i \mapsto 0$ and $i+1,\ldots,n \mapsto 1$, where $0 \leq i < n$.  Let us write $j = n-(i+1)$.  Then, we can reidentify these $\infty$-categories as
\[ \C^{c,f} = \Fun_{/[1]}([n],(\M^c+\N^f))^\bW \simeq \lim \left( \begin{tikzcd}
& & \Fun([j],\N^f)^\bW \arrow{d}{\{0\}} \\
& \Fun([1],\N)^\bW \arrow{r}[swap]{\{1\}} \arrow{d}{\{0\}} & \bW_\N \\
\Fun([i],\M^c)^\bW \arrow{r}[swap]{F^c \circ \{i\}} & \bW_\N
\end{tikzcd} \right) \]
and
\begin{align*}
\C^c = \Fun_{/[1]}([n],(\M^c+\N))^\bW & \simeq \lim \left( \begin{tikzcd}[ampersand replacement=\&]
\& \& \Fun([j],\N)^\bW \arrow{d}{\{0\}} \\
\& \Fun([1],\N)^\bW \arrow{r}[swap]{\{1\}} \arrow{d}{\{0\}} \& \bW_\N \\
\Fun([i],\M^c)^\bW \arrow{r}[swap]{F^c \circ \{i\}} \& \bW_\N
\end{tikzcd} \right) \\
& \simeq \lim \left( \begin{tikzcd}[ampersand replacement=\&]
\& \Fun([j+1],\N)^\bW \arrow{d}{\{0\}} \\
\Fun([i],\M^c)^\bW \arrow{r}[swap]{F^c \circ \{i\}} \& \bW_\N
\end{tikzcd} \right)
\end{align*}
(with the evident map between them).  By Theorem A (\Cref{gr:theorem A}) and \cref{gr:final functor is an equivalence on groupoid-completions}, it suffices to show that for any object
\[ x = \left( (m_0 \ra \cdots \ra m_i) , (F(m_i) \ra n_0) , (n_0 \ra \cdots \ra n_j) \right) \in \C^c , \]
the resulting comma $\infty$-category
\[ \D = \C^{c,f} \underset{\C^c} (\C^c)_{x/} \]
has that $\D^\gpd \simeq \pt_\S$.

Let us write $x|_\N = (n_0 \ra \cdots \ra n_j) \in \Fun([j],\N)^\bW$, and using this let us define the $\infty$-category $\E$ via the commutative diagram
\[ \begin{tikzcd}[column sep=2cm]
& & \E \arrow{rd} \arrow{dd} \\
\D \arrow{rru} \arrow{r} \arrow{dd} & \C^{c,f} \arrow[crossing over]{rr} & & \Fun([j],\N^f)^\bW \arrow{dd} \\
& & \left( \Fun([j],\N)^\bW \right)_{(x|_\N)/} \arrow{rd} \\
(\C^c)_{x/} \arrow{rru} \arrow{r} & \C^c \arrow{rr} \arrow{dddd} \arrow[leftarrow, crossing over]{uu} & & \Fun([j],\N)^\bW \arrow{dd}{\{0\}} \\ \\
& & \Fun([1],\N)^\bW \arrow{r}[swap]{\{1\}} \arrow{dd}{\{0\}} & \bW_\N \\ \\
& \Fun([i],\M^c)^\bW \arrow{r}[swap]{F^c \circ \{i\}} & \bW_\N
\end{tikzcd} \]
in which all of $\C^{c,f}$, $\D$, and $\E$ are defined as pullbacks (which is what provides the functor $\D \ra \E$).  By applying \cref{cat of fibt replacements is weakly contractible} to the model $\infty$-category $\Fun([j],\N)_\projective$ and the object $x|_\N \in \Fun([j],\N)$, we obtain that $\E^\gpd \simeq \pt_\S$.  Moreover, unwinding the definitions, we see that the functor $\D \ra \E$ is a right adjoint, with left adjoint given by taking the object
\[ \left( \begin{tikzcd}
n_0 \arrow{r} \arrow{d}{\nu_0}[sloped, anchor=north]{\approx} & \cdots \arrow{r} & n_j \arrow{d}[swap]{\nu_j}[sloped, anchor=south]{\approx} \\
n_0' \arrow{r} & \cdots \arrow{r} & n_j'
\end{tikzcd} \right) \in \E \]
to the object
\[ \left(
\left( \begin{tikzcd}
m_0 \arrow{r} \arrow{d}{\id_{m_0}}[sloped, anchor=north]{\approx} & \cdots \arrow{r} & m_i \arrow{d}[swap]{\id_{m_i}}[sloped, anchor=south]{\approx} \\
m_0 \arrow{r} & \cdots \arrow{r} & m_i
\end{tikzcd} \right)
,
\left( \begin{tikzcd}
F(m_i) \arrow{r} \arrow{d}{\id_{F(m_i)}}[sloped, anchor=north]{\approx} & n_0 \arrow{d}[swap]{\nu_0}[sloped, anchor=south]{\approx} \\
F(m_i) \arrow{r} & n_0'
\end{tikzcd} \right)
,
\left( \begin{tikzcd}
n_0 \arrow{r} \arrow{d}{\nu_0}[sloped, anchor=north]{\approx} & \cdots \arrow{r} & n_j \arrow{d}[swap]{\nu_j}[sloped, anchor=south]{\approx} \\
n_0' \arrow{r} & \cdots \arrow{r} & n_j'
\end{tikzcd} \right)
\right) \in \D \]
and acting in the expected way on morphisms.\footnote{Rather than exhibit all of the necessary coherences, this existence of this adjunction can be deduced via (the dual of) Proposition T.5.2.2.8 from the evident counit transformation.}  Hence, by \cref{rnerves:adjns induce equivces on gpd-complns}, it follows that $\D^\gpd \simeq \pt_\S$ as well.  This proves the claim.
\end{proof}

Building on the proof of \cref{adjunction thm}, we can now prove \cref{cor quillen equivce}.

\begin{proof}[Proof of \cref{cor quillen equivce}]
We will prove that the unit of the derived adjunction
\[ \bbL F : \loc{\M}{\bW_\M} \adjarr \loc{\N}{\bW_\N} : \bbR G \]
is a natural equivalence; that its counit is also a natural equivalence will follow from a dual argument.  For this, choose any $x \in \loc{\M}{\bW_\M}$, and choose any cofibrant representative $\tilde{x} \in \M^c$.  Then by \cref{adjunction thm}, $F(\tilde{x}) \in \N$ represents $(\bbL F)(x) \in \loc{\N}{\bW_\N}$.  Let us choose any fibrant replacement
\[ F(\tilde{x}) \we \bbR (F(\tilde{x})) \fibn \pt_\N \]
in $\N$.  Then, again by \cref{adjunction thm}, $G(\bbR(F(\tilde{x}))) \in \M$ represents $(\bbR G)((\bbL F)(x)) \in \loc{\M}{\bW_\M}$.  Moreover, it follows from the proof of \cref{adjunction thm} that the unit map of $\bbL F \adj \bbR G$ at $x \in \loc{\M}{\bW_\M}$ is represented by the composite map
\[ \tilde{x} \xra{\eta^{F \adj G}_{\tilde{x}}} G(F(\tilde{x})) \ra G(\bbR(F(\tilde{x}))) \]
in $\M$.  As this composite map is adjoint to the original weak equivalence $F(\tilde{x}) \we \bbR(F(\tilde{x}))$ in $\N$, it must itself be a weak equivalence in $\M$ since $F \adj G$ is a Quillen equivalence.  So the unit of the adjunction $\bbL F \adj \bbR G$ is indeed a natural equivalence.
\end{proof}

\section{Two-variable Quillen adjunctions}\label{section two-var qadjns}

Recall that a model $\infty$-category $\M$ may be thought of as a \textit{presentation} of its localization $\loc{\M}{\bW}$.  The foremost results of this paper -- \cref{adjunction thm} and \cref{cor quillen equivce} -- assert that certain structures on model $\infty$-categories (namely, Quillen adjunctions and Quillen equivalences) descend to corresponding structures on their localizations (namely, derived adjunctions and derived adjoint equivalences).  In this section, we elaborate further on this theme: we define \textit{two-variable Quillen adjunctions} (see \cref{defn two-var Q adjn}), and prove that they induce canonical \textit{derived two-variable adjunctions} (see \cref{two-var adjunction thm}).  For a more leisurely discussion of two-variable Quillen adjunctions (between model 1-categories), we refer the reader to \cite[\sec 4.2]{HoveyModelCats}.

We begin with a few auxiliary definitions.

\begin{defn}\label{defn various products of maps}
Suppose that we are given three $\infty$-categories $\C$, $\D$, and $\E$, along with a two-variable adjunction
\[ \twovaradjCDE \]
between them.
\begin{itemize}
\item We define the corresponding \bit{pushout product} bifunctor
\[ \Fun([1],\C) \times \Fun([1],\D) \xra{- \square -} \Fun([1],\E) \]
to be given by
\[ (c_1 \ra c_2) \square (d_1 \ra d_2) = \left( (c_2 \otimes d_1) \coprod_{(c_1 \otimes d_1)} (c_1 \otimes d_2) \ra d_1 \otimes d_2 \right) . \]
\item We define the corresponding \bit{left pullback product} bifunctor
\[ \Fun([1],\C)^{op} \times \Fun([1],\E) \xra{\enrhom_l^\square(-,-)} \Fun([1],\D) \]
to be given by
\[ \enrhom_l^\square((c_1\ra c_2)^\opobj ,e_1 \ra e_2) = \left(  \enrhom_l(c_2,e_1) \ra \enrhom_l(c_2,e_1) \underset{\enrhom_l(c_1,e_2)}{\times} \enrhom_l(c_1,e_1) \right) . \]
\item We define the corresponding \bit{right pullback product} bifunctor
\[ \Fun([1],\D)^{op} \times \Fun([1],\E) \xra{\enrhom_r^\square(-,-)} \Fun([1],\C) \]
to be given by
\[ \enrhom_r^\square((d_1\ra d_2)^\opobj ,e_1 \ra e_2) = \left(  \enrhom_r(d_2,e_1) \ra \enrhom_r(d_2,e_1) \underset{\enrhom_r(d_1,e_2)}{\times} \enrhom_r(d_1,e_1) \right) . \]
\end{itemize}
\end{defn}

\begin{rem}\label{rem identify maps as pushout products}
In the situation of \cref{defn various products of maps}, the bifunctor $\C \times \D \xra{- \otimes -} \E$ is a left adjoint and hence commutes with colimits.  Thus, we obtain canonical equivalences $\es_\C \otimes \D \simeq \es_\E \simeq c \otimes \es_\D$ for any $c \in \C$ and any $d \in \D$.  It follows that we obtain identifications
\[ (c_1 \ra c_2) \square (\es_\D \ra d) \simeq (c_1 \ra c_2) \otimes d . \]
and
\[ (\es_\C \ra c) \square (d_1 \ra d_2) \simeq c \otimes (d_1 \ra d_2) . \]
Similarly, we obtain an identification
\[ (\es_\C \ra c) \square (\es_\D \ra d) \simeq (\es_\E \ra c \otimes d) . \]
\end{rem}

We can now given the main definition of this subsection.

\begin{defn}\label{defn two-var Q adjn}
Suppose that $\C$, $\D$, and $\E$ are model $\infty$-categories, and suppose we are given a two-variable adjunction
\[ \twovaradjCDE \]
between their underlying $\infty$-categories.  We say that these data define a \bit{Quillen adjunction of two variables} (or simply a \bit{two-variable Quillen adjunction}) if any of the following equivalent conditions is satisfied:
\begin{itemizesmall}
\item the pushout product bifunctor satisfies
\begin{itemizesmall}
\item $\bC_\C \square \bC_\D \subset \bC_\E$,
\item $(\bW \cap \bC)_\C \square \bC_\D \subset (\bW \cap \bC)_\E$, and
\item $\bC_\C \square (\bW \cap \bC)_\D \subset (\bW \cap \bC)_\E$;
\end{itemizesmall}
\item the left pullback product bifunctor satisfies
\begin{itemizesmall}
\item $\enrhom_l^\square(\bC_\C,\bF_\E) \subset \bF_\D$,
\item $\enrhom_l^\square((\bW \cap \bC)_\C,\bF_\E) \subset (\bW \cap \bF)_\D$, and
\item $\enrhom_l^\square(\bC_\C,(\bW \cap \bF)_\E) \subset (\bW \cap \bF)_\D$;
\end{itemizesmall}
\item the right pullback product bifunctor satisfies
\begin{itemizesmall}
\item $\enrhom_r^\square(\bC_\D,\bF_\E) \subset \bF_\C$,
\item $\enrhom_r^\square((\bW \cap \bC)_\D,\bF_\E) \subset (\bW \cap \bF)_\C$, and
\item $\enrhom_r^\square(\bC_\D,(\bW \cap \bF)_\E) \subset (\bW \cap \bF)_\C$.
\end{itemizesmall}
\end{itemizesmall}
\end{defn}

Before stating the main result of this subsection, we must introduce a parametrized version of \cref{adjunction thm}.

\begin{notn}\label{define infty-cats QAdjn and LQAdjt and RQAdjt}
Let $\M$ and $\N$ be model $\infty$-categories.  We write $\QAdjn(\M;\N) \subset \Adjn(\M;\N)$ for the full subcategory on the Quillen adjunctions, and we write $\LQAdjt(\M,\N) \subset \Fun(\M,\N)$ (resp.\! $\RQAdjt(\N,\M) \subset \Fun(\N,\M)$) for the full subcategory of left (resp.\! right) Quillen functors.\footnote{More precisely, in the latter definitions we might refer only to those functors which \textit{admit} right (resp.\! left) adjoints.  The question of whether the resulting adjunction will be a Quillen adjunction is independent of that choice, however.}  Thus, there are evident equivalences
\[ \LQAdjt(\M,\N)^{op} \xla{\sim} \QAdjn(\M;\N) \xra{\sim} \RQAdjt(\N,\M) . \]
Similarly, for model $\infty$-categories $\C$, $\D$, and $\E$, we write $\QAdjn(\C,\D;\E) \subset \Adjn(\C,\D;\E)$ for the full subcategory on the two-variable Quillen adjunctions.
\end{notn}

\begin{lem}\label{parametrized localizns of Q adjns}
For any model $\infty$-categories $\M$ and $\N$, the construction of \cref{adjunction thm} assembles canonically into a functor
\[ \QAdjn(\M;\N) \ra \Adjn \left( \loc{\M}{\bW_\M};\loc{\N}{\bW_\N} \right) . \]
\end{lem}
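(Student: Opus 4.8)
The plan is to upgrade the construction of \cref{adjunction thm} to a map of simplicial spaces and read off the asserted functor from it. The first step is to record the relevant descriptions of the Rezk nerves in the style of \cref{rewrite Nervei of Fun using coCart}. Since an adjunction is equivalent data to its left adjoint functor, the $\infty$-category $\Adjn(\C;\D)$ is equivalent to (the opposite of) the full subcategory $\Fun^L(\C,\D) \subset \Fun(\C,\D)$ on the left adjoints; combining this with \cref{rewrite Nervei of Fun using coCart}, the space of $n$-simplices of $\Nervei(\Adjn(\C;\D))$ is identified with the subspace of $\coCartFib([1];[n] \times \C,\D)^\simeq$ spanned by those cocartesian fibrations $E \ra [1]$ whose classifying functor $[n] \times \C \ra \D$ carries each slice $\{i\} \times \C \ra \D$ to a left adjoint --- equivalently, for which the full subcategory $E_{(i)} \subset E$ on $(\{i\} \times \C) \cup \D$ is a bicartesian fibration over $[1]$ (cf.\! \cref{bicart-1 is not Adjn}). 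Since $\QAdjn(\M;\N) \subset \Adjn(\M;\N)$ is a full subcategory (\cref{define infty-cats QAdjn and LQAdjt and RQAdjt}), the sub-simplicial space $\Nervei(\QAdjn(\M;\N)) \subset \Nervei(\Adjn(\M;\N))$ is cut out under this identification by the further vertexwise condition that each such $E_{(i)} \ra [1]$ be a \emph{Quillen} adjunction. It therefore suffices to construct, naturally in $[n] \in \bD$, a map from this subspace of $\coCartFib([1];[n]\times\M,\N)^\simeq$ to the corresponding subspace of $\coCartFib([1];[n]\times\loc{\M}{\bW_\M},\loc{\N}{\bW_\N})^\simeq$ --- compatibly with the vertexwise bicartesianness --- and to check that on $0$-simplices it recovers the assignment of \cref{adjunction thm}.

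To build such a map I would run the proof of \cref{adjunction thm} verbatim, carrying the parameter $[n]$ along inertly. Given an $n$-simplex as above, say a bicartesian fibration $(\M+\N)_{[n]} \ra [1]$ with $0$-fiber $[n] \times \M$ and $1$-fiber $\N$ all of whose vertex adjunctions are Quillen, equip it with the weak equivalences inherited from $\min([n]) \times (\M,\bW_\M)$ and from $(\N,\bW_\N)$, and form the full relative subcategories $(\M^c+\N)_{[n]}$, $(\M^c+\N^f)_{[n]}$, $(\M+\N^f)_{[n]}$ by restricting the $0$-fiber to $[n] \times \M^c$ and/or the $1$-fiber to $\N^f$. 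By Kenny Brown's lemma (\cref{kenny brown}), applied at each vertex of $[n]$, the functors $[n] \times \M^c \ra \N$ and $\N^f \ra [n] \times \M$ underlying these decorated fibrations preserve weak equivalences, so $(\M^c+\N)_{[n]}$ is a relative cocartesian and $(\M+\N^f)_{[n]}$ a relative cartesian fibration over $[1]$; applying \cref{fiberwise localization} and its dual, together with the fact that $\RelCati \xra{\locL} \Cati$ preserves finite products (\cref{rnerves:localization preserves finite products}), identifies $\locL((\M^c+\N)_{[n]}) \ra [1]$ and $\locL((\M+\N^f)_{[n]}) \ra [1]$ as cocartesian and cartesian fibrations with $0$-fiber $[n] \times \loc{\M^c}{(\bW_\M^c)}$ and $1$-fiber $\loc{\N^f}{(\bW_\N^f)}$ respectively. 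The family versions of \cref{inclusions of collages are weak equivalences} and of \cref{inclusion of fibts induces equivce on gpd-compln} hold: the former because its proof only ever invokes \cref{cat of fibt replacements is weakly contractible} for functor model $\infty$-categories of the form $\Fun([j],\N)_\projective$, into which the extra $[n]$-coordinate slots harmlessly (one replaces $\Fun([i],\M^c)$ by $\Fun([i],[n] \times \M^c) \simeq \Fun([i] \times [n],\M^c)$ throughout), and the latter because $\locL$ commutes with the product by $[n]$. Chaining these exactly as in the proof of \cref{adjunction thm} produces a zigzag of equivalences in $(\Cati)_{/[1]}$ exhibiting $\locL((\M^c+\N^f)_{[n]}) \ra [1]$ as a bicartesian fibration with $0$-fiber canonically identified with $[n] \times \loc{\M}{\bW_\M}$ and $1$-fiber with $\loc{\N}{\bW_\N}$; its classifying functor sends the $i$-th vertex to the left derived functor $\bbL F_i$, a left adjoint by \cref{adjunction thm}. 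Hence the output is a well-defined $n$-simplex of $\Nervei(\Adjn(\loc{\M}{\bW_\M};\loc{\N}{\bW_\N}))$, and since every construction used --- forming the bicartesian fibration, passing to the relative subcategories of ``good'' objects, $\Gr_\Rel$, $\locL$, the collage weak equivalences, and the fiber identifications --- is manifestly natural in $[n]$, this assignment is a map of simplicial spaces. As both are complete Segal spaces (being $\Nervei$ of $\infty$-categories), it is the same datum as a functor $\QAdjn(\M;\N) \ra \Adjn(\loc{\M}{\bW_\M};\loc{\N}{\bW_\N})$; its restriction to $0$-simplices recovers \cref{adjunction thm}.

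The main obstacle is organizational rather than conceptual: one must check that each step in the proof of \cref{adjunction thm} is genuinely functorial in $[n] \in \bD$ and that the various fiber identifications assemble compatibly, so that the construction defines a bona fide map of simplicial spaces landing in the sub-simplicial space cut out by the vertexwise bicartesianness condition (and, on the source, the Quillen condition). The one point genuinely requiring a second look is the family version of \cref{inclusions of collages are weak equivalences}: but its proof decomposes $\Fun([n'],(\M^c+\N^{(f)}))$ over the possible composite structure maps $[n'] \ra (\M^c+\N^{(f)}) \ra [1]$ and then reduces, via Theorem A and the comma-category computation given there, to applications of \cref{cat of fibt replacements is weakly contractible} to functor model $\infty$-categories $\Fun([j],\N)_\projective$ --- a pattern unaffected by replacing $\M^c$ with $[n] \times \M^c$ everywhere. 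Granting this, the argument proceeds as sketched.
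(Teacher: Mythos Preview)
Your approach runs into a genuine obstacle at the cartesian step. The object $(\M + \N^f)_{[n]} \to [1]$ that you form --- with $0$-fiber $[n] \times \M$ and $1$-fiber $\N^f$ --- is \emph{not} a cartesian fibration in general, so the dual of \cref{fiberwise localization} does not apply and there is no functor ``$\N^f \to [n] \times \M$'' to speak of. The point is that the cocartesian fibration over $[1]$ classified by a functor $F \colon [n] \times \M \to \N$ is bicartesian precisely when $F$ itself admits a right adjoint $\N \to [n] \times \M$, and this is \emph{not} implied by each vertex restriction $F_i \colon \M \to \N$ admitting a right adjoint $G_i$. (Already for $n=1$: a cartesian lift of $y \in \N$ along $0 \to 1$ would have to land at some $(j,m) \in [1] \times \M$ with $\hom_\N(F_i(m'),y) \simeq \hom_{[1]}(i,j) \times \hom_\M(m',m)$ for all $(i,m')$, and neither choice of $j$ works unless the $G_i(y)$ happen to coincide or $F_1$ is degenerate.) Consequently the zigzag you describe cannot exhibit $\locL((\M^c + \N^f)_{[n]})$ as a bicartesian fibration.

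This asymmetry --- the parameter $[n]$ sits on the $\M$-side in the cocartesian description $\coCartFib([1];[n]\times\M,\N)$ but would have to sit on the $\N$-side in the cartesian description $\CartFib([1];\M,[n]\times\N)$ --- is exactly the difficulty the paper's proof confronts head-on. Rather than running \cref{adjunction thm} simplexwise, the paper builds at the level of $\infty$-categories two a priori different functors out of $\QAdjn(\M;\N)$, one through $\LQAdjt$ and $\Fun(\M^c,\N)^\Rel$ and one through $\RQAdjt$ and $\Fun(\N^f,\M)^\Rel$; the substantive work is showing these agree. To compare them the paper passes to Rezk nerves and ``fattens up'' both fibers, embedding the two descriptions into a common simplicial space of the shape $\Cati([1]; [\bullet] \times \M^c, [\bullet] \times \N^f)^\simeq$ with $[\bullet]$ on \emph{both} sides, at which point the asymmetry disappears. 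You could salvage a functor by abandoning the cartesian half entirely and working only through the cocartesian side (this is essentially the left column of the paper's main diagram, and your vertexwise appeal to \cref{adjunction thm} then suffices to land in $\LAdjt$), but you would forfeit the two-sided compatibility --- which is precisely what the paper invokes later in the proof of \cref{two-var adjunction thm}.
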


We will prove \cref{parametrized localizns of Q adjns} below.  First, we state the main result of this section.

\begin{thm}\label{two-var adjunction thm}
Suppose that $\C$, $\D$, and $\E$ are model $\infty$-categories.  Then, a two-variable Quillen adjunction
\[ \twovaradjCDE \]
induces a canonical two-variable adjunction
\[ \dertwovaradjCDE \]
on localizations, whose constituent bifunctors are respectively obtained by applying the localization functor $\RelCati \xra{\locL} \Cati$ to the composites
\[ \compositescomputedertwovaradjCDE . \]
Moreover, this construction assembles canonically into a functor
\[ \QAdjn(\C,\D;\E) \ra \Adjn(\loc{\C}{\bW_\C} , \loc{\D}{\bW_\D} ; \loc{\E}{\bW_\E} ) . \]
\end{thm}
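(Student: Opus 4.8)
The plan is to bootstrap from the one-variable case, \cref{parametrized localizns of Q adjns}, by currying, and then to record a second, more ``invariant'' proof modeled directly on \cref{adjunction thm}. For the first approach, recall that a two-variable Quillen adjunction is the same datum as a bifunctor $\C \times \D \xra{\otimes} \E$ which restricts to a left Quillen functor in each variable separately: by \cref{rem identify maps as pushout products}, the pushout-product conditions of \cref{defn two-var Q adjn} say precisely that $c \otimes - \colon \D \ra \E$ is left Quillen (with right adjoint $\enrhom_l(c,-)$) for every cofibrant $c \in \C^c$, and symmetrically that $- \otimes d \colon \C \ra \E$ is left Quillen for every cofibrant $d$. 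Currying $\otimes$ and restricting to cofibrant objects therefore produces a functor $\C^c \ra \QAdjn(\D;\E)^{op}$ (the $(-)^{op}$ coming from the equivalence $\QAdjn(\D;\E) \simeq \LQAdjt(\D,\E)^{op}$ of \cref{define infty-cats QAdjn and LQAdjt and RQAdjt}); composing with $(\cref{parametrized localizns of Q adjns})^{op}$ and passing to underlying left-adjoint functors yields a functor $\C^c \ra \Fun(\loc{\D}{\bW_\D},\loc{\E}{\bW_\E})$ landing in the left adjoints; and since a natural weak equivalence $c \we c'$ of cofibrant objects induces a natural weak equivalence $c \otimes - \we c' \otimes -$ of left Quillen functors, hence an equivalence after deriving, this functor inverts the weak equivalences of $\C^c$ and so factors through $\loc{\C^c}{(\bW_\C^c)} \simeq \loc{\C}{\bW_\C}$ (by \cref{inclusion of fibts induces equivce on gpd-compln} and its dual).

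Uncurrying then gives a bifunctor $\loc{\C}{\bW_\C} \times \loc{\D}{\bW_\D} \ra \loc{\E}{\bW_\E}$, and running the same construction in the $\D$-variable (with $\enrhom_r$ in place of $\enrhom_l$) shows it is a left adjoint in the second variable as well. A bifunctor which is a left adjoint in each variable separately is a two-variable adjunction (the two families of pointwise right adjoints assemble, by adjointness, into the second-variable functors $\enrhom_l$ and $\enrhom_r$), which produces the desired derived two-variable adjunction on localizations. Tracing the formula of \cref{adjunction thm} for the derived adjoints $\bbL(c \otimes -)$ and $\bbR\,\enrhom_l(c,-)$ and assembling over $c \in \C^c$ identifies the constituent bifunctors with $\locL$ applied to $\C^c \times \D^c \hookra \C \times \D \xra{\otimes} \E$, to $(\C^c)^{op} \times \E^f \hookra \C^{op} \times \E \xra{\enrhom_l} \D$, and to $(\D^c)^{op} \times \E^f \hookra \D^{op} \times \E \xra{\enrhom_r} \C$, as claimed. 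For the last sentence of the theorem, each ingredient above (currying, \cref{parametrized localizns of Q adjns}, uncurrying, and the factorization through the localization) is manifestly natural in the two-variable Quillen adjunction once one phrases everything via the appropriate pullbacks of fibration $\infty$-categories, in the spirit of \cref{restricted coCart fibns over 1} and \cref{rewrite Nervei of Fun using coCart}; this gives the functor $\QAdjn(\C,\D;\E) \ra \Adjn(\loc{\C}{\bW_\C},\loc{\D}{\bW_\D};\loc{\E}{\bW_\E})$.

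The alternative proof mimics \cref{adjunction thm} itself: encode a two-variable adjunction as a fibration of the appropriate mixed variance over a finite poset $\mathcal{B}$ — the ``walking two-variable adjunction'', playing the role that $[1]$ plays for ordinary adjunctions (cf.\ \cref{subsection bicart fibns} and \cref{bicart-1 is not Adjn}). Given the two-variable Quillen adjunction, form this fibration, equip its fibers with their weak equivalences so that the structure map becomes a map to $\min(\mathcal{B})$ in $\RelCati$ (in the sense of \cref{define relative co/cart fibns}), and pass to the full relative subcategory of ``good'' objects — cofibrant in the $\C$- and $\D$-fibers, fibrant in the $\E$-fiber — whose transition functors are functors of relative $\infty$-categories by Kenny Brown's lemma (\cref{kenny brown}) together with \cref{defn two-var Q adjn}. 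One then shows the inclusion of good objects is a weak equivalence over $\min(\mathcal{B})$ in $((\RelCati)_{/\min(\mathcal{B})})_\BarKan$, by the stratified groupoid-completion argument of \cref{inclusions of collages are weak equivalences} (reducing each stratum either to an identification of fibers, to \cref{inclusion of fibts induces equivce on rezk nerves}, or to a comma-$\infty$-category contractibility argument built on \cref{cat of fibt replacements is weakly contractible}), and finally applies $\locL$: by \cref{fiberwise localization} (and its variants) the result is the fibration over $\mathcal{B}$ classifying $\mathcal{B} \ra \RelCati \xra{\locL} \Cati$, which again has the requisite mixed variance and hence encodes a two-variable adjunction, with the fibers $\loc{\C^c}{(\bW_\C^c)}$, $\loc{\D^c}{(\bW_\D^c)}$, $\loc{\E^f}{(\bW_\E^f)}$ replaced by $\loc{\C}{\bW_\C}$, $\loc{\D}{\bW_\D}$, $\loc{\E}{\bW_\E}$ via \cref{inclusion of fibts induces equivce on gpd-compln} and its dual.

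The main obstacle in either case is the coherence bookkeeping hidden in the slogan ``a left adjoint in each variable is a two-variable adjunction''. In the first approach one must check that the two families of pointwise right adjoints genuinely glue into the structure of a two-variable adjunction (not merely a pair of separate one-variable adjunctions), and correctly track the resulting $\enrhom$-bifunctors; in the second approach the corresponding difficulty is the weak-equivalence comparison for the good-objects inclusion, which even for a single adjunction required the intricate comma-$\infty$-category-and-adjunction argument of \cref{inclusions of collages are weak equivalences}, and which here involves more strata — though each reduces to a case already handled there.
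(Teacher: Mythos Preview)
Your first approach is the paper's own strategy: curry to obtain $(\C^c)^{op} \to \QAdjn(\D;\E)$, apply \cref{parametrized localizns of Q adjns}, show the result inverts $\bW_\C^c$ and hence factors through $\loc{\C}{\bW_\C}$, then symmetrize in $\D$. One small imprecision: for a weak equivalence $c \we c'$ between cofibrant objects, the map $c \otimes d \to c' \otimes d$ is not a weak equivalence for arbitrary $d$; the paper first checks this for \emph{acyclic cofibrations} $c_1 \wcofibn c_2$ (where it follows because $- \otimes d$ is left Quillen for $d \in \D^c$) and then invokes Kenny Brown's lemma (\ref{kenny brown}). Your conclusion survives, since the derived functor is computed on $\D^c$, but the intermediate assertion as written is false.

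More importantly, you correctly flag the gluing step as the crux but stop short of performing it. The paper closes this gap by embedding both curried constructions into the common target
\[ \Fun\bigl((\loc{\C}{\bW_\C})^{op} \times (\loc{\D}{\bW_\D})^{op} \times \loc{\E}{\bW_\E},\, \S\bigr) \]
and arguing, by essentially the same technique as in the proof of \cref{parametrized localizns of Q adjns}, that the $\C$-curried and $\D$-curried constructions land on the \emph{same} trifunctor there. This simultaneously shows the trifunctor is co/representable in every variable (hence lies in $\Adjn$) and that the construction is independent of which variable was privileged --- which is also what handles the functoriality in the last sentence of the theorem. Your slogan ``a bifunctor which is a left adjoint in each variable separately is a two-variable adjunction'' is correct abstractly, but you must still verify the two uncurryings agree; that is precisely the commutative square the paper writes down.

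Your second, fibrational approach over a ``walking two-variable adjunction'' base is not in the paper. It is a plausible alternative, but the paper does not develop any such base $\mathcal{B}$ or the mixed-variance analogue of \cref{fiberwise localization}, and the multi-stratum analogue of \cref{inclusions of collages are weak equivalences} would have to be worked out from scratch.
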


We will prove \cref{two-var adjunction thm} at the end of this section (after the proof of \cref{parametrized localizns of Q adjns}).

\begin{defn}
Given a two-variable Quillen adjunction, we refer to the resulting two-variable adjunction on localizations of \cref{two-var adjunction thm} as its \bit{derived two-variable adjunction}, and we refer to its constituent bifunctors as the \bit{derived bifunctors} of those of the original two-variable Quillen adjunction.
\end{defn}

\begin{rem}
A two-variable adjunction can be thought of as a special sort of indexed family of adjunctions.\footnote{The ``special'' here refers to the fact that functor $\Adjn(\C,\D;\E) \ra \Fun(\C^{op},\Adjn(\D;\E))$ will not generally be surjective.}  Thus, \cref{parametrized localizns of Q adjns} provides a crucial ingredient for the proof of \cref{two-var adjunction thm}.  As a result, it is essentially no more work to prove the parametrized version of \cref{two-var adjunction thm} than it is to prove the unparametrized version.
\end{rem}

\begin{proof}[Proof of \cref{parametrized localizns of Q adjns}]
Our argument takes place in the diagram in $\Cati$ of \cref{diagram in pf of param localizns}.  Our asserted functor is the middle dotted vertical arrow.
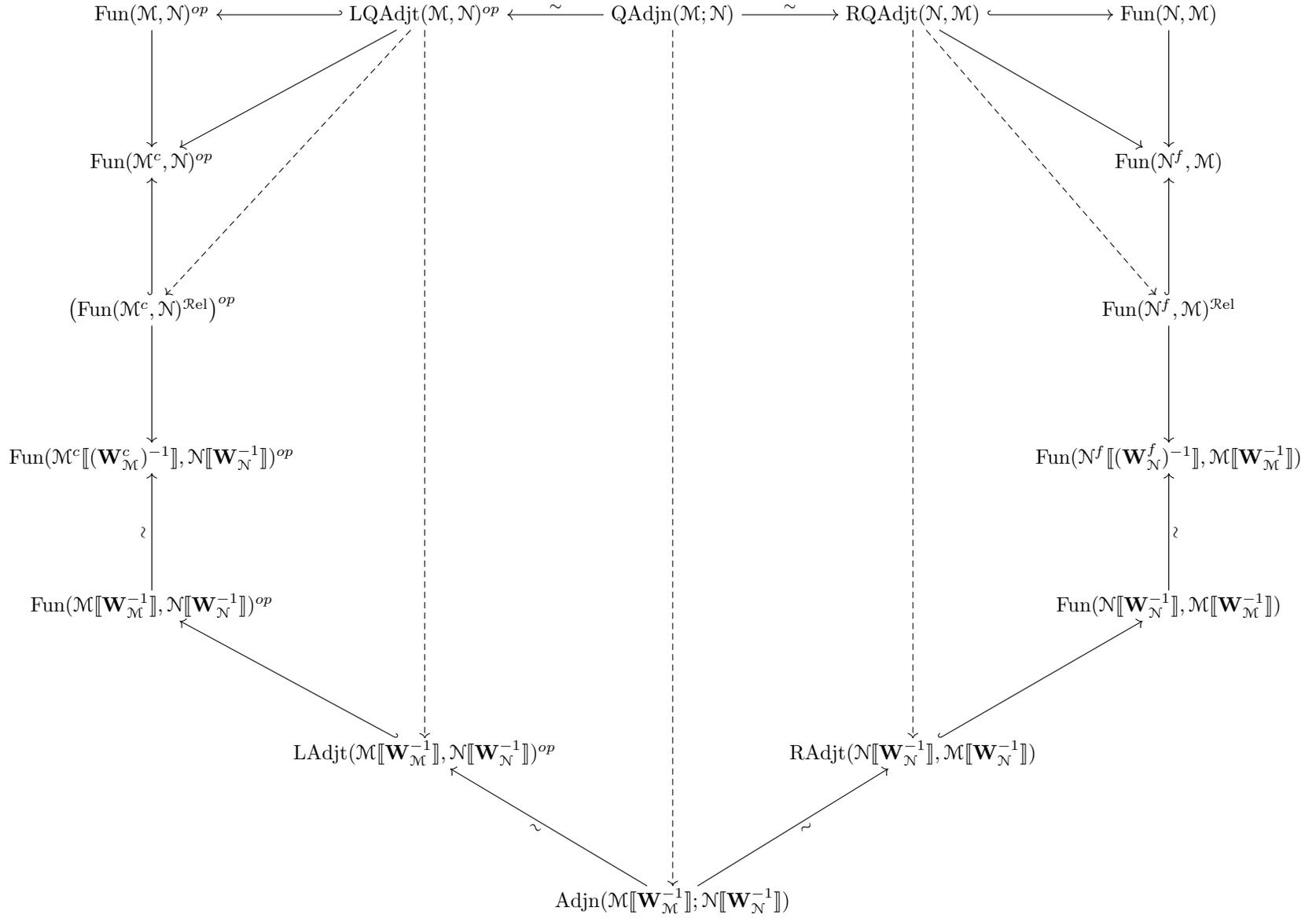
\begin{sidewaysfigure}
\vspace{500pt}
\[ \begin{tikzcd}[ampersand replacement=\&, row sep=2cm, column sep=-0.25cm]
\Fun(\M,\N)^{op} \arrow[hookleftarrow]{r} \arrow{d} \& \LQAdjt(\M,\N)^{op} \arrow{ld} \arrow[dashed]{ldd} \arrow[dashed]{ddddd} \& \QAdjn(\M;\N) \arrow{l}[swap]{\sim} \arrow{r}{\sim} \arrow[dashed]{dddddd} \& \RQAdjt(\N,\M) \arrow[hook]{r} \arrow{rd} \arrow[dashed]{rdd} \arrow[dashed]{ddddd} \& \Fun(\N,\M) \arrow{d} \\
\Fun(\M^c,\N)^{op} \& \& \& \& \Fun(\N^f,\M) \\
\left( \Fun(\M^c,\N)^\Rel \right)^{op} \arrow[hook]{u} \arrow{d} \& \& \& \& \Fun(\N^f,\M)^\Rel \arrow[hook]{u} \arrow{d} \\
\Fun(\loc{\M^c}{(\bW_\M^c)} , \loc{\N}{\bW_\N} )^{op} \& \& \& \& \Fun(\loc{\N^f}{(\bW_\N^f)} , \loc{\M}{\bW_\M} ) \\
\Fun(\loc{\M}{\bW_\M} , \loc{\N}{\bW_\N})^{op} \arrow{u}[sloped, anchor=south]{\sim} \arrow[hookleftarrow]{rd} \& \& \& \& \Fun(\loc{\N}{\bW_\N} , \loc{\M}{\bW_\M}) \arrow{u}[sloped, anchor=north]{\sim} \\
\& \LAdjt(\loc{\M}{\bW_\M} , \loc{\N}{\bW_\N})^{op} \& \& \RAdjt(\loc{\N}{\bW_\N} , \loc{\M}{\bW_\N}) \arrow[hook]{ru} \\
\& \& \Adjn(\loc{\M}{\bW_\M} ; \loc{\N}{\bW_\N}) \arrow{lu}[sloped]{\sim} \arrow{ru}[sloped, swap]{\sim}
\end{tikzcd} \]
\caption{The main diagram in the proof of \cref{parametrized localizns of Q adjns}.}
\label{diagram in pf of param localizns}
\end{sidewaysfigure}
Moreover,
\begin{itemizesmall}
\item the diagonal factorizations follow from Kenny Brown's lemma (\ref{kenny brown}),
\item the vertical maps out of the targets of these factorizations are those of \cref{rnerves:map from rel fctr cat to fctrs betw localizns},
\item the vertical equivalences follow from \cref{inclusion of fibts induces equivce on gpd-compln} (and its dual), and
\item the vertical factorizations follow from \cref{adjunction thm}.
\end{itemizesmall}
Thus, it only remains to show that the diagram commutes, i.e.\! that the two shorter vertical dotted arrows -- which by definition make the outer parts of the diagram commute -- also make the part of the diagram between them commute.

The chief difficulty is in aligning the various sorts of fibrations over $[1]$, which are the setting of the proof of \cref{adjunction thm}, with our $\infty$-categories of adjunctions (recall \cref{bicart-1 is not Adjn}).  We can solve this using \cref{rewrite Nervei of Fun using coCart}.  For instance, via the equivalence $\Nervei : \Cati \xra{\sim} \CSS$, we can identify the right portion of the diagram of \cref{diagram in pf of param localizns} as in \cref{image in CSS of right side of diagram in pf of param localizns}.
\begin{figure}[h]
\[ \begin{tikzcd}[row sep=1.5cm, column sep=-1.5cm]
\Nervei(\RQAdjt(\N,\M))_\bullet \arrow[hook]{r} \arrow{rd} \arrow[dashed]{rdd} \arrow[dashed]{ddddd} & \CartFib([1];\M,[\bullet] \times \N)^\simeq \arrow{d} \\
& \CartFib([1];\M , [\bullet] \times \N^f)^\simeq \\
& \CartFib_\Rel([1];\M , [\bullet] \times \N^f)^\simeq \arrow[hook]{u} \arrow{d} \\
& \CartFib([1];\loc{\M}{\bW_\M},[\bullet] \times \loc{\N^f}{(\bW_\N^f)})^\simeq \\
& \CartFib([1];\loc{\M}{\bW_\M},[\bullet] \times \loc{\N}{\bW_\N})^\simeq \arrow{u}[sloped, anchor=north]{\sim} \\
\Nervei(\RAdjt(\loc{\M}{\bW_\M},\loc{\N}{\bW_\N}))_\bullet \arrow[hook]{ru}
\end{tikzcd} \]
\caption{The nerve of the right portion of the diagram of \cref{diagram in pf of param localizns}.}
\label{image in CSS of right side of diagram in pf of param localizns}
\end{figure}

However, we have not quite reached a symmetric state of affairs: we would like to somehow relate this to the corresponding identifications of the nerves of the left side of the diagram of \cref{diagram in pf of param localizns}, but for instance we have
\[ \Nervei(\Fun(\M,\N))_\bullet \simeq \coCartFib([1];[\bullet] \times \M,\N)^\simeq , \]
and the fibers here do not match up with those in \cref{image in CSS of right side of diagram in pf of param localizns} (nor is this rectified by the fact that we're actually interested in $\Fun(\M,\N)^{op}$ (recall \cref{rnerves:op on CSS})).  To rectify this, we observe that for any $n \geq 0$ and any $\C,\D \in \Cati$, we have a canonical map
\[ \Nervei(\Fun(\C,\D))_n
\simeq \hom_\Cati([n] \times \C, \D)
\ra \hom_\Cati([n] \times \C , [n]) \times \hom_\Cati([n] \times \C , \D)
\simeq \hom_\Cati([n] \times \C , [n] \times \D) \]
selected by the point $\pr_{[n]} \in \hom_\Cati([n] \times \C,[n])$, and this target in turn admits a forgetful map
\[ \hom_\Cati ([n] \times \C , [n] \times \D) \simeq \coCartFib([1];[n] \times \C , [n] \times \D) \ra \Cati([1];[n] \times \C , [n] \times \D ) . \]
Bootstrapping this technique up to the relative case (and piecing the maps together for all objects $[n]^\opobj \in \bD^{op}$), we obtain the diagram of \cref{fatten up fibers over both 0 and 1 in pf of param localizns}, which provides an inclusion of the right edge of the diagram of \cref{image in CSS of right side of diagram in pf of param localizns} into various complete Segal spaces whose constituent spaces now consists of maps to $[1]$ whose fibers over \textit{both} objects $0 \in [1]$ and $1 \in [1]$ are ``fattened up''.

\begin{figure}[h]
\[ \begin{tikzcd}[row sep=1.5cm]
\Cati([1];[\bullet] \times \M,[\bullet] \times \N)^\simeq \arrow[hookleftarrow]{r} \arrow{d} & \CartFib([1];\M,[\bullet] \times \N)^\simeq \arrow{d} \\
\Cati([1];[\bullet] \times \M , [\bullet] \times \N^f)^\simeq \arrow[hookleftarrow]{r} & \CartFib([1];\M , [\bullet] \times \N^f)^\simeq \\
\RelCati([1];[\bullet] \times \M , [\bullet] \times \N^f)^\simeq \arrow[hookleftarrow]{r} \arrow[hook]{u} \arrow{d} & \CartFib_\Rel([1];\M , [\bullet] \times \N^f)^\simeq \arrow[hook]{u} \arrow{d} \\
\Cati([1] ; [\bullet] \times \loc{\M}{\bW_\M},[\bullet] \times \loc{\N^f}{(\bW_\N^f)})^\simeq \arrow[hookleftarrow]{r} & \CartFib([1];\loc{\M}{\bW_\M},[\bullet] \times \loc{\N^f}{(\bW_\N^f)})^\simeq \\
\Cati([1] ; [\bullet] \times \loc{\M}{\bW_\M},[\bullet] \times \loc{\N}{\bW_\N})^\simeq \arrow[hookleftarrow]{r} \arrow{u}[sloped, anchor=south]{\sim} & \CartFib([1];\loc{\M}{\bW_\M},[\bullet] \times \loc{\N}{\bW_\N})^\simeq \arrow{u}[sloped, anchor=north]{\sim}
\end{tikzcd} \]
\caption{An inclusion of the right edge of the diagram of \cref{image in CSS of right side of diagram in pf of param localizns}.}
\label{fatten up fibers over both 0 and 1 in pf of param localizns}
\end{figure}

From here, we only need mimic the proof of \cref{adjunction thm} and restrict further along the inclusion $\M^c \subset \M$: as displayed in the diagram of \cref{Mc and Nf objects in pf of param localizns}, the lower part of the left edge of the diagram of \cref{fatten up fibers over both 0 and 1 in pf of param localizns} admits an inclusion into a map which is now completely self-dual.
\begin{figure}[h]
\[ \begin{tikzcd}[row sep=1.5cm]
\RelCati([1];[\bullet] \times \M^c , [\bullet] \times \N^f)^\simeq \arrow[hookleftarrow]{r} \arrow{d} & \RelCati([1];[\bullet] \times \M , [\bullet] \times \N^f)^\simeq \arrow{d} \\
\Cati([1] ; [\bullet] \times \loc{\M}{\bW_\M},[\bullet] \times \loc{\N^f}{(\bW_\N^f)})^\simeq & \Cati([1] ; [\bullet] \times \loc{\M}{\bW_\M},[\bullet] \times \loc{\N^f}{(\bW_\N^f)})^\simeq \arrow{l}[swap]{\sim} \\
& \Cati([1] ; [\bullet] \times \loc{\M}{\bW_\M},[\bullet] \times \loc{\N}{\bW_\N})^\simeq \arrow{u}[sloped, anchor=north]{\sim} \arrow{lu}[sloped]{\sim}
\end{tikzcd} \]
\caption{The restriction along $\M^c \subset \M$ of the lower part of the left edge of the diagram of \cref{fatten up fibers over both 0 and 1 in pf of param localizns}.}
\label{Mc and Nf objects in pf of param localizns}
\end{figure}
This, finally, gives us a common home for the left and right sides of the diagram of \cref{diagram in pf of param localizns}: its left side
\begin{itemizesmall}
\item admits an identification of its nerve as in \cref{image in CSS of right side of diagram in pf of param localizns}, which in turn
\item admits an inclusion into certain ``fattened up'' objects as in \cref{fatten up fibers over both 0 and 1 in pf of param localizns}, which finally
\item connects, by restricting along the inclusion $\N^f \subset \N$, to the very same map
\[ \begin{tikzcd}[row sep=1.5cm]
\RelCati([1];[\bullet] \times \M^c , [\bullet] \times \N^f)^\simeq \arrow{d} \\
\Cati([1] ; [\bullet] \times \loc{\M}{\bW_\M},[\bullet] \times \loc{\N^f}{(\bW_\N^f)})^\simeq
\end{tikzcd} \]
as that on the left edge in \cref{Mc and Nf objects in pf of param localizns}.
\end{itemizesmall}
It is now simply a matter of unwinding the definitions to see that the middle part of the diagram in \cref{diagram in pf of param localizns} does indeed commute: all the localization functors admit full inclusions into the one indicated just above, and the $\infty$-category
\[ \Adjn(\loc{\M}{\bW_\M};\loc{\N}{\bW_\N}) \]
includes as a full subcategory of its target by, after breaking symmetry, once again appealing to the trick of selecting a canonical projection map to $[n] \in \Cati$ (though the entire point is that the two different ways of obtaining this inclusion are canonically equivalent).  This proves the claim.
\end{proof}

\begin{proof}[Proof of \cref{two-var adjunction thm}]
By \cref{rem identify maps as pushout products}, for any $c \in \C^c$ the induced adjunction
\[ c \otimes - : \D \adjarr \E : \enrhom_l(c,-) \]
is a Quillen adjunction.  Thus, we obtain a factorization
\[ \begin{tikzcd}[row sep=1.5cm]
\Fun(\C^{op} , \Fun (\D^{op} \times \E , \S)) \arrow[hookleftarrow]{r} & \Fun(\C^{op} , \Adjn(\D;\E)) \arrow{d} \\
\Adjn(\C,\D;\E) \arrow[hook]{u} \arrow{ru} \arrow{r} & \Fun((\C^c)^{op} , \Adjn(\D;\E)) \\
\QAdjn(\C,\D;\E) \arrow[hook]{u} \arrow{ru} \arrow[dashed]{r} & \Fun((\C^c)^{op} , \QAdjn(\D;\E)) , \arrow[hook]{u} 
\end{tikzcd} \]
which we compose the functor $(\C^c)^{op} \ra \QAdjn(\D,\E)$ selected by our two-variable Quillen adjunction with the canonical functor of \cref{parametrized localizns of Q adjns} to obtain a composite functor
\[ (\C^c)^{op} \ra \QAdjn(\D;\E) \ra \Adjn(\loc{\D}{\bW_\D} ; \loc{\E}{\bW_\E}) . \]

We claim that this composite functor takes weak equivalences to equivalences.  To see this, suppose first that we are given an acyclic cofibration $c_1 \wcofibn c_2$ in $\C^c$.  Again by \cref{rem identify maps as pushout products}, for any $d \in \D^c$ the induced adjunction
\[ - \otimes d : \C \adjarr \E : \enrhom_r(d,-) \]
is a Quillen adjunction, so that in particular we obtain an acyclic cofibration
\[ c_1 \otimes d \wcofibn c_2 \otimes d \]
is an acyclic cofibration in $\E$.  Since by \cref{adjunction thm} the derived left adjoints of these Quillen adjunctions $\D \adjarr \E$ are computed by localizing the composite $\D^c \hookra \D \ra \E$, it follows that the induced map $(c_1 \otimes - ) \ra (c_2 \otimes -)$ in
\[ \LQAdjt(\D,\E) \simeq \QAdjn(\D;\E)^{op} \]
does indeed descend to an equivalence in
\[ \LAdjt(\loc{\D}{\bW_\D} , \loc{\E}{\bW_\E}) \simeq \Adjn(\loc{\D}{\bW_\D} ; \loc{\E}{\bW_\E})^{op} . \]
The claim now follows from Kenny Brown's lemma (\ref{kenny brown}).  We therefore obtain a factorization
\[ \begin{tikzcd}[row sep=1.5cm]
(\C^c)^{op} \arrow{d} \arrow{r} & \Adjn(\loc{\D}{\bW_\D} ; \loc{\E}{\bW_\E}) \\
( \loc{\C^c}{(\bW_\C^c)} )^{op} \arrow[dashed]{ru}
\end{tikzcd} \]
which, appealing to \cref{rnerves:map from rel fctr cat to fctrs betw localizns}, in fact arises from the induced factorization in the diagram
\[ \begin{tikzcd}[row sep=1.5cm]
\QAdjn(\C,\D;\E) \arrow{r} \arrow[dashed]{rd} & \Fun((\C^c)^{op}, \Adjn(\loc{\D}{\bW_\D} ; \loc{\E}{\bW_\E}) ) \\
& \Fun ( (\C^c)^{op} , \min ( \Adjn(\loc{\D}{\bW_\D} ; \loc{\E}{\bW_\E})) ) ^\Rel \arrow[hook]{u} \arrow{d} \\
& \Fun( \loc{\C^c}{(\bW_\C^c)} , \Adjn(\loc{\D}{\bW_\D} ; \loc{\E}{\bW_\E}) ) .
\end{tikzcd} \]

Thus, it only remains to show that we have a further factorization
\[ \begin{tikzcd}[row sep=1.5cm]
\QAdjn(\C,\D;\E) \arrow{r} \arrow[dashed, bend right=15]{rdd} & \Fun( \loc{\C^c}{(\bW_\C^c)} , \Adjn(\loc{\D}{\bW_\D} ; \loc{\E}{\bW_\E}) ) \\
& \Fun( \loc{\C}{\bW_\C} , \Adjn(\loc{\D}{\bW_\D} ; \loc{\E}{\bW_\E}) ) \arrow{u}[sloped, anchor=north]{\sim} \\
& \Adjn( \loc{\C}{\bW_\C} , \loc{\D}{\bW_\D} ; \loc{\E}{\bW_\E}) \arrow[hook]{u}
\end{tikzcd} \]
which does not depend on our having privileged $\C$ among the model $\infty$-categories $\C$, $\D$, and $\E$ participating in our two-variable Quillen adjunction.  We accomplish these tasks simultaneously by replacing $\C$ with $\D$ in the above arguments: by essentially the same argument as the one given in the proof of \cref{two-var adjunction thm} for why the diagram of \cref{diagram in pf of param localizns} commutes, one sees that we have a commutative square
\[ \begin{tikzcd}[row sep=1.5cm, column sep=-3cm]
& \QAdjn(\C,\D;\E) \arrow{ld} \arrow{rd} \\
\Fun( (\loc{\C}{\bW_\C})^{op} , \Adjn(\loc{\D}{\bW_\D};\loc{\E}{\bW_\E})) \arrow[hook]{rd} & & \Fun( (\loc{\D}{\bW_\D})^{op} , \Adjn(\loc{\C}{\bW_\C};\loc{\E}{\bW_\E}))
\\
& \Fun( (\loc{\C}{\bW_\C})^{op} \times (\loc{\D}{\bW_\D})^{op} \times \loc{\E}{\bW_\E} , \S ) ,\arrow[hookleftarrow]{ru}
\end{tikzcd} \]
which shows
\begin{itemizesmall}
\item that those trifunctors in the image of either of the two (equivalent) composites are indeed co/representable in all variables and hence define two-variable adjunctions, and
\item that the resulting functor
\[ \QAdjn(\C,\D;\E) \ra \Adjn( \loc{\C}{\bW_\C} , \loc{\D}{\bW_\D} ; \loc{\E}{\bW_\E} ) \]
is indeed completely independent of the choice of $\C$, since rotating the two-variable (Quillen) adjunctions involved -- which really just amounts to reordering and passing to opposites as appropriate -- clearly does not affect the induced functor either.
\qedhere
\end{itemizesmall}
\end{proof}

\section{Monoidal and symmetric monoidal model $\infty$-categories}\label{section mon and symm mon model infty-cats}

In this section, we show that the localization of a (resp.\! \textit{symmetric}) \textit{monoidal model $\infty$-categories} is canonically \textit{closed} (resp.\! \textit{symmetric}) \textit{monoidal}.  For a more leisurely discussion of monoidal and symmetric monoidal model categories, we again refer the reader to \cite[\sec 4.2]{HoveyModelCats}.

\begin{defn}\label{defn of monoidal model infty-cat}
Let $\V \in \Alg(\Cati)$ be a closed monoidal $\infty$-category, and suppose that $\V$ is equipped with a model structure.  We say that these data make $\V$ into a \bit{monoidal model $\infty$-category} if they satisfy the following evident $\infty$-categorical analogs of the usual axioms for a monoidal model category.
\begin{enumeratesmall}
\item[\monppaxiom] \textit{(pushout product)} The underlying two-variable adjunction
\[ \twovaradjV \]
is a two-variable Quillen adjunction.
\item[\monunitaxiom] \textit{(unit)} There exists a cofibrant replacement $\es_\V \cofibn \bbQ \unit_\V \we \unit_\V$ such that the functors
\[ \V \xra{(\bbQ \unit_\V \ra \unit_\V ) \otimes -} \Fun([1],\V) \]
and
\[ \V \xra{- \otimes (\bbQ \unit_\V \ra \unit_\V )} \Fun([1],\V) \]
take cofibrant objects to weak equivalences.
\end{enumeratesmall}
\end{defn}

\begin{rem}\label{unit axiom vacuous when unit cofibt}
The unit axiom {\monunitaxiom} is automatically satisfied whenever the unit object $\unit_\V \in \V$ is itself cofibrant.
\end{rem}


We have the following key example.

\begin{ex}\label{ex KQ is a monoidal model str}
The model $\infty$-category $s\S_\KQ$ of \cref{sspaces:kan--quillen model structure on sspaces} is a monoidal model $\infty$-category with respect to its cartesian symmetric monoidal structure:
\begin{itemizesmall}
\item that the underlying two-variable adjunction is a Quillen adjunction follows from (an identical argument to) the proof of \cite[Lemma 4.2.4]{HoveyModelCats} (see \cite[Corollary 4.2.5]{HoveyModelCats}), and
\item the unit object $\pt_{s\S} \simeq \Delta^0 \in s\S_\KQ$ is cofibrant.
\end{itemizesmall}
\end{ex}

We then have the following result.

\begin{prop}\label{fund thm of monoidal model infty-cats}
Suppose that $\V$ is a monoidal model $\infty$-category.  Then the derived two-variable adjunction of its underlying two-variable Quillen adjunction itself underlies a canonical closed monoidal structure on its localization $\loc{\V}{\bW}$.
\end{prop}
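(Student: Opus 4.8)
The plan is to adapt Lurie's treatment of the cofibrant-unit case (\cite[\sec 4.3.1]{LurieHA}), using the unit axiom {\monunitaxiom} precisely in order to eliminate the cofibrancy hypothesis on $\unit_\V$. First I would observe that the full subcategory $\V^c \subset \V$ of cofibrant objects is closed under $\otimes$: combining \cref{rem identify maps as pushout products} (which gives $(\es_\V \ra c) \square (\es_\V \ra d) \simeq (\es_\V \ra c \otimes d)$) with the pushout product axiom {\monppaxiom} shows that $c \otimes d$ is cofibrant whenever $c$ and $d$ are. Moreover, for cofibrant $c$ and $d$ the functors $c \otimes -$ and $- \otimes d$ are left Quillen, so by Kenny Brown's lemma (\cref{kenny brown}) the bifunctor $-\otimes-$ carries pairs of weak equivalences between cofibrant objects to weak equivalences. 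Hence the nonunital part of the monoidal structure on $\V$ restricts to $\V^c$ compatibly with weak equivalences, i.e.\ $(\V^c, \bW^c_\V)$ is canonically a nonunital monoid object in the cartesian symmetric monoidal $\infty$-category $\RelCati$ (a ``nonunital monoidal relative $\infty$-category'').

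Next, since $\RelCati \xra{\locL} \Cati$ preserves finite products (\cref{rnerves:localization preserves finite products}) and is a left adjoint, it is canonically symmetric monoidal for the cartesian symmetric monoidal structures, and therefore carries nonunital monoid objects to nonunital monoid objects. Applying it to $(\V^c, \bW^c_\V)$ equips $\loc{\V^c}{(\bW^c_\V)} \simeq \loc{\V}{\bW}$ (\cref{inclusion of fibts induces equivce on gpd-compln}) with a canonical nonunital monoidal structure whose underlying bifunctor is $\locL$ of the composite $\V^c \times \V^c \xra{\otimes} \V$; by \cref{two-var adjunction thm} this is precisely the derived tensor bifunctor, which moreover participates in the derived two-variable adjunction, so its partial functors automatically admit right adjoints (namely the derived pullback product bifunctors $\enrhom_l$, $\enrhom_r$ of \cref{two-var adjunction thm}). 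It therefore remains only to refine this nonunital monoidal structure to an honest monoidal structure.

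This last step is where the unit axiom enters, and it is the place I expect the only real subtlety. I would take the object $\bbQ\unit_\V \in \V^c$: since $\bbQ\unit_\V$ is cofibrant, the map $\bbQ\unit_\V \ra \unit_\V$ induces, for every cofibrant $x$, maps $\bbQ\unit_\V \otimes x \ra \unit_\V \otimes x \simeq x$ and $x \otimes \bbQ\unit_\V \ra x$, which by the unit axiom {\monunitaxiom} are weak equivalences; these descend to natural equivalences in $\loc{\V^c}{(\bW^c_\V)}$, witnessing $\bbQ\unit_\V$ as a quasi-unit and hence the quasi-unitality of this nonunital monoidal $\infty$-category. By the theory of (quasi-)unital nonunital algebras (\cite[\sec 5.4.4]{LurieHA}), a quasi-unital nonunital monoidal $\infty$-category admits an essentially unique monoidal refinement; applying this to $\loc{\V^c}{(\bW^c_\V)} \simeq \loc{\V}{\bW}$ yields the asserted canonical monoidal structure on $\loc{\V}{\bW}$, which by the previous paragraph is closed and has the derived two-variable adjunction as its underlying two-variable adjunction. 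The main obstacle is this quasi-unital bootstrap; everything else is a formal consequence of \cref{two-var adjunction thm}, \cref{kenny brown}, and the symmetric monoidality of $\locL$.
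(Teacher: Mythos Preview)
Your proposal is correct and follows essentially the same route as the paper's proof: restrict the nonunital monoidal structure to $\V^c$, lift it to $\RelCati$ via Kenny Brown's lemma, localize using the symmetric monoidality of $\locL$, and then invoke Lurie's quasi-unit theory with $\bbQ\unit_\V$ as quasi-unit to obtain a unique unital refinement. The only minor discrepancy is bibliographic: the relevant result for the monoidal (i.e.\ $\bbE_1$) case is Theorem A.5.4.3.8 rather than \S 5.4.4, the latter being what the paper invokes for the symmetric monoidal upgrade in \cref{fund thm of symm monoidal model infty-cats}.
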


\begin{proof}
Observe that the monoidal product preserves cofibrant objects.  Hence, the underlying non-unital monoidal structure on $\V$ restricts to one on $\V^c$.  Moreover, the structure maps for $\V^c \in \Alg^\nonu(\Cati)$ preserve weak equivalences by Kenny Brown's lemma (\ref{kenny brown}), so we obtain a natural lift to $\V^c \in \Alg^\nonu(\RelCati)$.

Now, the localization functor is symmetric monoidal by \cref{rnerves:localization preserves finite products}, so that we obtain $\loc{\V^c}{(\bW^c)} \in \Alg^\nonu(\Cati)$.  To see that this can in fact be canonically promoted to a unital monoidal structure, we use the guaranteed cofibrant replacement $\es_\V \cofibn \bbQ \unit_\V \we \unit_\V$.  First of all, by assumption, the resulting natural transformations $(\bbQ \unit_\V \otimes -) \ra (\unit_\V \otimes -)$ and $(- \otimes \bbQ \unit_\V) \ra (- \otimes \unit_\V)$ in $\Fun(\V,\V)$ restrict to natural weak equivalences in $\Fun(\V^c,\V)$.  As the unit object comes equipped with equivalences
\[ (\unit_\V \otimes -) \simeq \id_\V \simeq (- \otimes \unit_\V) , \]
it follows that the restrictions along $\V^c \subset \V$ of these functors all lie in the full subcategory
\[ \Fun(\V^c,\V^c)^\Rel \subset \Fun(\V^c,\V^c) \subset \Fun(\V^c,\V) ,\]
where they give rise to a diagram
\[ (\bbQ \unit_\V \otimes - ) \we (\unit_\V \otimes -) \simeq \id_{\V^c} \simeq (- \otimes \unit_\V) \lwe (- \otimes \bbQ \unit_\V ) \]
of natural weak equivalences.  Applying the canonical functor
\[ \Fun(\V^c,\V^c)^\Rel \ra \Fun(\loc{\V^c}{(\bW^c)} , \loc{\V^c}{(\bW^c)}) \]
of \cref{rnerves:map from rel fctr cat to fctrs betw localizns} then yields a diagram
\[ \left( \bbQ \unit_\V \overset{\bbL}{\otimes} - \right) \xra{\sim} \id_{\loc{\V^c}{(\bW^c)}} \xla{\sim} \left( - \overset{\bbL}{\otimes} \bbQ \unit_\V \right) \]
of natural equivalences.  Thus, the map $\pt_\Cati \xra{\bbQ \unit_\V} \loc{\V^c}{(\bW^c)}$ is a quasi-unit (in the sense of Definition A.5.4.3.5) for the non-unital monoidal $\infty$-category $\loc{\V^c}{(\bW^c)} \in \Alg^\nonu(\Cati)$.  It then follows from Theorem A.5.4.3.8 (and Propositions A.4.1.2.15 and A.5.4.3.2) that there exists a unique refinement $\loc{\V^c}{(\bW^c)} \in \Alg(\Cati)$ to a monoidal $\infty$-category.\footnote{Note that Definition A.5.4.3.5 only requires the \textit{existence} of a quasi-unit; the quasi-unit itself is not part of the data.}

The assertion is now clear: we have exhibited a canonical monoidal structure on $\loc{\V^c}{(\bW^c)} \simeq \loc{\V}{\bW}$ whose underlying monoidal product is precisely the left derived bifunctor of the original monoidal product on $\V$, and the derived bifunctors $\bbR \enrhom_l(-,-)$ and $\bbR \enrhom_r(-,-)$, being participants in the derived two-variable adjunction, have no choice but to define left and right internal hom-objects.
\end{proof}

We also have the following variant.

\begin{defn}\label{defn of symm monoidal model infty-cat}
Let $\V \in \CAlg(\Cati)$ be a closed symmetric monoidal $\infty$-category, and suppose that $\V$ is equipped with a model structure.  We say that these data make $\V$ into a \bit{symmetric monoidal model $\infty$-category} if they make the underlying closed monoidal $\infty$-category $\V \in \Alg(\Cati)$ into a monoidal model $\infty$-category.
\end{defn}

We then have the following corresponding result.

\begin{prop}\label{fund thm of symm monoidal model infty-cats}
Suppose that $\V$ is a symmetric monoidal model $\infty$-category.  Then the derived two-variable adjunction of its underlying two-variable Quillen adjunction itself underlies a canonical closed symmetric monoidal structure on its localization $\loc{\V}{\bW}$.
\end{prop}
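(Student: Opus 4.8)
The plan is to run the proof of \cref{fund thm of monoidal model infty-cats} one operadic level higher, replacing every associative (i.e.\ $\Alg$) structure by its commutative (i.e.\ $\CAlg$) counterpart, and then to check that the closed symmetric monoidal structure so obtained refines the closed monoidal structure already produced there.

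Concretely, I would first note --- exactly as in the proof of \cref{fund thm of monoidal model infty-cats} --- that the monoidal product of $\V$ preserves cofibrant objects, so that the symmetric monoidal structure on $\V \in \CAlg(\Cati)$ restricts to a non-unital symmetric monoidal structure $\V^c \in \CAlg^\nonu(\Cati)$ on the full subcategory of cofibrant objects; by Kenny Brown's lemma (\ref{kenny brown}) all of its structure maps preserve weak equivalences, so this lifts canonically to $\V^c \in \CAlg^\nonu(\RelCati)$. Since the localization functor $\RelCati \xra{\locL} \Cati$ is symmetric monoidal for the cartesian symmetric monoidal structures (\cref{rnerves:localization preserves finite products}), it carries non-unital commutative algebra objects to non-unital commutative algebra objects, yielding $\loc{\V^c}{(\bW^c)} \in \CAlg^\nonu(\Cati)$. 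The guaranteed cofibrant replacement $\es_\V \cofibn \bbQ\unit_\V \we \unit_\V$ provides, by the very computation carried out in the proof of \cref{fund thm of monoidal model infty-cats}, a diagram
\[ \left( \bbQ \unit_\V \overset{\bbL}{\otimes} - \right) \xra{\sim} \id_{\loc{\V^c}{(\bW^c)}} \xla{\sim} \left( - \overset{\bbL}{\otimes} \bbQ \unit_\V \right) \]
of natural equivalences, exhibiting $\pt_\Cati \xra{\bbQ\unit_\V} \loc{\V^c}{(\bW^c)}$ as a quasi-unit; being a quasi-unit is a condition on the underlying non-unital monoidal $\infty$-category rather than additional structure, so it is equally a quasi-unit for the non-unital \emph{symmetric} monoidal $\infty$-category $\loc{\V^c}{(\bW^c)} \in \CAlg^\nonu(\Cati)$. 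The symmetric monoidal analog of Theorem A.5.4.3.8 then supplies a unique refinement $\loc{\V^c}{(\bW^c)} \in \CAlg(\Cati)$ to a symmetric monoidal $\infty$-category, which we transport across the equivalence $\loc{\V^c}{(\bW^c)} \simeq \loc{\V}{\bW}$ (as in the proof of \cref{fund thm of monoidal model infty-cats}).

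It remains to identify this structure with the promised one. Because the forgetful functor $\CAlg \to \Alg$ commutes with restriction to cofibrant objects, with the lift to $\RelCati$, with $\locL$, and --- by the uniqueness of the unital refinement invoked in \cref{fund thm of monoidal model infty-cats} --- with passing to underlying objects of the unital refinement, the underlying monoidal structure of $\loc{\V}{\bW} \in \CAlg(\Cati)$ is precisely the closed monoidal structure of \cref{fund thm of monoidal model infty-cats}; in particular its monoidal product is the left derived bifunctor $\overset{\bbL}{\otimes}$ and it is closed, with internal homs $\bbR\enrhom_l$ and $\bbR\enrhom_r$ coming from the derived two-variable adjunction of \cref{two-var adjunction thm}. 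Finally, the symmetry of the symmetric monoidal structure on $\loc{\V}{\bW}$ identifies $\bbR\enrhom_l$ with $\bbR\enrhom_r$, so that $\loc{\V}{\bW}$ is a closed symmetric monoidal $\infty$-category with a single derived internal hom, as claimed. I expect the main obstacle to be purely one of operadic bookkeeping: namely checking that Lurie's quasi-unit and unique-unital-refinement results (Theorem A.5.4.3.8 and its neighbors) do have the required symmetric monoidal analogs, and that all the relevant forgetful functors --- $\CAlg^\nonu \to \Alg^\nonu$, unital $\to$ non-unital, and $\RelCati \to \Cati$ --- commute with one another and with localization, so that the symmetric monoidal structure we build genuinely refines, rather than merely happens to resemble, the monoidal structure of \cref{fund thm of monoidal model infty-cats}.
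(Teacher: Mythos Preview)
Your proposal is correct and follows essentially the same approach as the paper: restrict the non-unital $\CAlg$ structure to $\V^c$, lift to $\RelCati$, localize, and then invoke the commutative analog of Theorem A.5.4.3.8 to obtain the unital refinement. The paper cites this commutative analog precisely as Corollary A.5.4.4.7, which resolves the ``operadic bookkeeping'' obstacle you anticipated; otherwise the arguments are the same, with the paper simply deferring the closedness and compatibility claims to \cref{fund thm of monoidal model infty-cats} rather than spelling them out again.
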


\begin{proof}
In light of \cref{fund thm of monoidal model infty-cats}, it only remains to show that the symmetric monoidal structure on $\V$ descends canonically to one on $\loc{\V}{\bW}$ (extending its monoidal structure).  Just as in the proof of that result, the underlying datum $\V \in \CAlg^\nonu(\Cati)$ restricts to give $\V^c \in \CAlg^\nonu(\Cati)$, which admits a natural lift $\V^c \in \CAlg^\nonu(\RelCati)$, and then the fact that the localization functor is symmetric monoidal yields $\loc{\V^c}{(\bW^c)} \in \CAlg^\nonu(\Cati)$.  The existence of a canonical lift $\loc{\V^c}{(\bW^c)} \in \CAlg(\Cati)$ now follows from Corollary A.5.4.4.7.
\end{proof}

\begin{rem}\label{lurie proves mon or symm mon str for cofibt unit}
In the special case that our (resp.\! symmetric) monoidal model $\infty$-category $\V$ has that its unit object is cofibrant, then its localization $\loc{\V}{\bW}$ obtains a canonical (resp.\! symmetric) monoidal structure by Proposition A.4.1.3.4.  However, this result does not alone guarantee a \textit{closed} (resp.\! symmetric) monoidal structure, as does \cref{fund thm of monoidal model infty-cats} (resp.\! \cref{fund thm of symm monoidal model infty-cats}).
\end{rem}

\begin{rem}
Though they presumably exist, we do not pursue any notions of ``$\O$-monoidal model $\infty$-category'' for other $\infty$-operads $\O$ here.
\end{rem}

\begin{rem}
In Definitions \ref{defn of monoidal model infty-cat} \and \ref{defn of symm monoidal model infty-cat}, one could remove the requirement that there exist a suitable cofibrant replacement of the unit object (or even that there exist a unit object at all); then, Propositions \ref{fund thm of monoidal model infty-cats} \and \ref{fund thm of symm monoidal model infty-cats} would admit non-unital variants.
\end{rem}

\section{Enriched model $\infty$-categories}\label{section enr model infty-cats}

In this final section, we show that the localization of a model $\infty$-category that is compatibly \textit{enriched and bitensored} over a closed monoidal model $\infty$-category is itself enriched and bitensored over the localization of the enriching model $\infty$-category.  For a more leisurely discussion of monoidal and symmetric monoidal model categories, we yet again refer the reader to \cite[\sec 4.2]{HoveyModelCats} (beginning with \cite[Definition 4.2.18]{HoveyModelCats}).

\begin{defn}\label{defn enr model str}
Let $\V \in \Alg(\Cati)$ be a monoidal model $\infty$-category, let $\M \in \RMod_\V(\Cati)$ be a right $\V$-module (with respect to its underlying monoidal $\infty$-category structure) whose underlying action bifunctor extends to a two-variable adjunction
\[ \twovaradjVM , \]
and suppose that $\M$ is equipped with a model structure.  We say that these these data make $\M$ into a \bit{$\V$-enriched model $\infty$-category} (or simply a \bit{$\V$ model $\infty$-category}) if they satisfy the following evident $\infty$-categorical analogs of the usual axioms for an enriched model category.
\begin{enumeratesmall}
\item[\enrppaxiom] \textit{(pushout product)} The above two-variable adjunction is a two-variable Quillen adjunction.
\item[\enrunitaxiom] \textit{(unit)} There exists a cofibrant replacement $\es_\V \wcofibn \bbQ \unit_\V \we \unit_\V$ such that the functor
\[ \M \xra{- \tensoring (\bbQ \unit_\V \ra \unit_\V)} \Fun([1],\M) \]
takes cofibrant objects to weak equivalences.
\end{enumeratesmall}
We use the same terminology in the case that $\V \in \CAlg(\Cati)$ is in fact a symmetric monoidal model $\infty$-category.
\end{defn}

\begin{defn}\label{defn sspatial model str}
As a special case of \cref{defn enr model str}, we refer to a $s\S_\KQ$-enriched model $\infty$-category as a \bit{simplicio-spatial model $\infty$-category} (recall \cref{ex KQ is a monoidal model str}).
\end{defn}

\begin{ex}
Given a model $\infty$-category $\M$, the resolution model $\infty$-category $s\M_\res$ (see \cref{sspaces:ex resolution model str}) is simplicio-spatial, much as the classical resolution model structure is simplicial (see \cite[3.1 and 5.3]{DKS-E2}).
\end{ex}

\begin{ex}
If $\C_\triv$ is an $\infty$-category equipped with the trivial model structure (see \cref{sspaces:trivial model structure}) and the underlying $\infty$-category $\C$ is bitensored, then $\C_\triv$ can be considered as a simplicio-spatial model $\infty$-category in which
\begin{itemizesmall}
\item the co/tensoring over $s\S$ is obtained by precomposition with $|{-}| : s\S \ra \S$, and
\item the internal hom is obtained by postcomposition with $\const : \S \hookra s\S$.
\end{itemizesmall}
\end{ex}

\begin{ex}
If $\M$ is a simplicial model category (i.e.\! a $s\Set_\KQ$-enriched model category), then $\M$ can also be considered as a simplicio-spatial model $\infty$-category in which
\begin{itemizesmall}
\item the co/tensoring over $s\S$ is obtained by precomposition with $\pi_0^\lw : s\S \ra s\Set$, and
\item the internal hom is obtained by postcomposition with $\disc^\lw : s\Set \hookra s\S$.
\end{itemizesmall}
\end{ex}

\begin{ex}
if $\M$ is a simplicio-spatial model $\infty$-category, then the levelwise action $s\M \tensoring^\lw s\S \ra s\M$ given by $(x_\bullet \tensoring Y)_n = x_n \tensoring Y$ makes $s\M_\Reedy$ into a simplicio-spatial model $\infty$-category.
\end{ex}

We now show that the structure of an enriched model $\infty$-category descends to localizations as claimed.

\begin{prop}\label{fund thm of enriched model infty-cats}
Suppose that $\M$ is a $\V$-enriched model $\infty$-category.  Then the derived two-variable adjunction of its underlying two-variable Quillen adjunction itself underlies a canonical enrichment and bitensoring of $\loc{\M}{\bW_\M}$ over $\loc{\V}{\bW_\V}$.
\end{prop}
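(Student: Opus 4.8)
The plan is to mimic the proof of \cref{fund thm of monoidal model infty-cats}, with \cref{two-var adjunction thm} supplying the bitensoring (and the two-variable adjunction formalism). First I would note that, by \cref{rem identify maps as pushout products}, the pushout product axiom {\enrppaxiom} forces the action bifunctor $\M \tensoring \V \ra \M$ to preserve cofibrant objects; hence the underlying non-unital right $\V$-module structure on $\M$ restricts to a non-unital right $\V^c$-module structure on $\M^c$, i.e.\ it gives $\M^c \in \RMod^\nonu_{\V^c}(\Cati)$ over $\V^c \in \Alg^\nonu(\Cati)$. By the same token together with Kenny Brown's lemma (\ref{kenny brown}), all of the structure maps of this non-unital module (the restricted action bifunctor and the associativity coherences) preserve weak equivalences, so I would promote this to a lift $\M^c \in \RMod^\nonu_{\V^c}(\RelCati)$. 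Applying $\RelCati \xra{\locL} \Cati$ --- which is symmetric monoidal by \cref{rnerves:localization preserves finite products}, hence sends algebras to algebras and modules to modules --- then produces $\loc{\M^c}{(\bW_\M^c)} \in \RMod^\nonu_{\loc{\V^c}{(\bW_\V^c)}}(\Cati)$ over the non-unital monoidal $\infty$-category underlying the canonical monoidal $\infty$-category $\loc{\V^c}{(\bW_\V^c)} \in \Alg(\Cati)$ constructed in the proof of \cref{fund thm of monoidal model infty-cats}.

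The main obstacle is then promoting this to a genuine \emph{unital} module structure, and here I would rerun the quasi-unit argument of the monoidal case. Using the cofibrant replacement $\es_\V \wcofibn \bbQ \unit_\V \we \unit_\V$, the unit axiom {\enrunitaxiom} says that the natural transformation $(- \tensoring \bbQ \unit_\V) \ra (- \tensoring \unit_\V)$, restricted along $\M^c \subset \M$, is a natural weak equivalence; since $(- \tensoring \unit_\V) \simeq \id_\M$, these restrictions land in $\Fun(\M^c,\M^c)^\Rel$ and assemble into a diagram $(- \tensoring \bbQ \unit_\V) \we (- \tensoring \unit_\V) \simeq \id_{\M^c}$ of natural weak equivalences, so that the functor of \cref{rnerves:map from rel fctr cat to fctrs betw localizns} exhibits the action of the quasi-unit $\bbQ \unit_\V$ on $\loc{\M^c}{(\bW_\M^c)}$ as naturally equivalent to the identity. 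To finish I would package the pair $(\V^c,\M^c)$ as a single non-unital algebra over the two-colored $\infty$-operad whose algebras are an associative algebra together with a right module over it; the previous sentence says exactly that this algebra is quasi-unital, and the counterpart for this $\infty$-operad of Theorem A.5.4.3.8 (together with Propositions A.4.1.2.15 and A.5.4.3.2, as in the monoidal proof) then yields a unique refinement to a genuine right $\loc{\V^c}{(\bW_\V^c)}$-module structure on $\loc{\M^c}{(\bW_\M^c)} \simeq \loc{\M}{\bW_\M}$. The point deserving care is that Lurie's quasi-unital rectification is stated for associative (and commutative) algebras, so one must check that its proof applies in this colored-operad setting --- which it does, since Theorem A.5.4.3.8 is established at a sufficiently operadic level of generality; alternatively, when $\unit_\V$ is itself cofibrant the axiom {\enrunitaxiom} is vacuous and the module structure on $\M^c$ is honestly unital from the outset.

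Finally I would identify this module structure with the derived two-variable adjunction. Unwinding the constructions, the underlying action bifunctor just produced agrees with the derived bifunctor $\overset{\bbL}{\tensoring}$ of the two-variable Quillen adjunction of \cref{two-var adjunction thm} --- both being obtained by applying $\locL$ to $\M^c \times \V^c \hookra \M \times \V \xra{\tensoring} \M$ --- so it is one of the three participants in the derived two-variable adjunction. The remaining two, namely $\bbR \enrhom_l \colon (\loc{\M}{\bW_\M})^{op} \times \loc{\M}{\bW_\M} \ra \loc{\V}{\bW_\V}$ and $\bbR \enrhom_r \colon (\loc{\V}{\bW_\V})^{op} \times \loc{\M}{\bW_\M} \ra \loc{\M}{\bW_\M}$, then have no choice but to furnish the enrichment hom-objects and the cotensoring compatible with the derived action, which is precisely the asserted canonical enrichment and bitensoring of $\loc{\M}{\bW_\M}$ over $\loc{\V}{\bW_\V}$.
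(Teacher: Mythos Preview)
Your proposal is correct and follows essentially the same route as the paper: restrict the non-unital module structure to cofibrant objects, lift to $\RelCati$ via Kenny Brown's lemma, localize, and then rectify from quasi-unital to unital using the cofibrant replacement of $\unit_\V$. The only difference is in the rectification step: the ``counterpart for this $\infty$-operad of Theorem A.5.4.3.8'' that you describe is exactly (the dual of) Proposition A.5.4.3.16, which handles the $\mathcal{LM}$-operad case directly---this is what the paper cites, so you need not argue separately that the proof of A.5.4.3.8 extends to the two-colored setting.
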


\begin{proof}
The proof is almost identical to that of \cref{fund thm of monoidal model infty-cats}, only now we replace the appeal to Theorem A.5.4.3.8 with an appeal to (the dual of) Proposition A.5.4.3.16.
\end{proof}

\begin{rem}
Let $\M$ be a simplicio-spatial model $\infty$-category.  As being bitensored over $\S$ is actually a \textit{condition} (rather than additional structure), it follows that the derived bitensoring over $\loc{s\S}{\bW_\KQ} \simeq \S$ of $\loc{\M}{\bW}$ guaranteed by \cref{fund thm of enriched model infty-cats} must indeed be a bitensoring in the usual sense.
\end{rem}

\bibliographystyle{amsalpha}
\bibliography{qadjns}{}

\providecommand{\bysame}{\leavevmode\hbox to3em{\hrulefill}\thinspace}
\providecommand{\MR}{\relax\ifhmode\unskip\space\fi MR }
\providecommand{\MRhref}[2]{%
  \href{http://www.ams.org/mathscinet-getitem?mr=#1}{#2}
}
\providecommand{\href}[2]{#2}
\begin{thebibliography}{DHKS04}

\bibitem[BHH]{BHH}
Ilan Barnea, Yonatan Harpaz, and Geoffroy Horel, \emph{{Pro-categories in
  homotopy theory}}, available at {\tt arXiv:1507.01564}, v1.

\bibitem[Bro71]{BrownKSthesis}
Kenneth~Stephen Brown, \emph{Abstract homotopy theory and generalized sheaf
  cohomology}, ProQuest LLC, Ann Arbor, MI, 1971, Thesis (Ph.D.)--Massachusetts
  Institute of Technology.

\bibitem[Cis10a]{CisCatDer}
Denis-Charles Cisinski, \emph{Cat\'egories d\'erivables}, Bull. Soc. Math.
  France \textbf{138} (2010), no.~3, 317--393.

\bibitem[Cis10b]{CisInv}
\bysame, \emph{Invariance de la {$K$}-th\'eorie par \'equivalences
  d\'eriv\'ees}, J. K-Theory \textbf{6} (2010), no.~3, 505--546.

\bibitem[DHKS04]{DHKS}
William~G. Dwyer, Philip~S. Hirschhorn, Daniel~M. Kan, and Jeffrey~H. Smith,
  \emph{Homotopy limit functors on model categories and homotopical
  categories}, Mathematical Surveys and Monographs, vol. 113, American
  Mathematical Society, Providence, RI, 2004.

\bibitem[DKS93]{DKS-E2}
W.~G. Dwyer, D.~M. Kan, and C.~R. Stover, \emph{An {$E^2$} model category
  structure for pointed simplicial spaces}, J. Pure Appl. Algebra \textbf{90}
  (1993), no.~2, 137--152.

\bibitem[GHN]{GHN}
David Gepner, Rune Haugseng, and Thomas Nikolaus, \emph{{Lax colimits and free
  fibrations in $\infty$-categories}}, available at {\tt arXiv:1501.02161}, v2.

\bibitem[Hir03]{Hirsch}
Philip~S. Hirschhorn, \emph{Model categories and their localizations},
  Mathematical Surveys and Monographs, vol.~99, American Mathematical Society,
  Providence, RI, 2003.

\bibitem[Hov99]{HoveyModelCats}
Mark Hovey, \emph{Model categories}, Mathematical Surveys and Monographs,
  vol.~63, American Mathematical Society, Providence, RI, 1999.

\bibitem[Lur09]{LurieHTT}
Jacob Lurie, \emph{Higher topos theory}, Annals of Mathematics Studies, vol.
  170, Princeton University Press, Princeton, NJ, 2009, also available at {\tt
  http://math.harvard.edu/{$\thicksim$lurie}} and at {\tt arXiv:math/0608040},
  v4.

\bibitem[Lur14]{LurieHA}
\bysame, \emph{Higher algebra}, available at {\tt
  http://www.math.harvard.edu/{$\thicksim$lurie}}, version dated September 14,
  2014.

\bibitem[MGa]{MIC-sspaces}
\emph{{\textup{{A}aron {M}azel-{G}ee,} {\sspacestitle}}}, available at {\tt
  arXiv:1412.8411}, v2.

\bibitem[MGb]{MIC-rnerves}
\emph{{\iftoggle{sspacespaper}{\textup{{A}aron {M}azel-{G}ee,}}{\bysame,}
  {\rnervestitle}}}, available at {\tt arXiv:1510.03150}, v1.

\bibitem[MGc]{MIC-gr}
\emph{{\bysame, {\grtitle}}}, available at {\tt arXiv:1510.03525}, v1.

\bibitem[MGd]{MIC-hammocks}
\emph{{\bysame, {\hammockstitle}}}, available at {\tt arXiv:1510.03961}, v1.

\bibitem[MGf]{MIC-fundthm}
\emph{{\bysame, {\fundthmtitle}}}, to appear.

\bibitem[MGq]{adjns}
\emph{{\bysame, {Quillen adjunctions induce adjunctions of quasicategories}}},
  available at {\tt arXiv:1501.03146}, v1.

\bibitem[Shu]{Shulman-hocolims}
Michael Shulman, \emph{{Homotopy limits and colimits and enriched homotopy
  theory}}, available at {\tt arXiv:0610194}, v3.

\end{thebibliography}

\end{document}